\title{\bf Generalized Bessel and Frame Measures}
\author{Fariba Zeinal Zadeh Farhadi$^{1}$, Mohammad Sadegh Asgari$^{2}$,\\ 
Mohammad Reza Mardanbeigi$^{3^*}$ and Mahdi Azhini$^{4}$}
\date{}
\theoremstyle{definition}
\newtheorem{theorem}{Theorem}[section]
\newtheorem{remark}[theorem]{Remark}
\newtheorem{lemma}[theorem]{Lemma}
\newtheorem{proposition}[theorem]{Proposition}
\newtheorem{corollary}[theorem]{Corollary}
\newtheorem{definition}[theorem]{Definition}
\newtheorem{example}[theorem]{Example}
\newcommand{\subject}[1]{\begin{flushleft}
\textbf{2010 AMS Subject Classification}: #1\end{flushleft}}
\newcommand{\keyword}[1]{\par\noindent \textbf{Keywords:} #1}
\newcommand{\eval}[2][\right]{\relax
\ifx#1\right\relax \left.\fi#2#1\rvert}
\renewcommand{\sectionmark}[1]{}
\begin{document}
\maketitle
\begin{abstract}
\noindent
Considering a finite Borel measure $ \mu $ on $ \mathbb{R}^d $, a pair of conjugate exponents $ p, q $, and a compatible semi-inner product on $ L^p(\mu) $, we introduce $ (p,q) $-Bessel and $ (p,q) $-frame measures as a generalization of the concepts of Bessel and frame measures. In addition, we define notions of $ q $-Bessel and $ q$-frame in the semi-inner product space $ L^p(\mu) $. Every finite Borel measure $\nu$ is a $(p,q)$-Bessel measure for a finite measure $ \mu $. We construct a large number of examples of finite measures $ \mu $ which admit infinite $ (p,q) $-Bessel measures $ \nu $. We show that if $ \nu $ is a $ (p,q) $-Bessel/frame measure for $ \mu $, then $ \nu $ is $ \sigma $-finite and it is not unique. In fact, by using convolutions of probability measures, one can obtain other $ (p,q) $-Bessel/frame measures for $ \mu $. We present a general way of constructing a $ (p,q) $-Bessel/frame measure for a given measure.
\vspace{.3cm}
\keyword{Fourier frame, Plancherel theorem, spectral measure, frame measure, Bessel measure, semi-inner product.}
\subject{Primary 28A99, 46E30, 42C15.}
\end{abstract}

\section{Introduction}
According to \cite{5}, a Borel measure $ \nu $ on $ \mathbb{R}^d $ is called a dual measure for a given measure $ \mu $ on $ \mathbb{R}^d $ if for every
$ f\in L^2(\mu) $,
\begin{equation}\label{11}
\int_{\mathbb{R}^d} |\widehat{f d\mu}(t)|^2 d\nu(t) \simeq \int_{\mathbb{R}^d} |f(x)|^2 d\mu(x),
\end{equation}
where for a function $ f\in L^1(\mu) $ the Fourier transform is given by
\begin{equation*}
\widehat{f d\mu}(t) = \int_{\mathbb{R}^d} f(x) e^{-2\pi it\cdot x} d\mu(x) \qquad (t\in \mathbb{R}^d).
\end{equation*}
Precisely, the equivalence in Equation (\ref{11}) means that there are positive constants $A$ and $B$
independent of the function $f(x)$ such that
\begin{equation*}
A\int_{\mathbb{R}^d} |f(x)|^2 d\mu(x) \leq \int_{\mathbb{R}^d} |\widehat{f d\mu}(t)|^2 d\nu(t) \leq B\int_{\mathbb{R}^d} |f(x)|^2 d\mu(x).
\end{equation*}
Therefore when $ A=B=1 $, by Plancherel's theorem for Lebesgue measure $ \lambda $ on $ \mathbb{R}^d $, $ \lambda $ is a dual measure to itself. Dual measures are in fact a generalization of the concept of Fourier frames and they are also called frame measures. According to \cite{5}, if $ \mu $ is not an $ F $-spectral measure, then there cannot be any general statement about the existence of frame measures $ \nu $. Nevertheless, the authors showed that if one frame measure exists, then by using convolutions of measures, many frame measures can be obtained, especially a frame measure which is absolutely continuous with respect to Lebesgue measure. Moreover, they presented a general way of constructing Bessel/frame measures for a given measure.

In this paper we generalize the notion of Bessel/frame measure from Hilbert spaces $ L^2(\mu) $, $ L^2(\nu) $ to Banach spaces $ L^p(\mu) $, $ L^q(\nu) $ ($ p, q $ are conjugate exponents) via a compatible semi-inner product defined on $ L^p(\mu) $. Compatible semi-inner products are natural substitutes for inner products on Hilbert spaces. We introduce $ (p,q) $-Bessel and $ (p,q) $-frame measures, and we define notions of $ q $-Bessel and $ q $-frame in the semi-inner product space $ L^p(\mu) $. Then we investigate the existence and some general properties of them. 

The rest of this paper is organized as follows: In section $ 2 $ basic definitions and preliminaries are given. In section $ 3 $ we investigate the existence of $ (p,q) $-Bessel/frame measures. We show that every finite Borel measure $\nu$ is a $(p,q)$-Bessel measure for a finite measure $ \mu $. In addition, we construct a large number of examples of measures which admit infinite discrete $ (p,q) $-Bessel measures, by F-spectral measures and applying the Riesz-Thorin interpolation theorem. In general, for every spectral measure (B-spectral measure, or F-spectral measure respectively) $ \mu $, there exists a discrete measure $\nu = \sum_{\lambda\in\Lambda_\mu}\delta_\lambda $ which is a Plancherel measure (Bessel measure or frame measure respectively) for $ \mu $. Then the Riesz-Thorin interpolation theorem yields that $ \nu $ is also a $ (p,q) $-Bessel measure for $ \mu $, where $ 1 \leq p \leq 2 $ and $ q $ is the conjugate exponent to $ p $. Moreover, this shows that if $ \mu $ is a spectral measure (B-spectral measure, or F-spectral measure), then the set $ \{e_\lambda\}_{\lambda\in\Lambda_\mu}$ forms a $ q $-Bessel for $ L^p(\mu) $. It is known \cite{13, 19} that if a measure $ \mu $ is an F-spectral measure, then it must be of pure type, i.e., $ \mu $ is either discrete, absolutely continuous or singular continuous. Therefore, we consider such measures in constructing the examples. The interested reader can refer to \cite{3, 6, 7, 9, 13, 16, 18, 19, 20, 21, 23, 24} to see examples and properties of spectral measures (B-spectral measures, or F-spectral measures) and related concepts. Besides discrete $ (p,q) $-Bessel measures $\nu = \sum_{\lambda\in\Lambda_\mu}\delta_\lambda $ associated to spectral measures (B-spectral measures, or F-spectral measures) $ \mu $, we prove that there exists an infinite absolutely continuous $ (p,q) $-Bessel measure $ \nu $ for a special finite measure $ \mu $. We show that if $ \nu $ is a $ (p_1,q_1) $-Bessel/frame measure and $ (p_2,q_2) $-Bessel/frame measure for $ \mu $, where $1 \leq p_1, p_2 < \infty$ and $ q_1, q_2  $ are the conjugate exponents to $ p_1, p_2 $, respectively, then $ \nu $ is a $ (p,q) $-Bessel measure for $ \mu $ too, where $ p_1 < p < p_2 $ and $ q $ is the conjugate exponent to $ p $. Consequently, if $ \nu $ is a Bessel/frame measure for $ \mu $, then it is a $ (p,q) $-Bessel measure for $ \mu $ too. In Proposition \ref{3.20} we prove that there exists a measure $ \mu $ which admits tight $ (p,q) $-frame measures and $ (p,q) $-Plancherel measures.\\ 
Section $ 4 $ is devoted to investigating properties of $ (p,q) $-Bessel/frame measures based on the results by Dutkay, Han, and Weber from \cite{5}. 
\section{ Preliminaries}
\begin{definition}
Let $ t\in \mathbb{R}^d $. Denote by $ e_t $ the exponential function
\begin{equation*}
e_t(x) = e^{2\pi it\cdot x}\qquad (x\in \mathbb{R}^d). 
\end{equation*}
\end{definition}
\begin{definition}
Let $ H $ be a Hilbert space. A sequence $ \{f_i \}_{i \in I} $ of elements in $ H $ is called a \emph{Bessel sequence} for $ H $ if there exists a positive constant $ B $ such that for all $ f \in H $,
\begin{equation*}
\sum_{i \in I }|\left<f, f_i\right>|^2 \leq B\| f\|^2.  
\end{equation*}
Here $ B $ is called the \emph{Bessel bound} for the Bessel sequence $ \{ f_i \}_{i\in I} $.
 
The sequence $ \{f_i \}_{i \in I} $ is called a \emph{frame} for $ H $, if there exist constants $ A, B > 0 $  such that for all $ f \in H $,
\begin{equation*}
 A\| f\|^2 \leq \sum_{i \in I }|\left<f, f_i\right>|^2 \leq B\| f\|^2.  
\end{equation*}
In this case, $ A $ and $ B $ are called \emph{frame bounds}.
\end{definition} 
Frames are a natural generalization of orthonormal bases. It is easily seen from the lower bound that a frame is complete in H, so every $ f $ can
be expressed using (infinite) linear combination of the elements $ f_i $ in the frame \cite{2}. 
\begin{definition}
Let $ \mu $ be a compactly supported probability measure on $ \mathbb{R}^d $ and $\Lambda$ be a countable set in $ \mathbb{R}^d $, the set $ E(\Lambda)=\{e_\lambda : \lambda \in \Lambda \} $ is called a \emph{Fourier frame} for $ L^2(\mu) $ if for all $ f\in L^2(\mu) $, 
\begin{equation*}
A\| f\|_{L^2(\mu)}^2 \leq \sum_{\lambda \in \Lambda }|\left<f, e_\lambda \right>_{L^2(\mu)}|^2 \leq B\| f\|_{L^2(\mu)}^2. 
\end{equation*}

When $ E(\Lambda) $ is an orthonormal basis (Bessel sequence, or frame) for $ L^2(\mu) $, we say that $ \mu $ is a \emph{spectral measure (B-spectral measure, or F-spectral measure} respectively) and $ \Lambda $ is called a \emph{spectrum (B-spectrum, or F-spectrum} respectively) for $ \mu $.
\end{definition}
We give the following definition from \cite{5}, assuming that the given measure $ \mu $ is a finite Borel measure on $ \mathbb{R}^d $.
\begin{definition}[\cite{5}]
Let $ \mu $ be a finite Borel measure on $ \mathbb{R}^d $. A Borel measure $ \nu $ is called a \emph{Bessel measure} for $ \mu $, if there exists a positive constant $ B $ such that for every $ f\in L^2(\mu) $, 
\begin{equation*}
 \|\widehat{fd\mu}\|_{L^2(\nu)}^2 \leq B\| f\|_{L^2(\mu)}^2.
\end{equation*}
Here $ B $ is called a \emph{(Bessel) bound} for $ \nu $.
 
The measure $ \nu $ is called a \emph{frame measure} for $ \mu $ if there exist positive constants $ A, B $ such that for every $ f\in L^2(\mu) $,
\begin{equation*}
A\| f\|_{L^2(\mu)}^2 \leq \|\widehat{fd\mu}\|_{L^2(\nu)}^2 \leq B\| f\|_{L^2(\mu)}^2.
\end{equation*}
In this case, $ A $ and $ B $ are called \emph{(frame) bounds} for $ \nu $. The measure $ \nu $ is called a tight frame measure if $ A = B $ and Plancherel measure if $ A = B =1 $ (see also \cite{8}).

The set of all Bessel measures for $ \mu $ with fixed bound $ B $ is denoted by $ \mathcal{B}_B(\mu) $ and the set of all frame measures for $ \mu $ with fixed bounds $ A, B $ is denoted by $ \mathcal{F}_{A,B}(\mu) $.
\end{definition}
\begin{remark} \label{2.5}
A compactly supported probability measure $ \mu $ is an F-spectral measure if and only if there exists a countable set $\Lambda $ in $ \mathbb{R}^d $ such that $ \nu=\sum_{\lambda \in \Lambda}\delta_\lambda $ is a frame measure for $ \mu $.
\end{remark}
\begin{definition}
 A finite set of contraction mappings $ \{\tau_i\}_{i=1}^n $ on a complete metric space is called an \emph{iterated function system (IFS)}. Hutchinson \cite{15} proved that, for the metric space $ \mathbb{R}^d $, there exists a unique compact subset $ X $ of $ \mathbb{R}^d $, which satisfies $ X=\bigcup_{i=1}^n \tau_i(X) $. Moreover, if the IFS is associated with a set of probability weights $ \{\rho_i\}_{i= 1}^n $ (i.e., $ 0 <\rho_i < 1 $,  $ \sum_{i=1}^n \rho_i =1 $), then there exists a unique Borel probability measure $ \mu $ supported on $ X $ such that $ \mu=\sum_{i=1}^n \rho_i (\mu o\tau^{-1}) $. The corresponding $ X $ and $ \mu $ are called the \emph{attractor} and the \emph{invariant measure} of the IFS, respectively. It is well known that the invariant measure is either absolutely continuous or singular continuous with respect to Lebesgue measure.
In an affine IFS each $ \tau_i $ is affine and represented by a matrix. If $R$ is a $ d \times d $ expanding integer matrix (i.e., all eigenvalues $ \lambda $
satisfy $ |\lambda|>1 $), and $\mathcal{A} \subset \mathbb{Z}^d $, with $ \#\mathcal{A} =: N \geq 2 $, then the following set (associated with a set of probability weights) is an affine iterated function system.
\begin{equation*}
\tau_a (x) = R^{-1}(x + a) \quad (x \in \mathbb{R}^d, a \in \mathcal{A}).
\end{equation*}
Since $R$ is expanding, the maps $\tau_a$ are contractions (in an appropriate metric
equivalent to the Euclidean one). In some cases, the invariant measure $ \mu_\mathcal{A} $ is a \emph{fractal measure} (see \cite{3}). For example singular continuous invariant measures supported on Cantor type sets are fractal measures (see \cite{15, 14}). 
\end{definition} 
\begin{definition}[\cite{22}](\emph{Semi-inner product spaces}) \\
Let $ X $ be a vector space over the filed $ F $ of complex (real) numbers. If a function $ [\cdot , \cdot] : X \times X \rightarrow F $ satisfies the following properties:
 \begin{enumerate}
\item[1.] $[x + y, z] = [x, z] + [y, z], \;\;\ \text{for}\ x, y, z \in X;$
\item[2.] $[\lambda x, y] = \lambda[x, y], \;\ \text{for} \ \lambda\in F \ \text{and} \ x, y \in X;$
\item[3.] $[x, x] > 0, \;\ \text{for} \ x \neq 0;$
\item[4.] $|[x, y]|^2 \leq [x, x][y, y],$
\end{enumerate}
then $ [\cdot , \cdot] $ is called a \emph{semi-inner product} and the pair$ (X, [\cdot, \cdot]) $ is called a \emph{semi-inner product space}. It is easy to observe that $\|x\| = [x, x]^\frac{1}{2}$ is a norm on $ X $. So every semi-inner product space is a normed linear space. On the other hand, one can generate a semi-inner product in a normed linear space, in infinitely many different ways. 
\end{definition}
As a matter of fact, semi-inner products provide the possibility of carrying over Hilbert space type arguments to Banach spaces.

Every Banach space has a semi-inner product that is compatible. For example consider the Banach function space $ L^p(X , \mu),\; p \geq 1$, a compatible semi-inner product on this space is defined by (see \cite{12})
\begin{equation*}
[f,g]_{L^p(\mu)} := \dfrac{1}{\| g\|_{L^p(\mu)}^{p-2}}\int_X f(x) |g(x)|^{p-1}\overline{sgn(g(x))} d\mu(x),
\end{equation*} 
for every $ f, g \in L^p(X , \mu) $ with $ \|g\|_{L^p(\mu)} \neq 0 $, and $ [f,g]_{L^p(\mu)} = 0 $ for $ \|g\|_{L^p(\mu)} = 0 $.\\
  
To construct frames in a Hilbert space $ H $ the sequence space $ l^2 $ is required. Similarly, to construct frames in a Banach space $ X $ one needs a Banach space of scaler valued sequences $ X_d $ (in fact a BK-space $ X_d $, see \cite{1} and the references therein). According to Zhang and Zhang \cite{26} frames in Banach spaces can be defined via a compatible semi-inner product in the following way: 
\begin{definition}\label{2.7}
Let $ X $ be a Banach space with a compatible semi-inner product $ [\cdot , \cdot] $ and norm $ \|\cdot\|_X $. Let $ X_d $ be an associated BK-space with norm  $ \|\cdot\|_{X_d} $. A sequence of elements $ \{ f_ i \}_{i\in I} \subseteq X $ is called an \emph{$ X_d $-frame} for $ X $ if $ \{[ f, f_ i ]\}_{i\in I} \in X_d $ for all $ f \in X $, and there exist constants $ A, B > 0 $ such that for every $ f \in X $,
\begin{equation*}
 A\| f\|_X \leq \|\{[f,f_i]\}_{i\in I} \|_{X_d} \leq B\| f\|_X.
\end{equation*}
See also \cite{25}.
\end{definition} 
Based on Definition \ref{2.7}, we present the next definition. We consider the function space $ L^p( \mu)$ and the sequence space $ l^q(I) $ (where $ p>1 $ and $ q $ is the conjugate exponent to $ p $) as the Banach space and the BK- space, respectively.  
\begin{definition}
Suppose that $ 1<p, q<\infty $ and $ \dfrac{1}{p} + \dfrac{1}{q} =1 $. Let $ \mu  $ be a finite Borel measure on $ \mathbb{R}^d $ and let $ [\cdot ,\cdot] $ be the compatible semi-inner product on $ L^p(\mu) $ as defined above. We say that a sequence $ \{f_i\}_{i\in I} $ is a \emph{$ q $-Bessel} for $ L^p(\mu) $ if there exists a constant $ B > 0 $ such that for every $ f\in L^p(\mu) $,
\begin{equation*}
 \sum_{i\in I}|[f,f_i]_{L^p(\mu)}|^q\leq B\| f\|_{L^p(\mu)}^q.
\end{equation*}
We call B a \emph{($ q $-Bessel) bound}. 

We say the sequence $ \{f_i\}_{i\in I} $ is a \emph{$ q $-frame} for $ L^p(\mu) $ if there exist constants $A, B > 0 $ such that for every $ f\in L^p(\mu) $, 
\begin{equation*}
 A\| f\|_{L^p(\mu)}^q\leq\sum_{i\in I}|[f,f_i]_{L^p(\mu)}|^q\leq B\| f\|_{L^p(\mu)}^q.
\end{equation*}
We call $ A,B $ \emph{($ q $-frame) bounds}. We call the sequence $ \{f_i\}_{i\in I} $ a \emph{tight $ q $-frame} if $ A = B $ and \emph{Parseval $ q $-frame} if $ A = B = 1 $. 
\end{definition} 
 We extend the notions of Bessel and frame measures as follows.
\begin{definition}
Suppose that $ 1\leq p<\infty, 1<q\leq\infty $ and $ \dfrac{1}{p} + \dfrac{1}{q} =1 $. Let $ \mu  $ be a finite Borel measure on $ \mathbb{R}^d $, and let $ [\cdot ,\cdot] $ be the compatible semi-inner product on $ L^p(\mu) $ as defined above. We say that a Borel measure $ \nu $ is a \emph{$ (p,q) $-Bessel measure} for $ \mu $, if there exists a constant $ B > 0 $ such that for every $ f\in L^p(\mu) $, 
\begin{equation*}
  \int_{\mathbb{R}^d} |[f,e_t]_{L^p(\mu)}|^q d\nu(t)\leq B\| f\|_{L^p(\mu)}^q \quad  (p \neq 1, q\neq \infty) 
\end{equation*}
and  
\begin{equation*}
 \|\widehat{fd\mu}\|_\infty \leq B\| f\|_{L^1(\mu)} \quad  (p=1, q=\infty).  
\end{equation*}
We call $ B $ a (\emph{$ (p,q) $-Bessel}) \emph{bound} for $ \nu $. 
 
We say the Borel measure $ \nu $ is a \emph{$ (p,q) $-frame measure} for $ \mu $, if there exist constants $ A,B > 0 $ such that for every $ f\in L^p(\mu) $, 
\begin{equation*}
 A\| f\|_{L^p(\mu)}^q\leq \int_{\mathbb{R}^d} |[f,e_t]_{L^p(\mu)}|^q d\nu(t)\leq B\| f\|_{L^p(\mu)}^q \quad  (p \neq 1, q \neq \infty) 
\end{equation*}
and 
\begin{equation*}
 A\| f\|_{L^1(\mu)}\leq  \|\widehat{fd\mu}\|_\infty \leq B\| f\|_{L^1(\mu)} \quad  (p=1, q=\infty).
\end{equation*}
We call $ A,B $ (\emph{$ (p,q) $-frame}) \emph{bounds} for $ \nu $. If $ A = B $, we call the measure $ \nu $ a \emph{tight $ (p,q) $-frame measure} and  if $ A = B =1 $, we call it a \emph{$ (p,q) $-Plancherel measure}.

We denote the set of all $(p,q)$-Bessel measures for $ \mu $ with fixed bound $ B $ by $ \mathcal{B}_B(\mu)_{p,q} $ and  the set of all $(p,q)$-frame measures for $ \mu $ with fixed bounds $A, B $ by $ \mathcal{F}_{A,B}(\mu)_{p,q} $.
\end{definition}
\begin{remark} \label{2.11}
Since $[f,e_t]_{L^p(\mu)}= \int_{R^d} f(x) e^{-2\pi it\cdot x} d\mu(x) = \widehat{f d\mu}(t)$ for any $ f\in L^p(\mu) $ and $ t \in \mathbb{R}^d$, we can also write $ \widehat{f d\mu}(t) $ instead of $ [f,e_t]_{L^p(\mu)} $.
 If there exists a $ (p,q) $-Bessel/frame measure $ \nu $ for $ \mu $, then the function $ T_\nu : L^p(\mu) \rightarrow L^q(\nu) $ defined by $ T_\nu f=\widehat{fd\mu} $ is linear and bounded. For $ p=1 $, $ q= \infty $, every $\sigma$-finite measure $ \nu $ on $ \mathbb{R}^d $ is a $(1, \infty)$-Bessel measure for $ \mu $, since we always have $ \| \widehat{fd\mu} \|_\infty \leq \| f \|_{L^1(\mu)} $. More precisely, $ \nu \in \mathcal{B}_1(\mu)_{(1, \infty)} $.
\end{remark}
\begin{theorem}[\cite{10}]
\emph{(Riesz-Thorin interpolation theorem)} Let $1 \leq p_0, p_1, q_0, q_1 \leq \infty$, where
$p_0 \neq p_1$ and $q_0 \neq q_1$, and let $T$ be a linear operator.  Suppose that for some measure spaces $(Y, \nu)$, $(X, \mu)$, $ T: L^{p_0}(X, \mu)\rightarrow L^{q_0}(Y, \nu) $ is bounded with norm $ C_0 $, and $ T: L^{p_1}(X, \mu)\rightarrow L^{q_1}(Y, \nu) $ is bounded with norm $ C_1 $. Then for all $ \theta \in (0, 1) $ and $ p, q $ defined by $ \dfrac{1}{p} = \dfrac{(1 - \theta)}{p_0} + \dfrac{\theta}{p_1}; \ \dfrac{1}{q} =\dfrac{1- \theta}{q_0} + \dfrac{\theta}{q_1}$, there exists a constant $ C $ such that $ C\leq C_0^{(1-\theta)} C_1^{\theta}$ and $ T: L^{p}(X, \mu)\rightarrow L^{q}(Y, \nu) $ is bounded with norm $ C $.
\end{theorem}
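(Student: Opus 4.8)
The plan is to prove the estimate by the classical method of complex interpolation, resting on the Hadamard three-lines lemma. First I would reduce to simple functions. Since $p_0 \neq p_1$, the two exponents cannot both equal $\infty$, so $\frac{1}{p} = \frac{1-\theta}{p_0} + \frac{\theta}{p_1} > 0$ and hence $p < \infty$; the simple functions are therefore dense in $L^p(X,\mu)$, and it suffices to prove $\|Tf\|_{L^q(\nu)} \le C_0^{1-\theta} C_1^{\theta}$ for every simple $f$ with $\|f\|_{L^p(\mu)} = 1$. By the duality pairing between $L^q(\nu)$ and $L^{q'}(\nu)$ (with $q'$ the conjugate exponent of $q$), this reduces further to bounding $\bigl|\int_Y (Tf)\,g\,d\nu\bigr|$ uniformly over all simple $g$ with $\|g\|_{L^{q'}(\nu)} = 1$.

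Next I would construct an analytic family interpolating the two endpoints. Writing $f = \sum_m c_m e^{i\alpha_m}\chi_{E_m}$ and $g = \sum_n d_n e^{i\beta_n}\chi_{F_n}$ with $c_m, d_n > 0$ over disjoint sets, I introduce the exponents
\[
\frac{1}{p(z)} = \frac{1-z}{p_0} + \frac{z}{p_1}, \qquad \frac{1}{q'(z)} = \frac{1-z}{q_0'} + \frac{z}{q_1'} \qquad (0 \le \mathrm{Re}\,z \le 1),
\]
and the deformed functions $f_z = \sum_m c_m^{\,p/p(z)} e^{i\alpha_m}\chi_{E_m}$ and $g_z = \sum_n d_n^{\,q'/q'(z)} e^{i\beta_n}\chi_{F_n}$. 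Setting
\[
F(z) = \int_Y (Tf_z)\,g_z\,d\nu = \sum_{m,n} c_m^{\,p/p(z)}\,d_n^{\,q'/q'(z)}\,e^{i(\alpha_m+\beta_n)} \int_Y (T\chi_{E_m})\,\chi_{F_n}\,d\nu,
\]
one sees that $F$ is a finite linear combination of exponentials in $z$, hence entire, bounded, and continuous on the closed strip, and that $F(\theta) = \int_Y (Tf)\,g\,d\nu$ since $p(\theta) = p$ and $q'(\theta) = q'$.

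The heart of the argument is the estimate of $F$ on the two boundary lines. On $\mathrm{Re}\,z = 0$ one has $\mathrm{Re}\,\frac{1}{p(it)} = \frac{1}{p_0}$, so $|f_{it}| = |f|^{p/p_0}$ pointwise and $\|f_{it}\|_{L^{p_0}(\mu)} = \|f\|_{L^p(\mu)}^{p/p_0} = 1$, and similarly $\|g_{it}\|_{L^{q_0'}(\nu)} = 1$. Invoking the hypothesis that $T$ is bounded from $L^{p_0}$ to $L^{q_0}$ with norm $C_0$ together with H\"older's inequality gives
\[
|F(it)| \le \|Tf_{it}\|_{L^{q_0}(\nu)}\,\|g_{it}\|_{L^{q_0'}(\nu)} \le C_0,
\]
and the symmetric computation on $\mathrm{Re}\,z = 1$ yields $|F(1+it)| \le C_1$. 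Applying the three-lines lemma to $F$ then forces $|F(\theta)| \le C_0^{1-\theta} C_1^{\theta}$. Taking the supremum over all admissible $g$ recovers $\|Tf\|_{L^q(\nu)} \le C_0^{1-\theta} C_1^{\theta}$, and passing to the limit over simple $f$ by density produces an operator norm $C \le C_0^{1-\theta} C_1^{\theta}$.

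I expect the main obstacle to lie in the careful treatment of the degenerate and endpoint indices. When some exponent among $p_0,p_1,q_0,q_1$ equals $\infty$, the corresponding power in $f_z$ or $g_z$ must be frozen as a constant in $z$ so that no blow-up occurs, and one must re-verify directly that $F$ stays bounded and continuous on the closed strip; when $q = \infty$ the duality step must be replaced by a direct supremum estimate, valid under the standing $\sigma$-finiteness of $\nu$. Establishing this boundedness and continuity up to the boundary — the precise hypothesis required by the three-lines lemma — is the key analytic point, though it becomes transparent once the interpolating family is assembled from simple functions, for then $F$ reduces to a finite sum of entire exponential terms.
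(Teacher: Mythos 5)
Your proof is correct: it is the classical complex-interpolation argument (reduction to simple functions, the analytic family $f_z, g_z$, boundary estimates, and the Hadamard three-lines lemma), which is precisely the proof in Folland's \emph{Real Analysis} --- the reference [10] that the paper cites for this theorem without reproducing a proof. One small simplification you could make: under the stated hypotheses ($q_0 \neq q_1$ and $\theta \in (0,1)$) the exponent $q$ is automatically finite, so the degenerate case $q = \infty$ you flag at the end never actually arises.
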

All measures we consider in this paper, are Borel measures on $ \mathbb{R}^d $.  
\section{Existence and Examples}
In this section we investigate the existence of $ (p,q) $-Bessel and $ (p,q) $-frame measures and also the existence of $ q $-Bessel and $ q $-frame sequences. In addition, we construct examples of measures which admit $ (p,q) $-Bessel measures. 
\begin{proposition}\label{3.1}
Suppose that $ 1<p, q<\infty $ and $ \dfrac{1}{p} + \dfrac{1}{q} =1 $. Let $ \mu $ be a finite Borel measure. Then every finite Borel measure $\nu$ is a $(p,q)$-Bessel measure for $ \mu $.
\end{proposition}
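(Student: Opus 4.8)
The plan is to reduce everything to a uniform pointwise bound on the Fourier transform together with the finiteness of both measures. By Remark \ref{2.11}, for each $t\in\mathbb{R}^d$ we have $[f,e_t]_{L^p(\mu)}=\widehat{fd\mu}(t)=\int_{\mathbb{R}^d} f(x)e^{-2\pi it\cdot x}\,d\mu(x)$, so the quantity I must estimate is $\int_{\mathbb{R}^d}|\widehat{fd\mu}(t)|^q\,d\nu(t)$, and it suffices to produce a finite constant $B$ dominating it by $\|f\|_{L^p(\mu)}^q$.

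First I would obtain a bound on $|\widehat{fd\mu}(t)|$ that is uniform in $t$. Pulling the absolute value inside the integral and using $|e^{-2\pi it\cdot x}|=1$ gives $|\widehat{fd\mu}(t)|\le\int_{\mathbb{R}^d}|f(x)|\,d\mu(x)=\|f\|_{L^1(\mu)}$ for every $t$. Since $\mu$ is finite, Hölder's inequality with the conjugate exponents $p,q$ lets me pass from the $L^1(\mu)$-norm to the $L^p(\mu)$-norm: writing $|f|=|f|\cdot 1$ yields $\|f\|_{L^1(\mu)}\le\|f\|_{L^p(\mu)}\,\mu(\mathbb{R}^d)^{1/q}$. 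Combining the two estimates, $|\widehat{fd\mu}(t)|\le\mu(\mathbb{R}^d)^{1/q}\,\|f\|_{L^p(\mu)}$ for all $t\in\mathbb{R}^d$.

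Finally I would raise this bound to the $q$-th power and integrate against $\nu$. Because the right-hand side is constant in $t$ and $\nu$ is finite,
\begin{equation*}
\int_{\mathbb{R}^d}|\widehat{fd\mu}(t)|^q\,d\nu(t)\le \mu(\mathbb{R}^d)\,\nu(\mathbb{R}^d)\,\|f\|_{L^p(\mu)}^q,
\end{equation*}
so $B=\mu(\mathbb{R}^d)\,\nu(\mathbb{R}^d)$ is a valid $(p,q)$-Bessel bound, finite precisely because both $\mu$ and $\nu$ have finite total mass. I do not expect any substantial obstacle: the entire content is the Hölder step enabled by finiteness of $\mu$, followed by integration enabled by finiteness of $\nu$. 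It is worth noting that if $\nu$ were merely $\sigma$-finite the last step would break down, which is consistent with the paper's later construction of measures $\mu$ admitting \emph{infinite} $(p,q)$-Bessel measures; thus the finiteness hypothesis on $\nu$ is exactly what this elementary argument exploits.
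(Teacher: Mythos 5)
Your proposal is correct and is essentially identical to the paper's own proof: both apply H\"older's inequality with exponents $p,q$ to obtain the uniform pointwise bound $|[f,e_t]_{L^p(\mu)}|\le(\mu(\mathbb{R}^d))^{1/q}\|f\|_{L^p(\mu)}$ and then integrate against the finite measure $\nu$ to arrive at the same Bessel bound $B=\mu(\mathbb{R}^d)\,\nu(\mathbb{R}^d)$. Your closing observation about where finiteness of $\nu$ enters is a nice sanity check but adds nothing beyond the paper's argument.
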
 
\begin{proof}
 Take $f \in L^p (\mu)$ and $ t \in \mathbb{R}^d$. Then by applying Holder's inequality  
\begin{equation*}
|[f, e_t]_{L^p(\mu)}| \leq \int_{\mathbb{R}^d} |f(x) e^{-2\pi it\cdot x}| d\mu(x)\leq (\mu(\mathbb{R}^d))^{\frac{1}{q}}\| f\|_{L^p(\mu)} .
\end{equation*}
Thus,
\begin{equation*}
 \int_{\mathbb{R}^d} |[f, e_t]_{L^p(\mu)}|^q d\nu(t) \leq \mu(\mathbb{R}^d) \nu(\mathbb{R}^d)\| f\|^q_{L^p(\mu)}.
 \end{equation*}
Therefore $\nu \in \mathcal{B}_{\mu(\mathbb{R}^d) \nu(\mathbb{R}^d)}(\mu)_{(p, q)} $. For $ p=1 $, $ q= \infty $, as we mentioned in Remark \ref{2.11} $ \nu \in \mathcal{B}_1(\mu)_{(1, \infty)} $.
\end{proof}
\begin{proposition}\label{3.2}
Suppose that $ 1<p, q<\infty $ and $ \dfrac{1}{p} + \dfrac{1}{q} =1 $. Let $ \Lambda \subset \mathbb{R}^d $, $ \#\Lambda < \infty $ and $ \mu $ be a finite Borel measure. Then the finite sequence $ \{e_\lambda\}_{\lambda\in \Lambda} $ is a $ q $-Bessel for $ L^p(\mu) $.
\end{proposition}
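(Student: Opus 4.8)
The plan is to reduce the claim to the single-term pointwise estimate already obtained in the proof of Proposition \ref{3.1} and then exploit the finiteness of $ \Lambda $. First I would recall that for any fixed $ t \in \mathbb{R}^d $ and any $ f \in L^p(\mu) $, Hölder's inequality with the conjugate exponents $ p, q $ gives
\[
|[f, e_t]_{L^p(\mu)}| \leq \int_{\mathbb{R}^d} |f(x)|\, d\mu(x) \leq (\mu(\mathbb{R}^d))^{\frac{1}{q}}\| f\|_{L^p(\mu)},
\]
exactly as in Proposition \ref{3.1}; here I use $ |e^{-2\pi it\cdot x}| = 1 $ together with the fact that the constant function $ 1 $ lies in $ L^q(\mu) $ with $ \|1\|_{L^q(\mu)} = (\mu(\mathbb{R}^d))^{\frac{1}{q}} $, which holds precisely because $ \mu $ is finite. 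Applying this with $ t = \lambda $ for each $ \lambda \in \Lambda $ and raising to the $ q $-th power yields the uniform bound $ |[f, e_\lambda]_{L^p(\mu)}|^q \leq \mu(\mathbb{R}^d)\, \| f\|_{L^p(\mu)}^q $.

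Next I would sum this estimate over $ \lambda \in \Lambda $. Since $ \Lambda $ is finite, the sum has exactly $ \#\Lambda $ terms, each controlled by the same quantity, so
\[
\sum_{\lambda \in \Lambda} |[f, e_\lambda]_{L^p(\mu)}|^q \leq (\#\Lambda)\, \mu(\mathbb{R}^d)\, \| f\|_{L^p(\mu)}^q .
\]
Taking $ B = (\#\Lambda)\, \mu(\mathbb{R}^d) $, which is a finite positive constant independent of $ f $, establishes the $ q $-Bessel inequality and hence proves that $ \{e_\lambda\}_{\lambda\in\Lambda} $ is a $ q $-Bessel for $ L^p(\mu) $.

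There is essentially no real obstacle here: the entire argument rests on the uniform single-term bound inherited from Proposition \ref{3.1}, and the hypotheses enter only at two points, namely the finiteness of $ \mu $ (guaranteeing $ 1 \in L^q(\mu) $) and the finiteness of $ \Lambda $ (so that the $ q $-sum stays bounded by a finite multiple of the uniform bound). The assumption $ \#\Lambda < \infty $ is exactly what makes the crude term-by-term estimate sufficient; for an infinite index set one would need genuine cancellation or extra structure, which is precisely what the later examples built from F-spectral measures and the Riesz–Thorin interpolation theorem supply.
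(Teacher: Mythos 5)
Your proof is correct and is essentially the paper's argument: the paper simply packages your term-by-term summation as integration against the finite discrete measure $\nu=\sum_{\lambda\in\Lambda}\delta_\lambda$ and then cites Proposition \ref{3.1}, whereas you inline the same H\"older estimate and sum over the $\#\Lambda$ terms, arriving at the identical bound $B=(\#\Lambda)\,\mu(\mathbb{R}^d)$.
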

\begin{proof}
Consider the finite discrete measure $\nu = \sum_{\lambda\in\Lambda}\delta_\lambda $. Since
\begin{equation*}
 \sum_{\lambda\in\Lambda}|[f, e_\lambda]_{L^p(\mu)}|^q =\int_{\mathbb{R}^d} |[f, e_t]_{L^p(\mu)}|^q d\nu(t),
 \end{equation*}
 then the assertion follows from Proposition \ref{3.1}.
\end{proof}
\begin{remark}
Proposition \ref{3.1} shows that the Bessel bound may change for different measures $ \nu $. So if we consider Borel probability measures $ \nu $, then we have a fixed Bessel bound $ \mu(\mathbb{R}^d) $ for all $ \nu $. Moreover, this Bessel bound does not depend on $p, q$, i.e., for every probability measure $\nu$ we have $\nu \in \mathcal{B}_{\mu(\mathbb{R}^d)}(\mu)_{(p, q)} $, where $ 1 < p< \infty $ and $ q $ is the conjugate exponent to $ p $. In addition, we obtain from  Proposition \ref{3.1} that for all conjugate exponents $ p, q > 1 $ the set $ \mathcal{B}_{\mu(\mathbb{R}^d)}(\mu)_{p,q} $ is infinite, since there are infinitely many probability measures $ \nu $ (such as every measure $ \nu=\frac{1}{\lambda(S)}\chi_S d\lambda $ where $ S \subset \mathbb{R}^d$ with the finite Lebesgue measure $ \lambda(S) $, every finite discrete measure $ \nu = \frac{1}{n}\sum_{a=1}^n \delta_a $ where $ \delta_a $ denotes the Dirac measure at the point $ a $, every invariant measure obtained from an iterated function system, and others).
\end{remark}
\begin{proposition} \label{3.3}
Suppose that $ 1<p, q<\infty $ and $ \dfrac{1}{p} + \dfrac{1}{q} =1 $. Let $ \nu $ be a finite Borel measure. Then $ \nu $ is a $(p, q)$-Bessel measure for every finite Borel measure $ \mu $. In addition, $\nu \in \mathcal{B}_{\nu(\mathbb{R}^d)}(\mu)_{(p, q)} $ for all probability measures $ \mu $.
\end{proposition}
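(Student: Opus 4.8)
The plan is to observe that this proposition is, in essence, Proposition \ref{3.1} with the roles of the fixed measure and the varying measure interchanged, and to reuse the very same Holder estimate. First I would fix an arbitrary finite Borel measure $ \mu $, take any $ f\in L^p(\mu) $, and pick an arbitrary $ t\in\mathbb{R}^d $. Applying Holder's inequality with the conjugate exponents $ p,q $ to the pair $ f(x) $ and the constant function $ 1 $, and using $ |e^{-2\pi it\cdot x}|=1 $, I would obtain the pointwise bound
\begin{equation*}
|[f, e_t]_{L^p(\mu)}| \leq \int_{\mathbb{R}^d} |f(x)|\, d\mu(x) \leq (\mu(\mathbb{R}^d))^{\frac{1}{q}}\| f\|_{L^p(\mu)},
\end{equation*}
which is uniform in $ t $.

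Next I would raise both sides to the $ q $-th power and integrate against $ \nu $. Since the right-hand side is independent of $ t $, this yields
\begin{equation*}
\int_{\mathbb{R}^d} |[f, e_t]_{L^p(\mu)}|^q\, d\nu(t) \leq \mu(\mathbb{R}^d)\,\nu(\mathbb{R}^d)\,\| f\|^q_{L^p(\mu)}.
\end{equation*}
The crucial observation is that the resulting bound $ \mu(\mathbb{R}^d)\nu(\mathbb{R}^d) $ is finite exactly because both $ \mu $ and $ \nu $ are finite, and that it is symmetric under exchanging $ \mu $ and $ \nu $. Hence, fixing $ \nu $ and letting $ \mu $ range over all finite Borel measures gives precisely the desired conclusion $ \nu\in\mathcal{B}_{\mu(\mathbb{R}^d)\nu(\mathbb{R}^d)}(\mu)_{(p,q)} $.

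For the additional assertion, I would simply specialize to the case in which $ \mu $ is a probability measure, so that $ \mu(\mathbb{R}^d)=1 $; the bound then collapses to $ \nu(\mathbb{R}^d) $, yielding $ \nu\in\mathcal{B}_{\nu(\mathbb{R}^d)}(\mu)_{(p,q)} $. I do not expect any genuine obstacle here: the mathematical content is identical to that of Proposition \ref{3.1}, and the only point worth noting is the symmetry of the bound in $ \mu $ and $ \nu $, so that the statement is really a restatement emphasizing the dependence on $ \mu $ rather than on $ \nu $.
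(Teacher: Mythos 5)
Your proposal is correct and coincides with the paper's own proof: the paper disposes of this proposition by simply pointing back to the proof of Proposition \ref{3.1}, which is exactly the Holder-inequality estimate you reproduce, followed by the same specialization $\mu(\mathbb{R}^d)=1$ for probability measures. Your remark about the symmetry of the bound $\mu(\mathbb{R}^d)\nu(\mathbb{R}^d)$ in $\mu$ and $\nu$ is precisely why the paper can recycle that proof verbatim.
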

\begin{proof}
See the proof of Proposition \ref{3.1}.
\end{proof}
\begin{corollary}
Suppose that $ 1<p, q<\infty $ and $ \dfrac{1}{p} + \dfrac{1}{q} =1 $. A finite Borel measure $ \nu $ is  a $ (p,q) $-Bessel measure for a finite Borel measure $ \mu $, if and only if $ \mu $ is a $ (p,q) $-Bessel measure for $ \nu $. In particular, every finite Borel measure $ \mu $ is a $ (p,q) $-Bessel measure to itself. 
\end{corollary}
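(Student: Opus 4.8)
The plan is to derive both implications directly from the two preceding propositions, observing that for finite Borel measures each side of the biconditional holds unconditionally. First I would note that, given finite Borel measures $\mu$ and $\nu$, Proposition \ref{3.1} already guarantees that $\nu$ is a $(p,q)$-Bessel measure for $\mu$; indeed the bound $\mu(\mathbb{R}^d)\nu(\mathbb{R}^d)$ produced there is symmetric in the two total masses. Interchanging the roles of $\mu$ and $\nu$ — which is precisely the content of Proposition \ref{3.3} — I would then conclude that $\mu$ is a $(p,q)$-Bessel measure for $\nu$ as well.

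Since both assertions ``$\nu$ is a $(p,q)$-Bessel measure for $\mu$'' and ``$\mu$ is a $(p,q)$-Bessel measure for $\nu$'' are therefore \emph{true} whenever $\mu$ and $\nu$ are finite Borel measures, the biconditional holds trivially: each side implies the other because each side is valid on its own. Concretely, for the forward direction I would invoke Proposition \ref{3.3}, and for the backward direction Proposition \ref{3.1}, so that the equivalence needs no hypothesis beyond finiteness of the two measures.

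For the ``in particular'' clause I would specialize $\nu=\mu$. Either proposition then shows that $\mu$ is a $(p,q)$-Bessel measure for itself, with bound $(\mu(\mathbb{R}^d))^2$.

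I do not anticipate any genuine obstacle here; the only point worth flagging is that the biconditional does not record a nontrivial logical dependence between the two conditions but rather the fact that, for finite Borel measures, both are automatically satisfied. All the real content is carried by the uniform H\"older estimate $|[f,e_t]_{L^p(\mu)}|\le(\mu(\mathbb{R}^d))^{1/q}\|f\|_{L^p(\mu)}$ established in the proof of Proposition \ref{3.1}, so I would simply cite that estimate rather than reproduce it.
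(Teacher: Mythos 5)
Your proposal is correct and matches the paper's own proof, which likewise deduces the corollary directly from Propositions \ref{3.1} and \ref{3.3}; your observation that both sides of the biconditional hold unconditionally for finite measures (making the equivalence trivial) is exactly the content of that citation. The specialization $\nu=\mu$ with bound $(\mu(\mathbb{R}^d))^2$ for the ``in particular'' clause is also the intended reading.
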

\begin{proof}
The statements are direct consequences of Propositions \ref{3.1} and \ref{3.3}.
\end{proof} 
\begin{lemma} \label{3.6}
Suppose that $ 1<p, q<\infty $ and $ \dfrac{1}{p} + \dfrac{1}{q} =1 $. Let $ \mu $ be a finite Borel measure. Then the following assertions hold.

(i) If there exists a countable set $\Lambda $ in $ \mathbb{R}^d $ such that $\{e_\lambda\}_{\lambda \in \Lambda} $ is a $ q $-frame for $L^p (\mu) $, then $\nu = \sum_{\lambda\in\Lambda}\delta_\lambda $ is a $ (p,q) $-frame measure for $\mu$. 

(ii) If $ \nu $ is purely atomic, i.e. $\nu = \sum_{\lambda\in\Lambda} d_\lambda \delta_\lambda $, and a $(p,q)$-frame measure for the probability measure $ \mu $, then $\lbrace\sqrt [q]{d_\lambda}\; e_\lambda \rbrace_{\lambda\in\Lambda}  $ is a q-frame for $L^p (\mu) $.
\end{lemma}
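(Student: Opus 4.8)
The plan is to reduce, in both parts, the integral of $|[f,e_t]_{L^p(\mu)}|^q$ against the atomic measure $\nu$ to a weighted sum over $\Lambda$, and then to match that sum term-by-term with the defining inequalities of a $q$-frame. I will freely use Remark~\ref{2.11}, which identifies $[f,e_t]_{L^p(\mu)}$ with $\widehat{fd\mu}(t)$, together with the homogeneity of the compatible semi-inner product on $L^p(\mu)$ in its second argument.

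For part (i), I would begin from the hypothesis that $\{e_\lambda\}_{\lambda\in\Lambda}$ is a $q$-frame, i.e. there are $A,B>0$ with
\[
A\|f\|_{L^p(\mu)}^q \le \sum_{\lambda\in\Lambda}|[f,e_\lambda]_{L^p(\mu)}|^q \le B\|f\|_{L^p(\mu)}^q
\]
for all $f\in L^p(\mu)$. For $\nu=\sum_{\lambda\in\Lambda}\delta_\lambda$ the integral of a nonnegative Borel function against $\nu$ is exactly the sum of its values over $\Lambda$, so $\int_{\mathbb{R}^d}|[f,e_t]_{L^p(\mu)}|^q\,d\nu(t)=\sum_{\lambda\in\Lambda}|[f,e_\lambda]_{L^p(\mu)}|^q$. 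Substituting this identity into the displayed inequality gives precisely the two-sided bound in the definition of a $(p,q)$-frame measure, with the same constants $A,B$. This direction is pure bookkeeping.

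The content of the lemma lies in part (ii), where the atoms' weights $d_\lambda$ must be absorbed into the frame vectors. The key preliminary step is the scaling law of the semi-inner product: for a real constant $c>0$ one reads off from the defining formula $[f,g]_{L^p(\mu)}=\|g\|_{L^p(\mu)}^{-(p-2)}\int f\,|g|^{p-1}\overline{sgn(g)}\,d\mu$ that $\|cg\|_{L^p(\mu)}=c\|g\|_{L^p(\mu)}$, that $|cg|^{p-1}=c^{p-1}|g|^{p-1}$, and that $sgn(cg)=sgn(g)$; the factor $c^{p-1}$ in the integrand and the factor $c^{p-2}$ from the normalization combine to give $[f,cg]_{L^p(\mu)}=c\,[f,g]_{L^p(\mu)}$. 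Taking $c=\sqrt[q]{d_\lambda}$ therefore yields $|[f,\sqrt[q]{d_\lambda}\,e_\lambda]_{L^p(\mu)}|^q=d_\lambda\,|[f,e_\lambda]_{L^p(\mu)}|^q$. Since $\nu=\sum_{\lambda\in\Lambda}d_\lambda\delta_\lambda$ is purely atomic, the $(p,q)$-frame measure inequality reads $A\|f\|_{L^p(\mu)}^q\le\sum_{\lambda\in\Lambda}d_\lambda\,|[f,e_\lambda]_{L^p(\mu)}|^q\le B\|f\|_{L^p(\mu)}^q$, and the preceding identity rewrites the middle sum as $\sum_{\lambda\in\Lambda}|[f,\sqrt[q]{d_\lambda}\,e_\lambda]_{L^p(\mu)}|^q$. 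This is exactly the $q$-frame quantity for $\{\sqrt[q]{d_\lambda}\,e_\lambda\}_{\lambda\in\Lambda}$, so that family is a $q$-frame for $L^p(\mu)$, again with bounds $A,B$.

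The one genuinely non-formal point, which I expect to be the main obstacle, is the homogeneity identity $[f,cg]_{L^p(\mu)}=c\,[f,g]_{L^p(\mu)}$: it is not among the four semi-inner product axioms (those assert additivity and homogeneity only in the first slot, together with positivity and the Cauchy--Schwarz inequality), so it has to be extracted from the explicit $L^p$ formula as above. The hypothesis in (ii) that $\mu$ be a probability measure serves only to normalize $\|e_\lambda\|_{L^p(\mu)}=1$ and is not otherwise needed; it is this homogeneity that transfers the weights $d_\lambda$ cleanly onto the exponentials.
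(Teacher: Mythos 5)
Your proposal is correct and follows essentially the same route as the paper: both parts reduce the integral against the atomic measure to a (weighted) sum over $\Lambda$ and match it against the $q$-frame inequalities, with the weights absorbed via $|[f,\sqrt[q]{d_\lambda}\,e_\lambda]_{L^p(\mu)}|^q = d_\lambda\,|[f,e_\lambda]_{L^p(\mu)}|^q$. The only difference is that you explicitly verify the homogeneity $[f,cg]_{L^p(\mu)}=c\,[f,g]_{L^p(\mu)}$ from the defining formula of the compatible semi-inner product, a step the paper's two-line proof uses silently; this is a worthwhile clarification, since that property is indeed not among the listed semi-inner product axioms.
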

\begin{proof}
(i) Let $\nu = \sum_{\lambda\in\Lambda}\delta_\lambda $. Then for all $f \in L^p (\mu)$,  
\begin{align*}
\sum_{\lambda\in\Lambda} |[f,e_\lambda]_{L^p(\mu)}|^q =\int_{\mathbb{R}^d} |[f,e_t]_{L^p(\mu)}|^q d\nu(t).
\end{align*}

(ii) Since for all $f \in L^p (\mu)$, 
\begin{equation*}
\int_{\mathbb{R}^d} |[f,e_t]_{L^p(\mu)}|^q d\nu(t) = \sum_{\lambda\in\Lambda}d_\lambda|[f,e_\lambda]_{L^p(\mu)}|^q = \sum_{\lambda\in\Lambda} |[f,\sqrt [q]{d_\lambda} e_\lambda]|^q.
\end{equation*}
\end{proof}
\begin{example} \label{ex 4}
Suppose that $1\leq  p \leq 2 $ and $ q $ is the conjugate exponent to $ p $. If $ f \in  L^p ([0 , 1]^d) $, then from the Hausdorff-Young inequality we have $ \hat{f} \in l^q(\mathbb{Z}^d) $ and $ \| \hat{f} \|_q \leq \| f\|_p $. Therefore the measure $ \nu=\sum_{t \in \mathbb{Z}^d} \delta_t $ is a $ (p,q) $-Bessel measure for $ \mu= \chi_{\{[0 , 1]^d\}}dx $. Besides, $\{e_t\}_{t \in \mathbb{Z}^d}$ is a $ q $-Bessel for $ L^p (\mu) $, since $ \sum_{t\in \mathbb{Z}^d} |[f , e_t]_{ L^p (\mu)}|^q \leq \| f\|_{p}^q $, where $1<  p \leq 2 $ and $ q $ is the conjugate exponent to $ p $.
\end{example}
\begin{proposition} 
Let $1< p \leq 2 $ and $ q $ is the conjugate exponent to $ p $. Let $ 0<a\leq \phi (x)\leq b <\infty $ on $ [0, 1]^d $ and $ \phi_t (x):=\phi (x) $ for all $ t \in \mathbb{Z}^d $. Then $\{\phi_te_t\}_{t \in \mathbb{Z}^d}$ is a $ q $-Bessel for  $ L^p ([0 , 1]^d) $.
\end{proposition}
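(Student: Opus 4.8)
The plan is to reduce $\sum_{t\in\mathbb{Z}^d}|[f,\phi_t e_t]_{L^p(\mu)}|^q$ to a sum of $q$-th powers of Fourier coefficients of an auxiliary function, and then to apply the Hausdorff--Young inequality exactly as in Example~\ref{ex 4}; throughout, $\mu=\chi_{[0,1]^d}\,dx$ denotes Lebesgue measure on the unit cube, so that $L^p(\mu)=L^p([0,1]^d)$.

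First I would evaluate the semi-inner product explicitly. Put $g=\phi_t e_t=\phi\, e_t$. Since $|e_t(x)|=1$ and $\phi(x)>0$, we get $|g(x)|=\phi(x)$, hence $|g(x)|^{p-1}=\phi(x)^{p-1}$ and $\overline{\mathrm{sgn}(g(x))}=\overline{e_t(x)}=e^{-2\pi i t\cdot x}$; moreover $\|g\|_{L^p(\mu)}=\|\phi\|_{L^p(\mu)}$ is independent of $t$. Substituting into the definition of the compatible semi-inner product yields
\begin{equation*}
[f,\phi_t e_t]_{L^p(\mu)}=\frac{1}{c}\int_{[0,1]^d} f(x)\,\phi(x)^{p-1}\,e^{-2\pi i t\cdot x}\,dx=\frac{1}{c}\,\widehat{F}(t),
\end{equation*}
where $c:=\|\phi\|_{L^p(\mu)}^{\,p-2}$, the function $F:=f\,\phi^{p-1}$, and $\widehat{F}(t)$ is the $t$-th Fourier coefficient of $F$. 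Because $\phi\geq a>0$ we have $\|\phi\|_{L^p(\mu)}>0$, so $c$ is a finite positive constant.

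Next I would verify that $F\in L^p([0,1]^d)$ and bound its norm. Since $1<p\leq 2$ gives $0<p-1\leq 1$, the upper bound $\phi\leq b$ implies $\phi^{p-1}\leq b^{\,p-1}$, whence $|F(x)|\leq b^{\,p-1}|f(x)|$ and $\|F\|_p\leq b^{\,p-1}\|f\|_p$. Applying the Hausdorff--Young inequality to $F$ gives $\widehat{F}\in\ell^q(\mathbb{Z}^d)$ with $\|\widehat{F}\|_q\leq\|F\|_p$. Combining these facts and using $(p-1)q=p$ (which follows from $\tfrac1p+\tfrac1q=1$),
\begin{equation*}
\sum_{t\in\mathbb{Z}^d}|[f,\phi_t e_t]_{L^p(\mu)}|^q=\frac{1}{c^{q}}\sum_{t\in\mathbb{Z}^d}|\widehat{F}(t)|^q\leq\frac{1}{c^{q}}\|F\|_p^q\leq\frac{b^{\,(p-1)q}}{c^{q}}\|f\|_p^q=\frac{b^{\,p}}{c^{q}}\|f\|_p^q,
\end{equation*}
so $\{\phi_t e_t\}_{t\in\mathbb{Z}^d}$ is a $q$-Bessel for $L^p([0,1]^d)$ with bound $B=b^{\,p}/c^{q}$.

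The only delicate step is the explicit evaluation of the semi-inner product: one must correctly track the modulus, the signum factor, and the $t$-independent normalization $\|g\|^{p-2}$, and then recognize the result as a Fourier coefficient of $F=f\phi^{p-1}$. After this identity is established, the rest is the routine Hausdorff--Young estimate already used in Example~\ref{ex 4}. Note that the hypothesis $a>0$ is needed only to ensure that each $g=\phi_t e_t$ has nonzero norm and that $c$ is strictly positive, while the upper bound $b$ provides the uniform Bessel constant.
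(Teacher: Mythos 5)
Your proof is correct and follows essentially the same route as the paper's: both evaluate the compatible semi-inner product explicitly (using $\mathrm{sgn}(\phi e_t)=e_t$ and the $t$-independent normalization $\|\phi\|_p^{p-2}$), recognize $[f,\phi_t e_t]$ as a Fourier coefficient of the auxiliary function $\phi^{p-1}f$ (up to the constant $\|\phi\|_p^{2-p}$), and then apply the Hausdorff--Young inequality exactly as in Example~\ref{ex 4}. Your explicit bound $b^p/\|\phi\|_p^{(p-2)q}$ is in fact cleaner than the paper's stated constant $b^p/a^{p-q}$, whose derivation implicitly uses $\|\phi\|_p^{(p-2)p}\geq a^{(p-2)p}$ --- an inequality that reverses for $1<p<2$ since the exponent is negative; the correct uniform constant obtained either way is $b^q$.
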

\begin{proof}
 Take $ f\in L^p \left([0 , 1]^d\right) $. We have $ \dfrac{1}{\| \phi \|_p^{p-2}}\phi^{p-1} f\in L^p ([0 , 1]^d) $, since 
\begin{equation*}
\int_{[0 , 1]^d}  |f(x)|^p \left| \dfrac{ \phi^{p-1}(x)}{\| \phi \|_p^{p-2}} \right|^p d(x) \leq \dfrac{b^{(p-1)p}}{a^{(p-2)p}} \int_{[0 , 1]^d} |f(x)|^p d(x)<\infty.
\end{equation*}
Hence by Example \ref{ex 4},
\begin{align*}
\sum_{t\in \mathbb{Z}^d} \left|[f , \phi_t e_t]_{L^p ([0 , 1]^d)} \right|^q &= \sum_{t\in \mathbb{Z}^d} \left| \dfrac{1}{\| \phi \|_p^{p-2}} \int_{[0 , 1]^d} f(x) \left|\phi(x)e_t(x) \right|^{p-1}e_{-t}(x)dx \right|^q \\ 
&\leq \left| \int_{[0 , 1]^d}  |f(x)|^p \left| \dfrac{ \phi^{p-1}(x)}{\| \phi \|_p^{p-2}} \right|^p dx\right|^{q/p}\leq \dfrac{b^p}{a^{p-q}} \| f\|_p^q.
\end{align*}
\end{proof}
\begin{corollary}Suppose that $ 1<p, q<\infty $ and $ \dfrac{1}{p} + \dfrac{1}{q} =1 $. Let $ \mu $ be a probability measure. Let $ 0<a\leq \phi (x)\leq b <\infty $ on $ supp\mu $ and $ \phi_i (x):=\phi (x) $ for all $ i\in I $. If $\{f_i\}_{i\in I}$ is a $ q $-frame for $ L^p (\mu) $, then $ \{\phi_if_i\}_{i \in I}$ is also a $ q $-frame for $ L^p (\mu) $ and for every $ f\in L^p(\mu) $,
\begin{equation*}
\dfrac{a^p}{b^{p-q}} A\parallel f\parallel_{L^p(\mu)}^q\leq\|\{[f,\phi_if_i]_{L^p(\mu)}\}_{i\in I} \|^q\leq \dfrac{b^p}{a^{p-q}} B\| f\|_{L^p(\mu)}^q.
\end{equation*}
\end{corollary}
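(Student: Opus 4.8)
The plan is to mirror the proof of the preceding Proposition, replacing the exponentials $e_t$ (for which the normalization $\|\phi_t e_t\|_{L^p(\mu)}=\|\phi\|_{L^p(\mu)}$ was available) by the general frame vectors $f_i$, and feeding the resulting pairing into the $q$-frame inequality for $\{f_i\}_{i\in I}$ instead of into the Hausdorff--Young inequality of Example \ref{ex 4}. The one genuinely new feature is that $\|\phi f_i\|_{L^p(\mu)}$ now depends on $i$, and controlling that dependence uniformly is the heart of the argument.

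First I would unwind the compatible semi-inner product. Since $0<a\le\phi\le b$ on $\operatorname{supp}\mu$, the weight $\phi$ is strictly positive $\mu$-a.e., so $\operatorname{sgn}(\phi f_i)=\operatorname{sgn}(f_i)$ and $|\phi f_i|^{p-1}=\phi^{p-1}|f_i|^{p-1}$. Pulling the factor $\phi^{p-1}$ onto the analysed vector $f$ inside the integral and recognizing the result as the semi-inner product of $\phi^{p-1}f$ against $f_i$ yields the key identity
\begin{equation*}
[f,\phi_i f_i]_{L^p(\mu)} = \frac{\|f_i\|_{L^p(\mu)}^{p-2}}{\|\phi f_i\|_{L^p(\mu)}^{p-2}}\,[\phi^{p-1}f,\,f_i]_{L^p(\mu)}.
\end{equation*}
This is exactly the analogue of the step in the Proposition where the pairing with $\phi_t e_t$ became the Fourier coefficient of $\phi^{p-1}f/\|\phi\|^{p-2}$: multiplying the frame vector by $\phi$ amounts, up to the scalar $\|f_i\|^{p-2}/\|\phi f_i\|^{p-2}$, to multiplying $f$ by $\phi^{p-1}$.

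Next I would estimate the two loss factors separately and uniformly in $i$. From $a\|f_i\|_{L^p(\mu)}\le\|\phi f_i\|_{L^p(\mu)}\le b\|f_i\|_{L^p(\mu)}$ one bounds the scalar $\|f_i\|^{p-2}/\|\phi f_i\|^{p-2}$ by fixed powers of $a$ and $b$, and from $a^{p-1}\le\phi^{p-1}\le b^{p-1}$ one gets $a^{p-1}\|f\|_{L^p(\mu)}\le\|\phi^{p-1}f\|_{L^p(\mu)}\le b^{p-1}\|f\|_{L^p(\mu)}$. Writing $g:=\phi^{p-1}f$ and applying the $q$-frame inequality $A\|g\|_{L^p(\mu)}^q\le\sum_{i\in I}|[g,f_i]_{L^p(\mu)}|^q\le B\|g\|_{L^p(\mu)}^q$ then sandwiches $\sum_{i\in I}|[f,\phi_i f_i]_{L^p(\mu)}|^q$ between $A$- and $B$-multiples of the product of these loss factors times $\|f\|_{L^p(\mu)}^q$. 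Since all resulting constants are strictly positive and finite, $\{\phi_i f_i\}_{i\in I}$ is a $q$-frame, and in particular $\{[f,\phi_i f_i]\}_{i\in I}\in\ell^q$.

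The main obstacle---really the only nontrivial point---is the constant bookkeeping, which hinges on the conjugacy identities $(p-1)q=p$ and $(p-2)q=p-q$. These collapse $b^{(p-1)q}$ to $b^p$ and $a^{(p-2)q}$ to $a^{p-q}$, converting the raw powers of $a,b$ produced above into the stated bounds $\tfrac{a^p}{b^{p-q}}A$ and $\tfrac{b^p}{a^{p-q}}B$. I would take particular care with the direction of the estimate for $\|f_i\|^{p-2}/\|\phi f_i\|^{p-2}$, since the sign of $p-2$ decides which of $a,b$ supplies the upper and which the lower bound; keeping this consistent with the sign of $p-q$ (which it matches) is precisely what allows every factor to recombine into the clean closed form in the statement.
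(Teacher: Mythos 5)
The paper states this corollary without proof, as an immediate consequence of the preceding Proposition, and your proposal is exactly the intended argument: your key identity
\[
[f,\phi_i f_i]_{L^p(\mu)} \;=\; \frac{\|f_i\|_{L^p(\mu)}^{p-2}}{\|\phi f_i\|_{L^p(\mu)}^{p-2}}\,[\phi^{p-1}f,\,f_i]_{L^p(\mu)}
\]
is correct (since $\phi\ge a>0$ on $\operatorname{supp}\mu$ one has $|\phi f_i|^{p-1}=\phi^{p-1}|f_i|^{p-1}$ and $\operatorname{sgn}(\phi f_i)=\operatorname{sgn}(f_i)$), and feeding $g:=\phi^{p-1}f$ into the $q$-frame inequality for $\{f_i\}_{i\in I}$, with uniform bounds on the scalar factor, does prove that $\{\phi_i f_i\}_{i\in I}$ is a $q$-frame.

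The gap is in your final bookkeeping claim. Write $R_i=\left(\|f_i\|_{L^p(\mu)}/\|\phi f_i\|_{L^p(\mu)}\right)^{p-2}$, so that $\sum_{i\in I}|[f,\phi_i f_i]|^q=\sum_{i\in I}R_i^q\,|[g,f_i]|^q$. When $p>2$ (so $p-2>0$ and $p-q>0$) everything recombines as you say: $\sup_i R_i^q=a^{(2-p)q}=1/a^{p-q}$, $\|g\|^q_{L^p(\mu)}\le b^{(p-1)q}\|f\|^q_{L^p(\mu)}=b^p\|f\|^q_{L^p(\mu)}$, giving $\tfrac{b^p}{a^{p-q}}B$, and dually $\tfrac{a^p}{b^{p-q}}A$ below. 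But when $1<p<2$ the flip goes the other way than you assert: $\sup_i R_i^q=b^{(2-p)q}=b^{q-p}$, not $a^{q-p}$, so both loss factors in the upper estimate degrade toward $b$ and the method yields the constants $b^{q-p}\cdot b^p=b^q$ above and $a^q$ below --- not the mixed constants $\tfrac{b^p}{a^{p-q}}=b^pa^{q-p}$ and $\tfrac{a^p}{b^{p-q}}=a^pb^{q-p}$ of the statement. This is not merely a weakness of the method: for $1<p<2$ the stated constants are false. Take $\phi\equiv b$ and any $a\in(0,b)$ (an admissible lower bound for $\phi$); then $[f,\phi_i f_i]=b\,[f,f_i]$, so $\sum_{i\in I}|[f,\phi_i f_i]|^q=b^q\sum_{i\in I}|[f,f_i]|^q$, and if $B$ is the optimal upper bound of $\{f_i\}_{i\in I}$ the supremum of the left-hand side over $\|f\|_{L^p(\mu)}=1$ equals $b^qB>b^pa^{q-p}B$. (The defect is inherited from the paper: the proof of the preceding Proposition bounds $1/\|\phi\|_p^{p-2}$ by $1/a^{p-2}$, which is the wrong direction precisely when $p<2$, the range that Proposition claims to cover.) So you should either restrict your claim about the constants to $p\ge 2$, or state the constants your argument actually produces, namely $\min\left(a^q,\tfrac{a^p}{b^{p-q}}\right)A$ and $\max\left(b^q,\tfrac{b^p}{a^{p-q}}\right)B$; the qualitative conclusion that $\{\phi_i f_i\}_{i\in I}$ is a $q$-frame survives in all cases.
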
 
\begin{remark} Example \ref{ex 4} cannot be extended to the case $ p>2 $, since there exist continuous functions $ f $ such that $ \sum_{n\in \mathbb{Z}} |[f , e_n]_{L^p(\mu)}|^{2-\epsilon} =\infty $ for all $ \epsilon>0 $. An example of such a function is\\
 $ f(x) = \sum_{n=2}^\infty \dfrac{e^{i n\log n}}{n^{1/2}(\log n)^2}e^{inx} $ (see \cite{17}). Therefore $\{e_n\}_{n \in \mathbb{Z}}$ is not a $ q $-Bessel for $ L^p ([0 , 1]) $ and also $ \nu=\sum_{n \in \mathbb{Z}} \delta_n $ is not a $ (p,q) $-Bessel measure for $ \mu= \chi_{[0 , 1] }dx $ where $ p>2 $.
 \end{remark}
\begin{proposition}\label{2.21}
Suppose that $ 1<p, q<\infty $ and $ \dfrac{1}{p} + \dfrac{1}{q} =1 $. Let $ \mu $ be a compactly supported Borel probability measure. Consider two subsets of $ \mathbb{R}^d $, $ \Lambda= \{ \lambda_n \; : n \in \mathbb{N} \} $ and $ \Omega= \{ \omega_n \; : n \in \mathbb{N}\} $ with the property that there exists a positive constant $ C $ such that $ |\lambda_n - \omega_n| \leq C $ for $ n\in \mathbb{N}$. 

(i) If $\{e_{\lambda_n}\}_{n \in \mathbb{N}}$ is a $ q $-Bessel for $ L^p (\mu) $, then $\{e_{\omega_n}\}_{n \in \mathbb{N}} $ is a $ q $-Bessel too.

(ii) If $\{e_{\lambda_n}\}_{n \in \mathbb{N}}$ is a $ q $-frame for $ L^p (\mu) $, then there exists a $ \delta >0 $ such that if $ C\leq \delta $ then $\{e_{\omega_n}\}_{n \in \mathbb{N}} $ is a $ q $-frame too (see \cite{3}).
\begin{proof}
We need only consider the case, when all $ \omega_n = \left( (\omega_n)_1, \ldots, (\omega_n)_d \right) $ differ from $ \lambda_n = \left( (\lambda_n)_1, \ldots, (\lambda_n)_d \right)  $ just on the first component, then the assertion follows by induction on the number of components.\\
Let supp$ \mu \subseteq [-M , M]^d $ for some $ M>0 $. Let $ f\in L^p (\mu) $ and $ x\in \mathbb{R}^d $. The function $ \widehat{fd\mu} $ is analytic in each variable $ t_1,\ldots, t_d $. Moreover,
\begin{equation*}
\dfrac{\partial^k \widehat{fd\mu}}{\partial t_1 ^k} (t) =\int f(x) (-2\pi i x_1)^k e^{-2\pi i t\cdot x} d\mu (x)= \left[(-2\pi i x_1)^k f, e_t\right]_{L^p (\mu)}, \quad ( t \in \mathbb{R}^d ).
\end{equation*}
Writing the Taylor expansion at $(\lambda_n)_1$ in the first variable and using Holder's inequality, for all $ n\in \mathbb{N} $,      
\begin{align*}
\lvert \widehat{fd\mu}(\omega_n) -\widehat{fd\mu}(\lambda_n) \rvert^q &= \left| \sum_{k=1} ^\infty \dfrac{\dfrac{\partial^k \widehat{fd\mu}}{\partial t_1 ^k}(\lambda_n)}{k!} ( (\omega_n)_1 - (\lambda_n)_1 )^k \right|^q \\
&\leq \sum_{k=1}^\infty \dfrac{\left|\dfrac{\partial^k \widehat{fd\mu}}{\partial t_1 ^k}(\lambda_n)\right|^q}{k!}\cdot \left(\sum_{k=1}^\infty \dfrac{|(\omega_n)_1 - (\lambda_n)_1 |^{pk}}{k!}\right)^{q/p}\\
&\leq \sum_{k=1}^\infty \dfrac{\left|\dfrac{\partial^k \widehat{fd\mu}}{\partial t_1 ^k}(\lambda_n)\right|^q}{k!}\cdot \left( \sum_{k=1}^\infty \dfrac{C^{pk}}{k!}\right)^{q-1}=\sum_{k=1}^\infty \dfrac{\left|\dfrac{\partial^k \widehat{fd\mu}}{\partial t_1 ^k}(\lambda_n)\right|^q}{k!}\cdot \left(e^{C^p}-1\right)^{q-1}. 
\end{align*}
Considering the $ q $-Bessel $\{e_{\lambda_n}\}_{n \in \mathbb{N}}$ with a bound $ B $, we obtain
\begin{equation*}
\sum_{n\in \mathbb{N}}\left|\dfrac{\partial^k \widehat{fd\mu}}{\partial t_1 ^k}(\lambda_n)\right|^q= \sum_{n\in \mathbb{N}} \left| \left[ (-2\pi i x_1)^k f, e_{\lambda_n} \right]_{L^p (\mu)}\right|^q\leq B\|(-2\pi i x_1)^k f\|_{L^p(\mu)}^q\leq B (2\pi M)^{qk} \| f\|_{L^p(\mu)}^q.
\end{equation*}
Then
\begin{align*}
\sum_{n\in \mathbb{N}}\lvert \widehat{fd\mu}(\omega_n) -\widehat{fd\mu}(\lambda_n) \rvert^q &\leq B \left(e^{C^p}-1\right)^{q-1} \| f\|_{L^p(\mu)}^q \sum_{k=1}^\infty \dfrac{(2\pi M)^{qk}}{k!} \\
&= B \left(e^{C^p}-1\right)^{q-1} \left(e^{{(2\pi M)}^q}-1\right) \| f\|_{L^p(\mu)}^q.  
\end{align*}
Hence by Minkowski's inequality, 
\begin{align*}
\left(\sum_{n\in \mathbb{N}}| \widehat{fd\mu}(\omega_n) |^q \right)^{1/q} &\leq \left(\sum_{n\in \mathbb{N}}| \widehat{fd\mu}(\lambda_n) |^q \right)^{1/q} + \left(\sum_{n\in \mathbb{N}}\lvert \widehat{fd\mu}(\omega_n) -\widehat{fd\mu}(\lambda_n) \rvert^q\right)^{1/q}\\
&\leq \left( B^{1/q} + \left(B\left( e^{C^p}-1\right)^{q-1} \left(e^{{(2\pi M)}^q}-1\right)\right)^{1/q} \right) \| f\|_{L^p(\mu)}, 
\end{align*}
and this implies that $\{e_{\omega_n}\}_{n \in \mathbb{N}} $ is a $ q $-Bessel for $ L^p (\mu) $.\\
To show that $\{e_{\omega_n}\}_{n \in \mathbb{N}} $ is also a $ q $-frame for $ L^p (\mu) $, let $ A $ be a lower bound for $\{e_{\lambda_n}\}_{n \in \mathbb{N}}$. Take $ \delta > 0 $ small enough such that for $ 0 < C \leq \delta $,
\begin{equation*}
A^{1/q} -  \left(B\left( e^{C^p}-1\right)^{q-1} \left(e^{{(2\pi M)}^q}-1\right)\right)^{1/q} > 0.
\end{equation*}
Then, by Minkowski's inequality,
\begin{align*}
\left(\sum_{n\in \mathbb{N}}| \widehat{fd\mu}(\omega_n) |^q \right)^{1/q} &\geq \left(\sum_{n\in \mathbb{N}}| \widehat{fd\mu}(\lambda_n) |^q \right)^{1/q} - \left(\sum_{n\in \mathbb{N}}\lvert \widehat{fd\mu}(\omega_n) -\widehat{fd\mu}(\lambda_n) \rvert^q\right)^{1/q}\\
&\geq \left( A^{1/q} - \left( B\left( e^{C^p}-1\right)^{q-1} \left(e^{{(2\pi M)}^q}-1\right)\right)^{1/q} \right) \| f\|_{L^p(\mu)}. 
\end{align*}
Thus the assertion follows.
\end{proof}
\begin{proposition} \label{3.9}
Suppose that  $1 \leq p_0, p_1 < \infty$ and $ q_0, q_1  $ are the conjugate exponents to $ p_0, p_1 $ respectively. If $ \nu $ is a $ (p_0,q_0) $-Bessel measure and a $ (p_1,q_1) $-Bessel measure for $ \mu $, then $ \nu $ is also a $ (p,q) $-Bessel measure for $ \mu $, where $ p_0 < p < p_1 $ and $ q $ is the conjugate exponent to $ p $.
\end{proposition}
\begin{proof}
If $ \nu $ is a $ (p_0,q_0) $-Bessel measure for $ \mu $ with bound $ C $ and also a $ (p_1,q_1) $-Bessel measure with bound $ D $, we have 
\begin{equation*}
\forall f \in  L^{p_0}( \mu) \;\;\;   \| \widehat{fd\mu}\|^{q_0}_{ L^{q_0}(\nu)} \leq C  \| f \|_{ L^{p_0}(\mu)}^{q_0},  
\end{equation*} 
and
\begin{equation*}
\forall f \in  L^{p_1}( \mu) \;\;\;    \|\widehat{fd\mu}\|^{q_1}_{ L^{q_1}(\nu)} \leq D \| f \|_{ L^{p_1}(\mu)}^{q_1} .
\end{equation*}

Now if $ \dfrac{1}{p} = \dfrac{(1 - \theta)}{p_0} + \dfrac{\theta}{p_1}; \ \dfrac{1}{q} =\dfrac{1- \theta}{q_0} + \dfrac{\theta}{q_1}$, where $ 0 < \theta < 1 $ (i.e., $ p_0 < p < p_1 $ and $ \dfrac{1}{p} + \dfrac{1}{q} = 1 $), then the Riesz-Thorin interpolation theorem yields
\begin{equation*}
\forall f \in  L^{p}( \mu) \qquad \| \widehat{fd\mu}\|_{ L^q (\nu)}^q \leq B^q \| f \|_{ L^p (\mu)}^q. 
\end{equation*}
where $ B\leq C^{\frac{1}{q_0}(1-\theta)} D^{\frac{1}{q_1}\theta}$ (Considering the fact that if $ p_0 =1 $ and $ q_0=\infty $, then $ C^{\frac{1}{q_0}} $ changes to $ C $, and if $ p_1 =1 $ and $ q_1=\infty $, then $ D^{\frac{1}{q_1}} $ changes to $ D $). 
Hence $ \nu $ is a $ (p,q) $-Bessel measure for $ \mu $, where $ p_0 < p < p_1 $ and $ q $ is the conjugate exponent to $ p $. 
\end{proof}
\begin{corollary}
If $ \nu $ is a Bessel/frame measure for $ \mu $, then $ \nu $ is also a $ (p,q) $-Bessel measure for  $ \mu $, where $ 1\leq p \leq 2 $ and $ q $ is the conjugate exponent to $ p $.
\end{corollary}
\begin{proof} 
Let $ p_0=1, q_0=\infty, p_1=2, q_1=2 $ in the assumption of Proposition \ref{3.9}, then the conclusion follows.
\end{proof}
\begin{proposition} \label{3.13}
 If $ \nu \in \mathcal{F}_{A,B}(\mu) $, then for any constant $\alpha > 0$, $ \nu $ is a frame measure for $ \alpha\mu $. More precisely $ \nu \in \mathcal{F}_{\alpha A,\alpha B}(\alpha\mu) $.
 \end{proposition}
 \begin{proof} Since $ \nu \in \mathcal{F}_{A,B}(\mu) $ for all  $ f\in L^2(\mu) $,
 \begin{equation*} 
A\| f\|_{L^2(\mu)}^2 \leq \|\widehat{fd\mu}\|_{L^2(\nu)}^2 \leq B\| f\|_{L^2(\mu)}^2,  
\end{equation*}
and we have
\begin{equation*} 
\|\widehat{fd\alpha\mu}\|_{L^2(\nu)}^2 = \int_{\mathbb{R}^d} \left|\int_{\mathbb{R}^d} f(x) e_{-t}(x) d\alpha \mu(x)\right|^2 d\nu(t)=\|\widehat{\alpha fd\mu}\|_{L^2(\nu)}^2.
\end{equation*} 
Since $  \alpha f\in L^2(\mu) $, 
\begin{equation*} 
A\| \alpha f\|_{L^2(\mu)}^2 \leq \|\widehat{\alpha fd\mu}\|_{L^2(\nu)}^2 \leq B\| \alpha f\|_{L^2(\mu)}^2\quad \text{for all }  f\in L^2(\mu).  
\end{equation*} 
Therefore, 
\begin{equation*}
\alpha A\| f\|_{L^2(\alpha \mu)}^2 \leq \|\widehat{fd\alpha \mu}\|_{L^2(\nu)}^2 \leq \alpha B\| f\|_{L^2(\alpha \mu)}^2\quad \text{for all }   f\in L^2(\alpha \mu).  
\end{equation*}
Hence
 $ \nu \in \mathcal{F}_{\alpha A,\alpha B}(\alpha\mu) $.
\end{proof}
\begin{theorem}[\cite{23}] \label{2.17}
There exist positive constants $ c, C $ such that for every set $ S \subset \mathbb{R}^d $ of finite measure, there is a discrete set $ \Lambda \subset \mathbb{R}^d $ such that $ E(\Lambda) $ is a frame for $ L^2(S) $ with frame bounds $ c|S| $ and $ C|S| $, where $|S|$ denotes the measure of $S$.
\end{theorem}
\begin{theorem} \label{2.19}
Let $ S $ be a subset (not necessarily bounded) of $ \mathbb{R}^d $ with finite Lebesgue measure $|S|$. Then the probability measure $ \mu=\frac{1}{|S|}\chi_S dx $ has an infinite discrete $(p,q)$-Bessel measure $ \nu $, where $1\leq p \leq 2 $ and $ q $ is the conjugate exponent to $ p $.
\end{theorem}
\begin{proof}
By Theorem \ref{2.17}, There are positive constants $ c, C $ such that for every set $ S \subset \mathbb{R}^d $ 
of finite Lebesgue measure $|S|$, there is a discrete set $ \Lambda \subset \mathbb{R}^d $ such that $ E(\Lambda) $ is a frame for $ L^2(S) $
with frame bounds $ c|S|$ and $ C|S|$. Then by considering the upper bound of the frame, we have
\begin{equation*}
 \sum_{\lambda \in \Lambda }|\left<f,e_\lambda\right>|^2 \leq C|S|\| f\|_{L^2(S)}^2 \quad \text{for all }  f\in L^2(S). 
\end{equation*}
Let $ \mu=\frac{1}{|S|}\chi_S dx $, and then by Proposition \ref{3.13}, 
\begin{equation*}
 \sum_{\lambda \in \Lambda }|\left<f,e_\lambda\right>|^2 \leq C\| f\|_{L^2(\mu)}^2 \quad \text{for all }  f\in L^2(\mu).
\end{equation*}
In addition, $ \|\{[f,e_\lambda]_{L^1(\mu)}\}_{\lambda\in \Lambda} \|_\infty \leq \| f \|_{L^1(\mu)} $, for every $ f $ in $ L^1(\mu) $. Now if $ \dfrac{1}{p} = 1 -\dfrac{\theta}{2} ; \ \dfrac{1}{q} =\dfrac{\theta}{2}$, for $ 0 <\theta < 1 $ (i.e., $1 < p < 2 $ and $ q $ is the conjugate exponent to $ p $), then the Riesz-Thorin interpolation theorem yields
\begin{equation*}
 \sum_{\lambda \in \Lambda }|[f,e_\lambda]_{L^p(\mu)}|^q \leq \mathcal{C}^q\| f\|_{L^p(\mu)}^q \quad \text{for all }  f\in L^p(\mu), 
\end{equation*}
where $ \mathcal{C} \leq C^{\frac{1}{2}\theta} $. Therefore, $ \nu=\sum_{\lambda \in \Lambda} \delta_\lambda $ is a $ (p,q) $-Bessel measure for $ \mu=\frac{1}{|S|}\chi_S dx $, and we have $ \nu \in \mathcal{B}_{\mathcal{C}^q}(\mu)_{(p, q)} $, where $1 < p < 2 $ and $ q $ is the conjugate exponent to $ p $. Moreover, $ \nu \in \mathcal{B}_{C}(\mu)_{(2, 2)} $ and $ \nu \in \mathcal{B}_{1}(\mu)_{(1, \infty)} $. On the other hand for every $1 < p < 2 $ and $ q $ (the conjugate exponent to $ p $), $ \{e_\lambda\}_{\lambda \in \Lambda} $ is a $ q $-Bessel for $ L^p(\mu) $, with bound $ \mathcal{C}^q $.
\end{proof}
\end{proposition}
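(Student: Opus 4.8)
The plan is to treat this as a stability/perturbation statement: the obstruction to transferring the $q$-Bessel (resp.\ $q$-frame) property from $\Lambda$ to $\Omega$ is entirely the deviation $\widehat{fd\mu}(\omega_n) - \widehat{fd\mu}(\lambda_n)$, so the whole argument amounts to controlling $\sum_{n}\lvert\widehat{fd\mu}(\omega_n)-\widehat{fd\mu}(\lambda_n)\rvert^q$ by a quantity that is uniformly small in $f$ and tends to $0$ as $C\to 0$. First I would reduce to the case where $\omega_n$ and $\lambda_n$ differ only in the first coordinate; the general case follows by iterating coordinate by coordinate (applying the one-coordinate estimate $d$ times). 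Writing $\mathrm{supp}\,\mu\subseteq[-M,M]^d$, the key structural fact is that, because $\mu$ is compactly supported, $\widehat{fd\mu}$ extends to an entire function in each variable and its partial derivatives are again of semi-inner-product form,
\begin{equation*}
\frac{\partial^k \widehat{fd\mu}}{\partial t_1^k}(t) = \left[(-2\pi i x_1)^k f,\, e_t\right]_{L^p(\mu)},
\end{equation*}
where $(-2\pi i x_1)^k f \in L^p(\mu)$ since $x_1$ is bounded on $\mathrm{supp}\,\mu$.

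Next I would expand $\widehat{fd\mu}$ in its Taylor series about $(\lambda_n)_1$ in the first variable, giving $\widehat{fd\mu}(\omega_n)-\widehat{fd\mu}(\lambda_n) = \sum_{k\geq 1}\frac{1}{k!}\frac{\partial^k \widehat{fd\mu}}{\partial t_1^k}(\lambda_n)\,((\omega_n)_1-(\lambda_n)_1)^k$. Splitting the factor $\frac{1}{k!}=(\frac{1}{k!})^{1/q}(\frac{1}{k!})^{1/p}$ and applying H\"older's inequality with conjugate exponents $q,p$, together with $\lvert(\omega_n)_1-(\lambda_n)_1\rvert\leq C$ and the identity $q/p=q-1$, yields the pointwise-in-$n$ estimate
\begin{equation*}
\lvert\widehat{fd\mu}(\omega_n)-\widehat{fd\mu}(\lambda_n)\rvert^q \leq \left(\sum_{k=1}^\infty \frac{1}{k!}\left\lvert\frac{\partial^k \widehat{fd\mu}}{\partial t_1^k}(\lambda_n)\right\rvert^q\right)\left(e^{C^p}-1\right)^{q-1}.
\end{equation*}

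The crucial step is to sum this over $n$ and interchange the $n$- and $k$-summations (all terms are nonnegative, so Tonelli applies). For each fixed $k$, the $q$-Bessel hypothesis on $\{e_{\lambda_n}\}$ with bound $B$ gives $\sum_n\lvert[(-2\pi i x_1)^k f, e_{\lambda_n}]\rvert^q \leq B\,\|(-2\pi i x_1)^k f\|_{L^p(\mu)}^q$, while compact support gives $\|(-2\pi i x_1)^k f\|_{L^p(\mu)} \leq (2\pi M)^k\|f\|_{L^p(\mu)}$. Summing the resulting exponential series produces the clean bound $\sum_n\lvert\widehat{fd\mu}(\omega_n)-\widehat{fd\mu}(\lambda_n)\rvert^q \leq B\,(e^{C^p}-1)^{q-1}(e^{(2\pi M)^q}-1)\,\|f\|_{L^p(\mu)}^q$. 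An application of Minkowski's inequality (the triangle inequality in $\ell^q$) then transfers the upper bound from $\Lambda$ to $\Omega$ and establishes $(i)$. For $(ii)$, if $A$ is a lower bound for $\{e_{\lambda_n}\}$, the reverse triangle inequality gives a lower estimate of the form $(A^{1/q}-[\text{error}]^{1/q})\|f\|_{L^p(\mu)}$ whose error factor contains $(e^{C^p}-1)^{q-1}\to 0$ as $C\to 0$; choosing $\delta>0$ small enough that $A^{1/q}$ strictly dominates the error term whenever $0<C\leq\delta$ preserves a positive lower frame bound.

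The main obstacle is the Taylor-plus-H\"older estimate: one must distribute the $\frac{1}{k!}$ across the two H\"older exponents so that, after summing in $n$, the $\lambda$-side quantity is exactly the form controlled by the $q$-Bessel bound \emph{for each $k$ separately}, and so that the displacement $\lvert(\omega_n)_1-(\lambda_n)_1\rvert\leq C$ contributes precisely the vanishing factor $(e^{C^p}-1)^{q-1}$. The compact-support bound on the derivatives is what guarantees both that $(-2\pi i x_1)^k f$ stays in $L^p(\mu)$ and that the series $\sum_k (2\pi M)^{qk}/k!$ converges, which is essential for the interchange of summations to be legitimate.
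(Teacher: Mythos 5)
Your proposal is correct and takes essentially the same route as the paper's own proof: the same coordinate-by-coordinate reduction, the same Taylor expansion with the derivative identity $\frac{\partial^k \widehat{fd\mu}}{\partial t_1^k}(t)=\left[(-2\pi i x_1)^k f, e_t\right]_{L^p(\mu)}$, the identical splitting of $1/k!$ in H\"older's inequality yielding the factor $\left(e^{C^p}-1\right)^{q-1}$, the per-$k$ use of the Bessel bound together with $\|(-2\pi i x_1)^k f\|_{L^p(\mu)}\leq (2\pi M)^k\|f\|_{L^p(\mu)}$, and Minkowski's inequality (direct and reverse) in $\ell^q$ for parts (i) and (ii), arriving at the same constant $B\left(e^{C^p}-1\right)^{q-1}\left(e^{(2\pi M)^q}-1\right)$. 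Your explicit appeal to Tonelli to justify interchanging the $n$- and $k$-sums is a small point the paper leaves implicit; otherwise the arguments coincide.
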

If $ S \subset  \mathbb{R}^d $ is a compact set with positive Lebesgue measure, then by Theorem \ref{2.17}, we always have the measure $ \mu=\frac{1}{|S|}\chi_S dx $ is an F-spectral measure, but whether it is a spectral measure, is related to Fuglede's conjecture \cite{11}. In the following example we consider a spectral measure of this type.  
 \begin{example}\label{2.20}
Let $ \mu=\chi_{\left\{[0 , 1] \cup[2, 3]\right\}}dx $. The set of exponential functions $\{e_\lambda \; : \lambda \in \Lambda :=\mathbb{Z} \cup \mathbb{Z} +\frac{1}{4}\}$ is an orthogonal basis for $L^2(\mu)$ (see \cite{9}). We consider the probability measure $ \mu'=\dfrac{1}{2}\chi_{\left\{[0 , 1] \cup[2, 3]\right\}}dx $. Then for every $ f $ in $ L^2(\mu') $ we have $\sum_{\lambda \in \Lambda }|\left<f,e_\lambda\right>_{L^2(\mu')}|^2 = \| f\|_{L^2(\mu')}^2 $. In addition, for every $ f \in L^1(\mu') $, we have $ \|\{[f,e_\lambda]_{L^1(\mu')}\}_{\lambda \in \Lambda } \|_\infty \leq \| f \|_{L^1(\mu')} $. 
Now by applying the Riesz-Thorin interpolation theorem
 $ \sum_{\lambda \in \Lambda }|[f,e_\lambda]_{L^2(\mu')}|^q \leq \| f\|_{L^p(\mu')}^q$, for all $ f\in L^p(\mu') $, where $1\leq p \leq 2 $ and $ q $ is the conjugate exponent to $ p $. Hence, $ \nu=\sum_{\lambda \in \Lambda} \delta_\lambda $ is a $ (p,q) $-Bessel measure for $ \mu' $, especially $ \nu \in \mathcal{B}_{1}(\mu')_{(p, q)} $, where $1\leq p \leq 2 $ and $ q $ is the conjugate exponent to $ p $. Besides, $ \{e_\lambda\}_{\lambda \in \Lambda} $ is a $ q $-Bessel for $ L^p(\mu') $ with bound $ 1 $, where $1 < p \leq 2 $ and $ q $ is the conjugate exponent to $ p $.
\end{example} 
\begin{proposition}[\cite{20}] \label{3.18}
Let $ \mu(x) = \phi(x) dx $ be a compactly supported absolutely continuous probability measure. Then $ \mu $ is an F-spectral measure if and only if the density function $ \phi(x) $ is bounded above and below almost everywhere on the support (see also \cite{8}). 
\end{proposition}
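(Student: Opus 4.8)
The plan is to prove both implications by transferring the frame condition between the weighted space $L^2(\mu)$ and the unweighted space $L^2(S)$, where $S$ denotes the (compact, hence finite-measure) support of $\mu$, so that $\int_S \phi\,dx = 1$. The basic computation is that for any $f$ and any $\lambda$,
\[
\langle f,e_\lambda\rangle_{L^2(\mu)}=\int_S f(x)\,\overline{e_\lambda(x)}\,\phi(x)\,dx=\langle f\phi,e_\lambda\rangle_{L^2(S)},
\]
so the substitution $g=f\phi$ turns ``$E(\Lambda)$ is a frame for $L^2(\mu)$'' into the equivalent statement
\[
A\int_S\frac{|g|^2}{\phi}\,dx\le\sum_{\lambda\in\Lambda}\Bigl|\int_S g(x)e^{-2\pi i\lambda\cdot x}\,dx\Bigr|^2\le B\int_S\frac{|g|^2}{\phi}\,dx,
\]
valid for every $g$ with $\int_S |g|^2/\phi\,dx<\infty$; write $\hat g(\lambda)=\langle g,e_\lambda\rangle_{L^2(S)}$ for the middle terms. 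I will read off the boundedness of $\phi$ from this inequality by testing it against indicators and comparing with the unweighted norm $\int_S|g|^2\,dx$.

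For the ``if'' direction, assume $0<a\le\phi\le b<\infty$ on $S$. First I invoke Theorem~\ref{2.17} to obtain a discrete $\Lambda$ for which $E(\Lambda)$ is a frame for $L^2(S)$ with bounds $c|S|,C|S|$. Applying this frame to $g=f\phi\in L^2(S)$ and using the pointwise bounds $a\phi\le\phi^2\le b\phi$ (so that $a\|f\|_{L^2(\mu)}^2\le\|f\phi\|_{L^2(S)}^2\le b\|f\|_{L^2(\mu)}^2$), I obtain $ac|S|\,\|f\|_{L^2(\mu)}^2\le\sum_\lambda|\langle f,e_\lambda\rangle_{L^2(\mu)}|^2\le bC|S|\,\|f\|_{L^2(\mu)}^2$, so $E(\Lambda)$ is a frame for $L^2(\mu)$ and $\mu$ is $F$-spectral. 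This half is routine.

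For the ``only if'' direction I assume $E(\Lambda)$ is a frame for $L^2(\mu)$ and must produce a two-sided bound on $\phi$. The lower bound on $\phi$ comes out cleanly. Using the test vectors $f_\xi=e_\xi\chi_{B}$ (which satisfy $\|f_\xi\|_{L^2(\mu)}=\|\chi_B\|_{L^2(\mu)}$) in the Bessel bound shows $\#(\Lambda\cap B(\xi,\rho))$ is bounded uniformly in $\xi$; hence $\Lambda$ is relatively separated and $E(\Lambda)$ is automatically a Bessel system for $L^2(S)$ with some uniform bound $\beta_0$, i.e. $\sum_\lambda|\hat g(\lambda)|^2\le\beta_0\|g\|_{L^2(S)}^2$. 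Testing the displayed lower bound against $g=\chi_F$ with $F\subseteq\{\delta\le\phi<\epsilon\}$ gives
\[
\frac{A}{\epsilon}\,|F|\le A\int_F\phi^{-1}\,dx\le\sum_{\lambda}|\hat\chi_F(\lambda)|^2\le\beta_0|F|,
\]
whence $\epsilon\ge A/\beta_0$; letting $\delta\to0$ shows $\{0<\phi<A/\beta_0\}$ is null, i.e. $\phi\ge A/\beta_0$ a.e. on $S$, and the normalization $\int_S\phi=1$ with $|S|<\infty$ keeps the constants finite.

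The hard part is the upper bound on $\phi$, and I expect it to be the main obstacle. Through the displayed inequality, ``$\phi$ is essentially bounded above'' is in fact \emph{equivalent} to ``$E(\Lambda)$ satisfies a lower frame bound on the unweighted space $L^2(S)$,'' and this does not follow from the $L^2(\mu)$-frame property by soft manipulation: when I test against functions supported where $\phi$ is large, the weight $1/\phi$ makes the right-hand side collapse, and single-term estimates yield only the too-weak bound $|\{\phi>M\}|\le B/M$. To close this gap I would appeal to the quantitative density theory of Fourier frames (Landau-type necessary conditions): the frame lower bound for $L^2(\mu)$, together with the positive Lebesgue density of $S$, should force $\Lambda$ to have lower Beurling density at least $|S|$, which makes $E(\Lambda)$ a genuine frame for $L^2(S)$; comparing the two frame inequalities then pins $\phi$ between two positive constants. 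This density step is exactly where the substantive work, and the appeal to \cite{20}, resides.
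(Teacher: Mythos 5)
First, note that the paper contains no proof of Proposition \ref{3.18}: it is imported from Lai \cite{20} (see also \cite{8}), so your attempt has to be judged against the literature rather than against an argument in this paper. Two of your three steps are correct. The ``if'' direction works exactly as you describe: Theorem \ref{2.17} supplies a frame $E(\Lambda)$ for $L^2(S)$, and the identity $\langle f,e_\lambda\rangle_{L^2(\mu)}=\langle f\phi,e_\lambda\rangle_{L^2(S)}$ together with $a\|f\|_{L^2(\mu)}^2\le\|f\phi\|_{L^2(S)}^2\le b\|f\|_{L^2(\mu)}^2$ transfers it to $L^2(\mu)$. Your lower bound in the ``only if'' direction is also sound: applying the Bessel bound to $f=e_\xi$ gives $\sum_\lambda|\hat\mu(\lambda-\xi)|^2\le B$, and continuity of $\hat\mu$ with $\hat\mu(0)=1$ then makes $\Lambda$ relatively separated, hence Bessel in $L^2(S)$ with some bound $\beta_0$; testing the weighted inequality on indicators of $\{\delta\le\phi<\epsilon\}$ yields $\phi\ge A/\beta_0$ a.e.\ on $\{\phi>0\}$. (One caveat: your argument bounds $\phi$ on the essential support $\{\phi>0\}$, and that is in fact the only correct reading of the statement; for the closed support it is false, since $\phi$ may vanish on a positive-measure subset of $\mathrm{supp}\,\mu$ --- take $\phi=2\chi_U$ with $U$ open and dense in $[0,1]$, $|U|=1/2$ --- and such a $\mu$ is still F-spectral by Theorem \ref{2.17}.)

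The genuine gap is the last step of your upper bound, and it cannot be repaired along the lines you propose. Your chain is: lower frame bound for $L^2(\mu)$ $\Rightarrow$ $D^-(\Lambda)\ge|S|$ $\Rightarrow$ $E(\Lambda)$ is a frame for $L^2(S)$ $\Rightarrow$ $\phi\le B/\alpha$. The first and third arrows are fine (the first via Landau's necessary condition applied to $L^2(E)$ for $E\subseteq\{c\le\phi\le M\}$, where the weighted and unweighted norms are comparable, then letting $M\to\infty$; the third by the comparison you indicate). But the second arrow is false: Landau-type density conditions are \emph{necessary}, not sufficient, for the frame/sampling property, and outside the case of a single interval no density condition can be sufficient. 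Concretely, for $S=[0,1]\cup[2,3]$ --- the very support occurring in the paper's Example \ref{2.20} --- the set $\Lambda=\tfrac{1}{2}\mathbb{Z}$ has $D^-(\Lambda)=2=|S|$, yet $E(\Lambda)$ is not even complete in $L^2(S)$: every $f$ with $f(x+2)=-f(x)$ on $[0,1]$ satisfies $\hat f(\lambda)=0$ for all $\lambda\in\tfrac{1}{2}\mathbb{Z}$, because $\hat f(n/2)$ only sees the periodization $f(x)+f(x+2)$. Since the support of the measure in Proposition \ref{3.18} is an arbitrary compact set, this situation is exactly the one your argument must handle, and no strengthening of the density hypothesis fixes it: sets of equal (even large) Beurling density can differ in whether their exponentials form a frame. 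So what you have actually proved is the easy converse and the lower bound on $\phi$; the upper bound --- which is precisely the main theorem of \cite{20}, obtained there and in \cite{8} by a mechanism genuinely different from density counting --- remains open in your write-up.
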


\begin{corollary}
If the density function of a compactly supported absolutely continuous probability measure $ \mu $ is essentially bounded above and below on the support, then the following assertions hold.

(i) There exists an infinite  $ (p,q) $-Bessel measure $ \nu=\sum_{\lambda \in \Lambda_\mu} \delta_\lambda $ for $ \mu $, where $1\leq p \leq 2 $ and $ q $ is the conjugate exponent to $ p $. Moreover, when $ \mu $ is a spectral measure, we have $ \nu \in \mathcal{B}_1(\mu)_{p, q}$, where $1\leq p \leq 2 $ and $ q $ is the conjugate exponent to $ p $.

(ii) There exists a $ q $-Bessel $ \{e_\lambda\}_{\lambda \in \Lambda_{\mu}} $ for $ L^p(\mu) $, where $1< p \leq 2 $ and $ q $ is the conjugate exponent to $ p $. In addition, when $ \mu $ is a spectral measure,  $ \{e_\lambda\}_{\lambda \in \Lambda_{\mu}} $ is a $ q $-Bessel for $ L^p(\mu) $ with bound $ 1 $, where $1 < p \leq 2 $ and $ q $ is the conjugate exponent to $ p $.
\end{corollary}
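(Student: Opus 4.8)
\section*{Proof proposal}

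The plan is to deduce the entire statement from the single observation that, under the hypothesis, $\mu$ is an F-spectral measure, and then to feed the associated discrete measure through the interpolation results already established. First I would invoke Proposition \ref{3.18}: since the density $\phi$ is essentially bounded above and below on $\mathrm{supp}\,\mu$, the measure $\mu$ is an F-spectral measure. By the definition of an F-spectral measure there is a countable set $\Lambda_\mu \subset \mathbb{R}^d$ for which $E(\Lambda_\mu)=\{e_\lambda\}_{\lambda\in\Lambda_\mu}$ is a frame for $L^2(\mu)$, and hence by Remark \ref{2.5} the discrete measure $\nu=\sum_{\lambda\in\Lambda_\mu}\delta_\lambda$ is a frame measure for $\mu$. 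Since $\mu$ is absolutely continuous with compact support and density bounded below by a positive constant, $L^2(\mu)$ is infinite-dimensional; a frame for an infinite-dimensional space cannot be finite, so $\Lambda_\mu$ is infinite and $\nu$ is an infinite measure.

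For part (i) I would then apply the Corollary following Proposition \ref{3.9}, which states that any (Bessel or) frame measure for $\mu$ is automatically a $(p,q)$-Bessel measure whenever $1\leq p\leq 2$ and $q$ is conjugate to $p$; this gives the asserted infinite $(p,q)$-Bessel measure $\nu$. For the \emph{Moreover} clause I would additionally assume $\mu$ spectral, so that $E(\Lambda_\mu)$ is an orthonormal basis of $L^2(\mu)$. Then Parseval's identity reads $\sum_{\lambda}|[f,e_\lambda]_{L^2(\mu)}|^2=\|f\|_{L^2(\mu)}^2$, i.e. the analysis operator $T_\nu$ maps $L^2(\mu)$ into $\ell^2(\Lambda_\mu)$ with norm $1$, while $\|\{[f,e_\lambda]_{L^1(\mu)}\}_\lambda\|_\infty\leq\|f\|_{L^1(\mu)}$ always holds by Remark \ref{2.11}, so $T_\nu$ maps $L^1(\mu)$ into $\ell^\infty(\Lambda_\mu)$ with norm at most $1$. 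Interpolating between these endpoints exactly as in Example \ref{2.20} and Theorem \ref{2.19}, with $\frac1p=1-\frac{\theta}{2}$ and $\frac1q=\frac{\theta}{2}$, the Riesz-Thorin theorem yields a bound $\mathcal{C}\leq 1^{1-\theta}\cdot 1^{\theta}=1$, and therefore $\nu\in\mathcal{B}_1(\mu)_{p,q}$ throughout $1\leq p\leq 2$.

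Part (ii) would follow at once from the identity $\sum_{\lambda\in\Lambda_\mu}|[f,e_\lambda]_{L^p(\mu)}|^q=\int_{\mathbb{R}^d}|[f,e_t]_{L^p(\mu)}|^q\,d\nu(t)$ already used in Proposition \ref{3.2} and Lemma \ref{3.6}: the $(p,q)$-Bessel inequality for $\nu$ transcribes verbatim into the $q$-Bessel inequality for $\{e_\lambda\}_{\lambda\in\Lambda_\mu}$ with the identical bound, which equals $1$ in the spectral case by the computation just made. The restriction to $1<p\leq 2$ (as opposed to $1\leq p\leq 2$ in (i)) is forced by the definition of a $q$-Bessel, which requires $1<p,q<\infty$.

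I do not anticipate a genuine obstacle, since the result is essentially a repackaging of Proposition \ref{3.18}, Remark \ref{2.5}, and the interpolation arguments already in the paper. The only step requiring care is the spectral case of (i): one must verify that \emph{both} interpolation endpoints have operator norm at most $1$ --- Parseval giving exactly $1$ at $(2,2)$, and the elementary contraction giving at most $1$ at $(1,\infty)$ --- so that Riesz-Thorin delivers the sharp bound $1$ rather than merely a finite constant.
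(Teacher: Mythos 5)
Your proof is correct and takes essentially the same approach as the paper: Proposition \ref{3.18} produces the F-spectrum $\Lambda_\mu$, and the conclusion follows by Riesz--Thorin interpolation between the $(2,2)$ endpoint (frame bound, or Parseval bound $1$ in the spectral case) and the $(1,\infty)$ contraction endpoint, which is exactly what the paper invokes via the proof of Theorem \ref{2.19} and Example \ref{2.20}; your detour through the corollary to Proposition \ref{3.9} is that same interpolation argument packaged as a lemma, and part (ii) uses the same sum-equals-integral identity as Lemma \ref{3.6}. The only genuine addition is your explicit verification that $\Lambda_\mu$ must be infinite (since $L^2(\mu)$ is infinite-dimensional), a point the paper leaves implicit.
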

\begin{proof}
The conclusion follows from Proposition \ref{3.18} and the Riesz-Thorin interpolation theorem (see the proof of Theorem \ref{2.19} and also, see Example \ref{2.20}).
\end{proof}
By Proposition \ref{3.3}, if $ 1<p, q<\infty $ and $ \dfrac{1}{p} + \dfrac{1}{q} =1 $, then a fixed finite Borel measure $ \nu $ is a $(p, q)$- Bessel measure for every finite measure $ \mu $, especially $\nu \in \mathcal{B}_{\nu(\mathbb{R}^d)}(\mu)_{(p, q)} $ for all probability measures $ \mu $, and by Proposition \ref{3.2}, if $ \nu=\sum_{\lambda \in \Lambda} \delta_\lambda $ is a finite discrete $ (p,q) $-Bessel measure for $ \mu $, then there exists a finite $ q $-Bessel for $ L^p(\mu)$. In the following we give an example of a discrete spectral measure $ \mu $ such that it has a finite discrete $ (p,q) $-Bessel measure $ \nu $ with Bessel bound less than $ \nu(\mathbb{R}^d) $, precisely $ \nu \in \mathcal{B}_{1}(\mu)_{(p, q)} $, where $1\leq p \leq 2 $ and $ q $ is the conjugate exponent to $ p $.
\begin{example} \label{3.21} Consider the atomic measure $ \mu :=\frac{1}{2} (\delta_0 +\delta_\frac{1}{2}) $, the set $ \{e_l :\l \in L:=\{0,1\} \} $ is an orthonormal basis for $ L^2 (\mu) $. Hence $ \sum_{l \in L } |\left<f,e_l \right>_{L^2(\mu)}|^2 = \| f\|_{L^2(\mu)}^2 $  for all $ f\in L^2(\mu) $. Moreover, for every $ f $ in $ L^1(\mu) $ we have $ \|\{[f, e_l]_{L^1(\mu)}\}_{l\in L} \|_\infty \leq \| f \|_{L^1(\mu)} $. Now by applying the Riesz-Thorin interpolation theorem $ \sum_{l \in L } |[f,e_l]_{L^p(\mu)}|^q \leq \| f\|_{L^p(\mu)}^q $, for all $ f\in L^p(\mu) $, where $1\leq p \leq 2 $ and $ q $ is the conjugate exponent to $ p $. Therefore, $\{e_l\}_{l\in L} $ is a finite $ q $-Bessel for $ L^p(\mu)$ with bound 1, and $ \nu=\sum_{l \in L} \delta_l $ is a finite discrete $ (p,q) $-Bessel measure for $ \mu $, especially $ \nu \in \mathcal{B}_{1}(\mu)_{(p, q)} $, where $1\leq p \leq 2 $ and $ q $ is the conjugate exponent to $ p $. When $ p > 2 $ and $ q $ is the conjugate exponent to $ p $, based on Proposition \ref{3.3} $ \nu \in \mathcal{B}_{2}(\mu)_{(p, q)} $ and $\{e_l\}_{l\in L} $ is a finite $ q $-Bessel for $ L^p(\mu)$ with bound 2. 
\end{example}
\begin{proposition}[\cite{13}] \label{3.22}
Let $ \mu= \sum_{c\in C}p_c\delta_c $ be a discrete probability measure on $ \mathbb{R}^d $. $ \mu $ is an F-spectral measure with an F-spectrum $ \Lambda $ if and only if $ \#C<\infty $ and $ \#\Lambda<\infty $.
\end{proposition}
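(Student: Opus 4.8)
The plan is to prove the two implications separately, reading the biconditional as: $E(\Lambda)=\{e_\lambda\}_{\lambda\in\Lambda}$ is a frame for $L^2(\mu)$ exactly when both $C$ and $\Lambda$ are finite, with the backward direction understood existentially, i.e.\ as producing a finite $\Lambda$ once $\#C<\infty$. Throughout I would exploit that $\mu$ is purely atomic, so $L^2(\mu)$ carries the orthogonal coordinate basis of indicators $\chi_c$ of the atoms, with $\|\chi_c\|_{L^2(\mu)}^2=p_c$ and $\dim L^2(\mu)=\#C$.

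For the necessity direction, suppose $E(\Lambda)$ is a frame for $L^2(\mu)$ with bounds $A,B$. The point is that each $e_\lambda$ is a unit vector ($|e_\lambda|\equiv 1$ and $\mu$ is a probability measure), and that testing the frame inequalities on a single indicator $\chi_c$ is extremely rigid. A direct computation gives
\[
\langle \chi_c,e_\lambda\rangle_{L^2(\mu)}=e^{-2\pi i\lambda\cdot c}\,p_c,\qquad\text{hence}\qquad |\langle \chi_c,e_\lambda\rangle_{L^2(\mu)}|^2=p_c^2 .
\]
Thus $\sum_{\lambda\in\Lambda}|\langle\chi_c,e_\lambda\rangle|^2=(\#\Lambda)\,p_c^2$, and the upper (Bessel) bound forces $(\#\Lambda)\,p_c^2\le B\,p_c$, i.e.\ $\#\Lambda\le B/p_c$; since $p_c>0$, this already gives $\#\Lambda<\infty$. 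The lower bound gives $A\,p_c\le(\#\Lambda)\,p_c^2$, so $p_c\ge A/\#\Lambda$ for every atom $c$; summing over $c\in C$ against $\sum_c p_c=1$ yields $\#C\cdot(A/\#\Lambda)\le 1$, whence $\#C\le\#\Lambda/A<\infty$. (Alternatively, once $\#\Lambda<\infty$ one notes a frame is complete, so $\#\Lambda$ vectors span $L^2(\mu)$ and $\#C=\dim L^2(\mu)\le\#\Lambda$.)

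For the sufficiency direction, assume $\#C=N<\infty$, so $L^2(\mu)\cong\mathbb{C}^N$ is finite dimensional; there a finite family is a frame precisely when it spans, so it suffices to find finitely many $\lambda$ for which the vectors $(e^{2\pi i\lambda\cdot c})_{c\in C}$ span $\mathbb{C}^N$. Writing $C=\{c_1,\dots,c_N\}$, I would first choose $v\in\mathbb{R}^d$ with $v\cdot c_1,\dots,v\cdot c_N$ pairwise distinct (the bad $v$ fill a finite union of hyperplanes), and then a small $\alpha>0$ so that the numbers $z_k:=e^{2\pi i\alpha(v\cdot c_k)}$ remain pairwise distinct. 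Taking $\Lambda=\{0,\alpha v,2\alpha v,\dots,(N-1)\alpha v\}$ turns the matrix $\big(e^{2\pi i\lambda_j\cdot c_k}\big)=\big(z_k^{\,j-1}\big)$ into a Vandermonde matrix with distinct nodes, hence invertible; the $e_{\lambda_j}$ then span $L^2(\mu)$ and form a finite frame, so $\Lambda$ is an F-spectrum with $\#\Lambda=N<\infty$.

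The genuinely delicate point lies only in the backward direction: the literal reading ``every finite $\Lambda$ is an F-spectrum'' is false (take $\Lambda$ a single point against $\#C=2$), so the statement must be read existentially, and the real work is the explicit Vandermonde construction together with verifying the genericity of the choices of $v$ and $\alpha$. The forward direction, by contrast, is essentially immediate from evaluating the frame inequalities on the atomic indicators $\chi_c$, which is precisely where the purely atomic structure of $\mu$ does all the work.
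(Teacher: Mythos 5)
Your proof is correct, but there is nothing in the paper to compare it against: Proposition \ref{3.22} is stated with the citation \cite{13} and the paper supplies no proof of it at all, using it only as a black box in the corollary that follows. Your argument is therefore a genuine, self-contained addition rather than a variant of the paper's route. The forward direction---testing the frame inequalities on the atomic indicators $\chi_{\{c\}}$, for which $|\langle \chi_{\{c\}}, e_\lambda\rangle_{L^2(\mu)}|^2 = p_c^2$ for every $\lambda$, so the upper bound forces $\#\Lambda \le B/p_c < \infty$ and the lower bound forces $p_c \ge A/\#\Lambda$, hence $\#C \le \#\Lambda/A$ after summing against $\sum_{c} p_c = 1$---is exactly the rigidity phenomenon that makes the cited result true, and your computation of the inner products is right. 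The backward direction via a Vandermonde matrix (choose $v$ off the finitely many hyperplanes $v\cdot(c_j-c_k)=0$, scale by a small $\alpha>0$ so that the nodes $z_k=e^{2\pi i\alpha v\cdot c_k}$ remain pairwise distinct, and take $\Lambda$ to be an arithmetic progression along $v$) is a clean way to produce a finite spanning set of exponentials, hence a frame, in the finite-dimensional space $L^2(\mu)\cong\mathbb{C}^N$; the reduction of ``frame'' to ``spanning'' in finite dimensions is standard and correctly invoked. You are also right to flag that the biconditional as quoted is sloppy: finiteness of $C$ and $\Lambda$ cannot imply that a \emph{given} $\Lambda$ is an F-spectrum (a single exponential is never a frame for a two-atom measure), so the ``if'' direction must be read existentially---producing \emph{some} finite F-spectrum---which is what your construction delivers and what the source \cite{13} actually establishes.
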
 
\begin{corollary}
Let $ 1<p, q<\infty $ and $ \dfrac{1}{p} + \dfrac{1}{q} =1 $. If $ \mu $ is any probability measure, then we have the following two equivalent statements:

(i) A finite discrete measure $ \nu=\sum_{\lambda \in \Lambda} \delta_\lambda $ is a $ (p,q) $-Bessel measure for $ \mu $, precisely $\nu \in \mathcal{B}_{\nu(\mathbb{R}^d)}(\mu)_{(p, q)} $. If $ \mu= \sum_{c\in C}p_c\delta_c $ is an F-spectral measure, then there may exist a better $ (p,q) $-Bessel bound for $ \nu $, where $1< p \leq 2 $ and $ q $ is the conjugate exponent to $ p $. 

(ii) A finite sequence $ \{e_\lambda\}_{\lambda \in \Lambda} $ is a $ q $-Bessel for $ L^p(\mu) $ with bound $ \nu(\mathbb{R}^d) $. If $ \mu= \sum_{c\in C}p_c\delta_c $ is an F-spectral measure, then $ \{e_\lambda\}_{\lambda \in \Lambda} $ may admit a better $ q $-Bessel bound, where $1< p \leq 2 $ and $ q $ is the conjugate exponent to $ p $. 
\end{corollary}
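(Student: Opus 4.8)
The plan is to treat the equivalence of (i) and (ii) first, then establish the two quantitative claims by combining the general Bessel bound of Proposition~\ref{3.1} with the Riesz--Thorin interpolation theorem, exactly in the spirit of Theorem~\ref{2.19} and Example~\ref{3.21}. For the equivalence, I would note that for a finite discrete measure $\nu = \sum_{\lambda \in \Lambda} \delta_\lambda$ the defining integral of a $(p,q)$-Bessel measure collapses to a sum,
\[
\int_{\mathbb{R}^d} |[f, e_t]_{L^p(\mu)}|^q \, d\nu(t) = \sum_{\lambda \in \Lambda} |[f, e_\lambda]_{L^p(\mu)}|^q,
\]
as already used in Proposition~\ref{3.2} and Lemma~\ref{3.6}(i). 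Consequently the inequality defining a $(p,q)$-Bessel measure with bound $B$ is literally the inequality defining a $q$-Bessel for $L^p(\mu)$ with the same bound $B$; since $\nu(\mathbb{R}^d) = \#\Lambda$, statements (i) and (ii) are two readings of one inequality and transport their bounds to each other, so they are equivalent and the two ``better bound'' assertions correspond as well.

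For the general bound I would simply invoke Proposition~\ref{3.1}: as $\mu$ is a probability measure, $\mu(\mathbb{R}^d) = 1$, hence $\nu \in \mathcal{B}_{\mu(\mathbb{R}^d)\nu(\mathbb{R}^d)}(\mu)_{(p,q)} = \mathcal{B}_{\nu(\mathbb{R}^d)}(\mu)_{(p,q)}$, which is the first assertion of both (i) and (ii). For the sharpened bound under the F-spectral hypothesis, I would use Proposition~\ref{3.22} to get $\#C, \#\Lambda < \infty$, so that $\nu$ is finite and $\{e_\lambda\}_{\lambda \in \Lambda}$ is a frame for $L^2(\mu)$; writing $B$ for its upper frame bound yields $\nu \in \mathcal{B}_B(\mu)_{(2,2)}$, i.e. $T_\nu : L^2(\mu) \to L^2(\nu)$ has norm at most $B^{1/2}$. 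At the opposite endpoint Remark~\ref{2.11} gives $\nu \in \mathcal{B}_1(\mu)_{(1,\infty)}$, so $T_\nu : L^1(\mu) \to L^\infty(\nu)$ has norm at most $1$. Applying Riesz--Thorin to $T_\nu$ with $(p_0,q_0) = (1,\infty)$, $(p_1,q_1) = (2,2)$ and $\theta$ chosen by $\tfrac{1}{p} = 1 - \tfrac{\theta}{2}$, $\tfrac{1}{q} = \tfrac{\theta}{2}$ (so that $1 < p \le 2$), I would obtain $\nu \in \mathcal{B}_{\mathcal{C}^q}(\mu)_{(p,q)}$ with $\mathcal{C} \le 1^{1-\theta}(B^{1/2})^{\theta} = B^{\theta/2}$. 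In favorable cases, e.g. when $\mu$ is a genuine spectral measure so $B = 1$ (as in Example~\ref{3.21}, where $\mathcal{C}^q = 1 < 2 = \nu(\mathbb{R}^d)$), this interpolated bound is strictly smaller than $\nu(\mathbb{R}^d)$, which is the content of the ``may admit a better bound'' claims.

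The point requiring care is the bookkeeping of the endpoint constants fed into Riesz--Thorin: the $(2,2)$-Bessel bound $B$ is the \emph{square} of the operator norm of $T_\nu$, so the correct endpoint constant is $B^{1/2}$ rather than $B$, while the $(1,\infty)$ constant is $1$ from the trivial estimate $\|\widehat{fd\mu}\|_\infty \le \|f\|_{L^1(\mu)}$. I would also record that the endpoint $p = 2$ ($\theta = 1$) is covered directly by the $(2,2)$ estimate without interpolation, and that the hedging word ``may'' is genuinely needed, since the interpolated bound is only guaranteed to beat $\nu(\mathbb{R}^d)$ in such favorable (spectral or Parseval) situations rather than for an arbitrary F-spectral $\mu$.
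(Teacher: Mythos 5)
Your proposal is correct and follows essentially the same route as the paper, whose proof is simply a citation of Propositions \ref{3.3}, \ref{3.22}, \ref{3.2}, the Riesz--Thorin interpolation theorem, and Example \ref{3.21} --- precisely the ingredients you assemble (your use of Proposition \ref{3.1} in place of \ref{3.3} is immaterial, as the latter is just its symmetric restatement). Your explicit bookkeeping of the endpoint constants (using $B^{1/2}$ as the operator norm at the $(2,2)$ endpoint) matches what the paper does in the proof of Theorem \ref{2.19}, on which this corollary is modeled.
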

\begin{proof}
 The conclusion follows from Propositions \ref{3.3}, \ref{3.22}, \ref{3.2}, and the Riesz-Thorin interpolation theorem (see also Example \ref{3.21}).
\end{proof} 
\begin{theorem}[\cite{4}] \label{t.0}
Let $R$ be a $d \times d$ expansive integer matrix, $0 \in \mathcal{A} \subset \mathbb{Z}^d$. Let $ \mu_\mathcal{A} $ be an invariant measure associated to the iterated function system
\begin{equation*}
\tau_a (x) = R^{-1}(x + a) \quad (x \in \mathbb{R}^d, a \in\mathcal{A})
\end{equation*}
and the probabilities $(\rho_a)_{a\in \mathcal{A}}$. Then $ \mu $ has an infinite B-spectrum of positive Beurling dimension (Beurling dimension is used as a method of investigating existence of Bessel spectra for singular measures).
\end{theorem}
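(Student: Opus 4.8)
The plan is to work on the Fourier side, where the self-affine structure of $\mu:=\mu_{\mathcal A}$ becomes a scaling relation, and to realize the B-spectrum as a self-similar ``tree'' generated by a finite digit set. First I would record the refinement identity. Writing $m_{\mathcal A}(\xi)=\sum_{a\in\mathcal A}\rho_a\,e^{-2\pi i a\cdot\xi}$ for the mask (so $m_{\mathcal A}(0)=1$, $|m_{\mathcal A}|\le 1$, and, crucially, $m_{\mathcal A}$ is $\mathbb Z^{d}$-periodic because $\mathcal A\subset\mathbb Z^{d}$), the invariance $\mu=\sum_{a}\rho_a\,\mu\circ\tau_a^{-1}$ transforms into
\begin{equation*}
\widehat{\mu}(\xi)=m_{\mathcal A}(R^{-T}\xi)\,\widehat{\mu}(R^{-T}\xi),\qquad \widehat{\mu}(\xi)=\prod_{j\ge 1}m_{\mathcal A}((R^{-T})^{j}\xi),
\end{equation*}
the product converging since $R$, hence $R^{T}$, is expansive; here $R^{-T}:=(R^{T})^{-1}$. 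I would then invoke the standard Jorgensen--Pedersen characterization: a countable set $\Lambda$ is a B-spectrum for $\mu$ with bound $B$ exactly when
\begin{equation*}
Q_{\Lambda}(t):=\sum_{\lambda\in\Lambda}\bigl|\widehat{\mu}(t+\lambda)\bigr|^{2}\le B\qquad(t\in\mathbb R^{d}),
\end{equation*}
which converts the Bessel estimate into a pointwise bound tailor-made for the scaling relation.

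Next I would fix a finite set $B\subset\mathbb Z^{d}$ with $0\in B$ and $\#B\ge 2$ satisfying the sub-QMF inequality $\sum_{b\in B}\bigl|m_{\mathcal A}(R^{-T}(t+b))\bigr|^{2}\le 1$ for all $t$, chosen moreover inside a fundamental domain of $R^{T}\mathbb Z^{d}$ so that digit expansions are unique, and set
\begin{equation*}
\Lambda:=\bigl\{\,b_{0}+R^{T}b_{1}+\dots+(R^{T})^{n}b_{n}\ :\ n\ge 0,\ b_{0},\dots,b_{n}\in B\,\bigr\}\subset\mathbb Z^{d},
\end{equation*}
with truncations $\Lambda_{n}$. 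The Bessel bound would follow from a cascade argument: since $0\in B$ gives $\Lambda_{n}\subset\Lambda_{n+1}$ and $\Lambda_{n+1}=B+R^{T}\Lambda_{n}$, the periodicity of $m_{\mathcal A}$ together with the refinement identity yields
\begin{equation*}
Q_{\Lambda_{n+1}}(t)=\sum_{b\in B}\bigl|m_{\mathcal A}(R^{-T}(t+b))\bigr|^{2}\,Q_{\Lambda_{n}}(R^{-T}(t+b)).
\end{equation*}
Starting from $Q_{\Lambda_{0}}(t)=|\widehat{\mu}(t)|^{2}\le 1$ and feeding in the sub-QMF inequality, an induction gives $Q_{\Lambda_{n}}\le 1$ for every $n$; letting $n\to\infty$ (monotone convergence, as $\Lambda_{n}\uparrow\Lambda$) yields $Q_{\Lambda}\le 1$. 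Hence $\{e_{\lambda}\}_{\lambda\in\Lambda}$ is Bessel with bound $1$ and $\Lambda$ is an infinite B-spectrum for $\mu$.

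To see that $\Lambda$ has positive Beurling dimension, I would count points scale by scale. By the choice of $B$ the $(\#B)^{n}$ digit sums forming $\Lambda_{n}$ are distinct, and $\Lambda_{n}$ lies in a box of side comparable to $\|(R^{T})^{n}\|\,\max_{b}|b|$. Thus, for boxes of side $h\sim\|(R^{T})^{n}\|$, the upper Beurling density of order $s$ dominates
\begin{equation*}
\frac{(\#B)^{n}}{\|(R^{T})^{n}\|^{s}},
\end{equation*}
which, using $\|(R^{T})^{n}\|^{1/n}\to\rho(R)$, tends to $\infty$ for every $s<\log\#B/\log\rho(R)$. Therefore the Beurling dimension of $\Lambda$ is at least $\log\#B/\log\rho(R)$, a positive number since $\#B\ge 2$ and $1<\rho(R)<\infty$.

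The main obstacle is the very first choice: producing a digit set $B$ with $\#B\ge 2$ that obeys the sub-QMF inequality for an arbitrary $\mathcal A$ with $0\in\mathcal A$. The constraint is genuine and not a mere normalization: evaluating at $t=0$ and using $m_{\mathcal A}(0)=1$ forces $m_{\mathcal A}(R^{-T}b)=0$ for every $b\in B\setminus\{0\}$, so $B\setminus\{0\}$ must sit on the zero variety of the mask, and the inequality must then persist for all $t$; unlike the spectral case I do not need a full Hadamard triple (which need not exist for general $\mathcal A$), only a contractive family of mask translates, but exhibiting one still requires exploiting the arithmetic of $\mathcal A$ modulo $R^{T}\mathbb Z^{d}$. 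When no such $B$ is available I would fall back on the Beurling-dimension method indicated in the statement: first establish an upper regularity bound $\mu(B(x,r))\le C r^{\alpha}$ with $\alpha>0$ from the self-similarity of $\mu$, and then combine it with the density--Bessel correspondence (a Bessel sequence of exponentials and the upper Beurling density of its frequency set are mutually controlled) to certify a directly constructed frequency set of positive, density-controlled Beurling dimension. Convergence and nonvanishing of the infinite product for $\widehat{\mu}$, and the anisotropy of $(R^{T})^{n}$ when the eigenvalues of $R$ have different moduli (affecting only the exact value, not the positivity, of the dimension), are routine once $B$ is in hand.
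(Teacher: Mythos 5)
First, a point of reference: the paper itself contains no proof of this statement --- it is quoted as Theorem \ref{t.0} directly from \cite{4} and used as a black box in the proof of Theorem \ref{t.1}. So your attempt can only be measured against the argument of that cited source. Your skeleton (the refinement equation for $\widehat{\mu}$, the pointwise criterion $\sup_{t}\sum_{\lambda\in\Lambda}|\widehat{\mu}(t+\lambda)|^{2}\leq B$ for the Bessel property, a sub-QMF cascade over a digit tree $\Lambda_{n+1}=B+R^{T}\Lambda_{n}$, and a scale-by-scale count for the Beurling dimension) is indeed the standard skeleton for results of this type.

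The genuine gap is the one you flag yourself, and it is fatal rather than technical: the existence of the digit set $B$. Your cascade requires $B\subset\mathbb{Z}^{d}$, because pulling the translates $\lambda'\in\Lambda_{n}$ out of the mask uses the $\mathbb{Z}^{d}$-periodicity of $m_{\mathcal{A}}$; and, as you observe, evaluating the sub-QMF inequality at $t=0$ (or at $t=-b_{0}$ with $b_{0}\in B$, if one drops $0\in B$) forces $m_{\mathcal{A}}(R^{-T}b)=0$ for every nonzero $b$ (respectively, every nonzero difference of digits). But for the most basic instances covered by the theorem, the zero set of $m_{\mathcal{A}}$ is disjoint from $R^{-T}\mathbb{Z}^{d}$: for the middle-third Cantor measure ($d=1$, $R=3$, $\mathcal{A}=\{0,2\}$, equal weights) one has $|m_{\mathcal{A}}(\xi)|=|\cos(2\pi\xi)|$, whose zeros form the set $\frac{1}{4}+\frac{1}{2}\mathbb{Z}$, which misses $\frac{1}{3}\mathbb{Z}$ entirely (it would require $3(1+2k)=4j$, an odd number equal to an even one); the same happens for $\mathcal{A}=\{0,1\}$, $R=3$. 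So for precisely the fractal measures this theorem is designed for, no admissible integer $B$ with $\#B\geq 2$ exists and your primary construction produces nothing. This is why the proof in \cite{4} cannot proceed with lattice digits: the frequencies there must be built from the (generically non-lattice) zeros of the mask, which destroys exact periodicity, and the cascade has to be run over a finite family of shifted masks that is closed under the scaling relation and satisfies the sub-QMF bound simultaneously --- in effect a vector-valued version of your scalar inequality. That extra idea is the heart of the matter, and it is exactly what is absent here.

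Your fallback does not repair this. The density--Bessel correspondence you invoke (upper regularity of $\mu$ versus upper Beurling density of frequency sets, as in \cite{3}) runs in the wrong direction: it gives necessary conditions on a set already known to be a Bessel spectrum, and so can bound the dimension of Bessel spectra from above, but it cannot certify that a directly constructed set is Bessel. As written, your argument proves the theorem only for those special digit sets $\mathcal{A}$ admitting integral sub-QMF companions (Hadamard-type cases such as $\mu_{4}$), not for the general $\mathcal{A}\subset\mathbb{Z}^{d}$ asserted in the statement.
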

\begin{theorem} \label{t.1}
 Any fractal measure $ \mu $ obtained from an affine iterated function system has an infinite discrete $(p,q)$-Bessel measure $ \nu $, where $1\leq p \leq 2 $ and $ q $ is the conjugate exponent to $ p $.
\end{theorem}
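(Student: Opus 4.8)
The plan is to follow the interpolation scheme of Theorem \ref{2.19} almost verbatim, replacing the frame input of Theorem \ref{2.17} by the Bessel input of Theorem \ref{t.0}. The one extra ingredient is a normalization: Theorem \ref{t.0} requires $0 \in \mathcal{A}$, whereas a general affine IFS only has $\mathcal{A} \subset \mathbb{Z}^d$. So I would first fix any $a_0 \in \mathcal{A}$, pass to the translated digit set $\tilde{\mathcal{A}} = \mathcal{A} - a_0 \ni 0$, and observe that the invariant measure $\tilde\mu$ of the IFS with digits $\tilde{\mathcal{A}}$ (and the same weights) is a translate of $\mu$: tracing the attractor expansions $\sum_{k \geq 1} R^{-k} a_k$ shows that $\tilde\mu$ is the pushforward of $\mu$ under $x \mapsto x - c$ with $c = (R-I)^{-1}a_0$ (the matrix $R - I$ is invertible because $R$ is expanding). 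Since $V f := f(\cdot - c)$ is a surjective isometry $L^2(\tilde\mu) \to L^2(\mu)$ with $|\langle Vf, e_\lambda\rangle_{L^2(\mu)}| = |\langle f, e_\lambda\rangle_{L^2(\tilde\mu)}|$ for every $\lambda$, any B-spectrum of $\tilde\mu$ is a B-spectrum of $\mu$ with the same bound. This reduces everything to the case $0 \in \mathcal{A}$.

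With $0 \in \mathcal{A}$, Theorem \ref{t.0} furnishes an infinite B-spectrum $\Lambda$ for $\mu$, that is, a constant $B > 0$ with
\[
\sum_{\lambda \in \Lambda} |\langle f, e_\lambda\rangle_{L^2(\mu)}|^2 = \int_{\mathbb{R}^d} |\widehat{fd\mu}(t)|^2 \, d\nu(t) \leq B \| f\|_{L^2(\mu)}^2, \qquad \nu := \sum_{\lambda \in \Lambda}\delta_\lambda .
\]
Thus $\nu \in \mathcal{B}_B(\mu)_{(2,2)}$, i.e. $T_\nu f = \widehat{fd\mu}$ maps $L^2(\mu) \to L^2(\nu)$ boundedly. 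At the opposite endpoint, $\nu$ is discrete, hence $\sigma$-finite, so by Remark \ref{2.11} we have $\|\widehat{fd\mu}\|_{L^\infty(\nu)} \leq \|f\|_{L^1(\mu)}$, giving $\nu \in \mathcal{B}_1(\mu)_{(1,\infty)}$ and boundedness of $T_\nu : L^1(\mu) \to L^\infty(\nu)$.

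Interpolating $T_\nu$ between $(p_0,q_0) = (1,\infty)$ and $(p_1,q_1)=(2,2)$ by the Riesz-Thorin theorem, for $\theta \in (0,1)$ the exponents $\tfrac{1}{p} = 1 - \tfrac{\theta}{2}$, $\tfrac{1}{q} = \tfrac{\theta}{2}$ sweep out all conjugate pairs with $1 < p < 2$, and the interpolated norm $\mathcal{C} \leq B^{\theta/2}$ is finite. Hence, as in Lemma \ref{3.6},
\[
\sum_{\lambda \in \Lambda} |[f, e_\lambda]_{L^p(\mu)}|^q = \int_{\mathbb{R}^d} |\widehat{fd\mu}(t)|^q \, d\nu(t) \leq \mathcal{C}^q \| f\|_{L^p(\mu)}^q \qquad (f \in L^p(\mu)),
\]
so $\nu$ is a $(p,q)$-Bessel measure for every $1 < p < 2$, while the endpoints $p=1$ and $p=2$ are already secured by the previous paragraph. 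As $\Lambda$ is infinite, $\nu = \sum_{\lambda\in\Lambda}\delta_\lambda$ is an infinite discrete $(p,q)$-Bessel measure for $\mu$ on the whole range $1 \leq p \leq 2$, which is the claim.

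I expect the only genuine obstacle to be the normalization step, namely confirming that a generic affine IFS can be translated into the form $0 \in \mathcal{A}$ demanded by Theorem \ref{t.0} while transporting the $L^2$ Bessel estimate back to the original $\mu$ unchanged. Once that isometric transfer is in place, the remaining argument is a direct transcription of the proof of Theorem \ref{2.19}.
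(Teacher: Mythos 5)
Your proposal is correct and follows essentially the same route as the paper's proof: interpolate via Riesz--Thorin between the $L^2$ Bessel estimate supplied by Theorem \ref{t.0} and the trivial $(1,\infty)$ bound of Remark \ref{2.11}, exactly as in Theorem \ref{2.19}. The normalization step you add is in fact absent from the paper, whose proof simply begins by assuming $0\in\mathcal{A}$; your translation argument (pushforward under $x\mapsto x-(R-I)^{-1}a_0$, with the modulus of the Fourier coefficients preserved) correctly fills that small gap and makes the statement genuinely cover all affine IFS digit sets $\mathcal{A}\subset\mathbb{Z}^d$.
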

\begin{proof}
Suppose that $R$ is a $d \times d$ expansive integer matrix, $0 \in \mathcal{A} \subset \mathbb{Z}^d$. If $ \mu_\mathcal{A} $ is an invariant measure associated to the iterated function system
\begin{equation*}
\tau_a (x) = R^{-1}(x + a) \quad (x \in \mathbb{R}^d, a \in\mathcal{A})
\end{equation*}
and the probabilities $(\rho_a)_{a\in \mathcal{A}}$, then according to Theorem \ref{t.0} there exist an infinite subset $ \Lambda $ of $ \mathbb{R}^d $ and a constant $ B > 0 $ such that 
\begin{equation*}
 \sum_{\lambda \in \Lambda }|\left<f,e_\lambda \right>_{L^2(\mu_\mathcal{A})}|^2 \leq B\| f\|_{L^2(\mu_\mathcal{A})}^2 \quad \text{for all }  f\in L^2(\mu_\mathcal{A}). 
\end{equation*} 
We also have $ \| \{[f,e_\lambda]_{L^1(\mu_\mathcal{A})}\}_{\lambda \in \Lambda} \|_\infty \leq \| f \|_{L^1(\mu_\mathcal{A})} $, for every $ f \in L^1(\mu_\mathcal{A}) $.
Now if $ \dfrac{1}{p} = 1 -\dfrac{\theta}{2} ; \ \dfrac{1}{q} =\dfrac{\theta}{2}$, for $ 0 <\theta < 1 $ (i.e., $1< p < 2 $ and $ q $ is the conjugate exponent to $ p $), then the Riesz-Thorin interpolation theorem yields
\begin{equation*}
 \sum_{\lambda \in \Lambda }|[f,e_\lambda]_{L^p(\mu_\mathcal{A})}|^q \leq B'^q\| f\|_{L^p(\mu_\mathcal{A})}^q \quad \text{for all }  f\in L^p(\mu_\mathcal{A}), 
\end{equation*}
where $ B'\leq B^{\frac{1}{2}\theta} $. Thus, $ \nu=\sum_{\lambda \in \Lambda} \delta_\lambda $ is a $(p,q)$-Bessel measure for $ \mu_\mathcal{A} $, and $ \nu \in \mathcal{B}_{B'^q}(\mu_\mathcal{A})_{(p, q)} $, where $1 < p < 2 $ and $ q $ is the conjugate exponent to $ p $. Moreover, we have $ \nu \in \mathcal{B}_{B}(\mu_\mathcal{A})_{(2, 2)} $ and $ \nu \in \mathcal{B}_{1}(\mu_\mathcal{A})_{(1, \infty)} $. On the other hand for every $1 < p < 2 $ and $ q $ (the conjugate exponent to $ p $), $ \{e_\lambda\}_{\lambda \in \Lambda} $ is a $ q $-Bessel for $ L^p(\mu_\mathcal{A}) $ with bound $ B'^q $.
\end{proof}
If a measure $ \mu $ is an F-spectral measure, then it must be of pure type, i.e, $ \mu $ is either discrete, singular continuous or absolutely continuous \cite{19, 13}. The case when the measure $ \mu $ is singular continuous, is not precisely known. The first known example of a singular continuous spectral measure supported on a non-integer dimension set (a fractal measure), was given by Jorgensen and Pedersen \cite{16}. They showed that the measure $ \mu_4 $ (the Cantor measures supported on Cantor set of $1/4$ contraction), is spectral. A spectrum of $\mu_4 $ is
$ \Lambda =\left \{ \sum_{m=0}^k 4^m d_m \; :  d_m \in \{0, 1\}, k\in \mathbb{N} \right\} $. They also showed that $ \mu_{2k} $ (the Cantor measures with even contraction ratio) is spectral, but $ \mu_{2k +1} $ (the Cantor measures with odd contraction ratio) is not.
\begin{remark}
 Since Cantor type measures are fractal measures, by applying Theorem~\ref {t.1} one can obtain that every Cantor type measure $ \mu $ admits a $ (p,q) $-Bessel measure $ \nu=\sum_{\lambda \in \Lambda_{ \mu } } \delta_\lambda $, where $1\leq p \leq 2 $ and $ q $ is the conjugate exponent to $ p $. Moreover, for every spectral Cantor type measure $ \mu_{2k} $, we have $ \nu \in \mathcal{B}_1(\mu_{2k})_{p, q}$, where $1\leq p \leq 2 $ and $ q $ is the conjugate exponent to $ p $.   
\end{remark}
In \cite{21} the author presents a method for constructing many examples of continuous measures $ \mu $ (including fractal ones) which have
components of different dimensions, but nevertheless they are F-spectral measures. In the following we give some results from \cite{21}. By applying the Riesz-Thorin interpolation theorem, one can obtain infinite discrete $ (p,q) $-Bessel measures $ \nu=\sum_{\lambda \in \Lambda_{\mu}} \delta_\lambda $ (where $1\leq p \leq 2 $ and $ q $ is the conjugate exponent to $ p $), for such F-spectral measures $ \mu $.
\begin{definition}[\cite{21}] Let $ \mu $ and $ \mu' $ be positive and finite measures on $ \mathbb{R}^n $ and $ \mathbb{R}^m $, respectively. A \emph{mixed type measure} $ \rho $ is a measure which is constructed on $ \mathbb{R}^{n+m} = \mathbb{R}^n \times \mathbb{R}^m $ and defined by
\begin{equation*}
\rho = \mu \times \delta_0 + \delta_0 \times \mu', 
\end{equation*}
where $ \delta_0 $ denotes the Dirac measure at the origin. Equivalently, the measure $ \rho $ may be defined by the requirement that 
\begin{equation*}
\int _{\mathbb{R}^n \times \mathbb{R}^m} f(x, y) d\rho (x, y) = \int _{\mathbb{R}^n} f(x, 0) d\mu(x) + \int _{\mathbb{R}^m} f(0, y) d\mu'(y),
\end{equation*}
for every continuous, compactly supported function $ f $ on $ \mathbb{R}^n \times \mathbb{R}^m $.
\end{definition}
\begin{theorem}[\cite{21}]
Let $ \mu $ and $ \mu' $ be continuous F-spectral measures. Then the mixed type measure $ \rho = \mu \times \delta_0 + \delta_0 \times \mu' $ is also an F-spectral measure.
\end{theorem}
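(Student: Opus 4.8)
The plan is to realize $L^2(\rho)$ as an orthogonal sum of $L^2(\mu)$ and $L^2(\mu')$ and then to glue Fourier frames of the two pieces into a single Fourier frame for $\rho$. Since $\mu$ and $\mu'$ are continuous, the point $\{0\}$ is null for each, so the intersection $(0,0)$ of the two coordinate axes is $\rho$-null; hence a class in $L^2(\rho)$ is freely encoded by the pair $(g,h)$, with $g(x)=f(x,0)\in L^2(\mu)$ and $h(y)=f(0,y)\in L^2(\mu')$, and $\|f\|_{L^2(\rho)}^2=\|g\|_{L^2(\mu)}^2+\|h\|_{L^2(\mu')}^2$. The defining identity for $\rho$ gives, for every frequency $(s,t)\in\mathbb{R}^n\times\mathbb{R}^m$,
\begin{equation*}
\langle f,e_{(s,t)}\rangle_{L^2(\rho)}=\widehat{g\,d\mu}(s)+\widehat{h\,d\mu'}(t),
\end{equation*}
so each Fourier coefficient couples one $\mu$-coefficient to one $\mu'$-coefficient. (Normalising $\rho$ to a probability measure changes nothing, as scaling only rescales the frame bounds.) Being compactly supported probability measures, $\mu$ and $\mu'$ carry Fourier frames $E(\Lambda)=\{e_{\lambda_k}\}$ and $E(\Gamma)=\{e_{\gamma_k}\}$ by definition of F-spectrality.

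For the Bessel bound I would fix finite shift sets $\{a_j\}\subset\mathbb{R}^n$, $\{b_j\}\subset\mathbb{R}^m$ and take $\Theta$ to consist, for each index $k$, of the small ``cross'' of frequencies $(\lambda_k,\gamma_k)$, $(\lambda_k+a_j,\gamma_k)$ and $(\lambda_k,\gamma_k+b_j)$. Because multiplication by $e_{-a}$ is an isometry of $L^2(\mu)$, each translate $\{e_{\lambda+a}\}_{\lambda\in\Lambda}$ is again a frame for $L^2(\mu)$ with the same bounds; hence the first coordinates of $\Theta$, appearing with bounded multiplicity, form a finite union of Bessel sets for $\mu$, and symmetrically the second coordinates for $\mu'$. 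Applying $|a+b|^2\le 2|a|^2+2|b|^2$ to the coefficient identity then gives $\sum_{\theta\in\Theta}|\langle f,e_\theta\rangle_{L^2(\rho)}|^2\le B(\|g\|^2+\|h\|^2)$, i.e. $E(\Theta)$ is Bessel for $L^2(\rho)$.

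The lower bound is the heart, and the mechanism is to decouple $g$ and $h$ by differencing along a shared coordinate. Since $(\lambda_k,\gamma_k)$ and $(\lambda_k+a_j,\gamma_k)$ share $\gamma_k$, the $\mu'$-parts cancel:
\begin{equation*}
\langle f,e_{(\lambda_k,\gamma_k)}\rangle_{L^2(\rho)}-\langle f,e_{(\lambda_k+a_j,\gamma_k)}\rangle_{L^2(\rho)}=\widehat{g\,d\mu}(\lambda_k)-\widehat{g\,d\mu}(\lambda_k+a_j),
\end{equation*}
and symmetrically differencing along a shared first coordinate isolates $h$. These differences arise from the analysis vector $(\langle f,e_\theta\rangle)_{\theta\in\Theta}$ through fixed bounded operators $S_g,S_h$ on $\ell^2(\Theta)$, so $\|(\langle f,e_\theta\rangle)\|_{\ell^2}^2\ge\|S_g\|^{-2}\|S_g(\langle f,e_\theta\rangle)\|_{\ell^2}^2$, and it suffices to prove that the difference systems are themselves frames for $L^2(\mu)$ and $L^2(\mu')$: the right-hand sides would then control $\|g\|^2$ and $\|h\|^2$ separately, yielding $\sum_\theta|\langle f,e_\theta\rangle|^2\gtrsim\|g\|^2+\|h\|^2=\|f\|^2_{L^2(\rho)}$. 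Using $\widehat{g\,d\mu}(\lambda)-\widehat{g\,d\mu}(\lambda+a)=\widehat{(g(1-e_{-a}))\,d\mu}(\lambda)$ together with the frame property of $E(\Lambda)$, the $g$-difference energy equals, up to constants, $\int_{\mathbb{R}^n}|g(x)|^2\sum_j|1-e^{-2\pi i a_j\cdot x}|^2\,d\mu(x)$, and similarly for $h$.

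The main obstacle is exactly this weighted estimate. The weight $\sum_j|1-e^{-2\pi i a_j\cdot x}|^2$ is bounded, and a finite family of shifts can keep it bounded below on $\mathrm{supp}\,\mu$ outside any neighbourhood of the origin; but it unavoidably vanishes at $x=0$, the glueing point of the two axes, and enlarging the shift family to repair this would destroy the bounded multiplicity on which the Bessel bound rests. This is precisely where the continuity of $\mu$ and $\mu'$ must enter: the origin is null, so the degenerate low-frequency contribution is negligible for each fixed function, and I expect the uniform lower bound to come from upgrading this to a quantitative statement about the full analysis operator — showing that the direct coefficients together with the differences leave no approximate kernel and handle the entangled near-origin (DC) directions by a combined estimate rather than by differencing alone. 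Turning this qualitative non-degeneracy into a uniform frame bound, while keeping $\Theta$ Bessel, is the step I expect to demand the most care.
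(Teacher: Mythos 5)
Your reduction is sound as far as it goes: the identification of $L^2(\rho)$ with $L^2(\mu)\oplus L^2(\mu')$ (continuity enters there only to make the point $(0,0)$ a $\rho$-null set, so that $g$ and $h$ are unconstrained), the coefficient identity $\langle f,e_{(s,t)}\rangle_{L^2(\rho)}=\widehat{g\,d\mu}(s)+\widehat{h\,d\mu'}(t)$, and the Bessel half via translated copies of $\Lambda,\Gamma$ with bounded multiplicity are all correct. But the theorem \emph{is} the lower bound, and for it your proposal contains no proof --- only an accurate diagnosis of why your own mechanism fails. The failure is structural, not a matter of ``care.'' For any fixed finite shift sets, the differenced energy is comparable to $\int|g|^2w\,d\mu+\int|h|^2w'\,d\mu'$ with $w(x)=\sum_j|1-e^{-2\pi i a_j\cdot x}|^2$ continuous and $w(0)=0$. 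Continuity of $\mu$ forbids atoms but does not keep mass away from the origin: the hypotheses allow $0\in\operatorname{supp}\mu$ with $\mu(U)>0$ for every neighbourhood $U$ of $0$ (e.g.\ Lebesgue measure on $[0,1]^n$). Hence for every $\epsilon>0$ there are unit-norm functions $g$ supported where $w<\epsilon$, and the difference system recovers at most $\epsilon B\|g\|^2$; no uniform constant exists. A frame bound must hold uniformly over the unit ball, whereas ``$\{0\}$ is $\mu$-null'' yields only per-function, non-uniform smallness, so the ``upgrade'' you defer to is not a refinement of your estimate but a replacement of it. For such concentrated $g$ (and likewise concentrated $h$) the only remaining data are the entangled sums $\widehat{g\,d\mu}(\lambda_k)+\widehat{h\,d\mu'}(\gamma_k)$ attached to an arbitrary pairing of $\Lambda$ with $\Gamma$, and you give no argument that these control $\|g\|^2+\|h\|^2$; your final paragraph concedes exactly this point. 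So the decisive idea is missing.

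For calibration: the paper you are being compared against does not prove this statement at all --- it is quoted from Lev \cite{21} --- so the comparison is really with the proof there. What that proof has, and your plan lacks, is a use of continuity that is quantitative and uniform over the unit ball of $L^2$: for a finite continuous compactly supported measure one has $\sup_x\mu\bigl(\mathsf{B}(x,\delta)\bigr)\to0$ as $\delta\to0$, equivalently Wiener-type statements that suitable Fourier-side averages of $\widehat{h\,d\mu'}$ are small uniformly in $\|h\|_{L^2(\mu')}\le1$; the frame for $\rho$ is then built so that this uniform smallness, rather than a pointwise vanishing difference weight, decouples the two components. Any completed proof along your lines would have to import an ingredient of that kind; as written, the proposal is an honest reduction plus an open problem at its core.
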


\begin{theorem}[\cite{21}]
 If $ \mu $ is the sum of the $ k $-dimensional area measure on $[0, 1]^k \times \{0\}^{d-k}$, and the\\
  $j$-dimensional area measure on $\{0\}^{d-j} \times [0, 1]^j$ where $1 \leq j, k \leq d - 1$, then $ \mu $ is an F-spectral measure.
\end{theorem}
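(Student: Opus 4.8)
The plan is to exhibit $\mu$ as a tensor product of a manifestly spectral measure with a genuine \emph{mixed type} measure, so that the preceding theorem on mixed type measures carries the essential content. Write $\mu=\mu_1+\mu_2$, where $\mu_1$ is the $k$-dimensional Lebesgue (area) measure on $F_1=[0,1]^k\times\{0\}^{d-k}$ and $\mu_2$ is the $j$-dimensional Lebesgue measure on $F_2=\{0\}^{d-j}\times[0,1]^j$. The intersection $F_1\cap F_2$ is at most $(k+j-d)$-dimensional, whereas $\mu_1,\mu_2$ are $k$- and $j$-dimensional, so the two pieces are mutually singular and $\mu$ is a well-defined finite measure (of total mass $2$; dividing by $2$ makes it a compactly supported probability measure, and this positive rescaling leaves F-spectrality unchanged since it multiplies both sides of the frame inequality by the same constant).

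First I would sort the coordinates $\{1,\dots,d\}$ into four blocks according to where they are active: let $P$ be the coordinates that move only in $F_1$, let $S$ be those that move only in $F_2$, let $Q$ be those that move in both faces, and let $W$ be those pinned to $0$ in both. A direct count gives $|P|=\min(k,d-j)\ge 1$ and $|S|=\min(j,d-k)\ge 1$ (this is exactly where the hypothesis $1\le j,k\le d-1$ is used), while at most one of $Q,W$ is nonempty: $Q$ appears precisely when $k+j>d$ and $W$ precisely when $k+j<d$. Since a permutation of coordinates carries each exponential $e_\lambda$ to an exponential and hence sends Fourier frames to Fourier frames, I may assume the blocks are consecutive, in the order $P,Q,S,W$.

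With this ordering, writing $m_B$ for Lebesgue measure on the unit cube $[0,1]^B$ in the coordinates $B$ and $\delta_0^{B}$ for the point mass at the origin of those coordinates,
\[
\mu_1 = m_P\otimes m_Q\otimes\delta_0^{S}\otimes\delta_0^{W},
\qquad
\mu_2 = \delta_0^{P}\otimes m_Q\otimes m_S\otimes\delta_0^{W}.
\]
Factoring out the common tensor factor $\sigma:=m_Q\otimes\delta_0^{W}$ then gives
\[
\mu=\sigma\otimes\rho_0,
\qquad
\rho_0:=m_P\otimes\delta_0^{S}+\delta_0^{P}\otimes m_S,
\]
and $\rho_0$ is \emph{exactly} a mixed type measure on $\mathbb{R}^{|P|}\times\mathbb{R}^{|S|}$ built from $m_P$ and $m_S$. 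Now $m_P$ and $m_S$ are continuous spectral measures (with spectra $\mathbb{Z}^{|P|}$ and $\mathbb{Z}^{|S|}$), in particular continuous F-spectral, so the mixed type theorem above shows $\rho_0$ is F-spectral. The factor $\sigma=m_Q\otimes\delta_0^{W}$ is spectral, being a tensor product of the spectral cube measure $m_Q$ and the trivially spectral point mass $\delta_0^{W}$. Finally I would invoke the tensor-product stability of Fourier frames: if $E(\Lambda_1)$ is a frame for $L^2(\nu_1)$ and $E(\Lambda_2)$ is a frame for $L^2(\nu_2)$, then $\{e_{(\lambda_1,\lambda_2)}:\lambda_1\in\Lambda_1,\ \lambda_2\in\Lambda_2\}$ is a frame for $L^2(\nu_1\times\nu_2)$ (the bounds multiply; with an orthonormal basis on one factor the bounds simply transfer). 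Applying this to $\mu=\sigma\otimes\rho_0$, a product of a spectral and an F-spectral measure, yields that $\mu$ is F-spectral, which is the assertion.

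The routine parts here are the coordinate bookkeeping, the permutation invariance, and the normalization; the genuinely load-bearing point is the separation of the two faces. When $k+j\le d$ the faces already sit on complementary coordinate subspaces, $Q$ is empty, and $\mu=\sigma\otimes\rho_0$ with $\sigma=\delta_0^{W}$ a point mass, so the mixed type theorem essentially applies verbatim. The hard part will be the overlapping regime $k+j>d$: there $F_1$ and $F_2$ meet in a positive-dimensional cube, so $\mu$ is \emph{not} itself of mixed type and the cited theorem cannot be applied directly. The role of peeling off $\sigma=m_Q$ is precisely to remove these shared directions and reduce to the complementary (mixed type) case, after which the tensor-product stability of Fourier frames closes the argument; the one step I would verify most carefully is that this stability supplies uniform frame bounds, though no new analytic difficulty arises beyond the cited results.
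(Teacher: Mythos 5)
The paper contains no proof of this statement: it is quoted verbatim from \cite{21}, so there is no internal argument to compare yours against; the only fair test is whether your derivation from the paper's previously quoted mixed-type theorem is sound, and it is. Your coordinate bookkeeping is correct ($|P|=\min(k,d-j)\ge 1$ and $|S|=\min(j,d-k)\ge 1$ precisely because $1\le j,k\le d-1$, with $Q$ nonempty exactly when $k+j>d$ and $W$ exactly when $k+j<d$); the factorization $\mu=\sigma\otimes\rho_0$ with $\sigma=m_Q\otimes\delta_0^{W}$ and $\rho_0=m_P\otimes\delta_0^{S}+\delta_0^{P}\otimes m_S$ is legitimate since product measures distribute over sums of measures; $m_P$ and $m_S$ are continuous spectral measures, so the mixed-type theorem applies to $\rho_0$; and both coordinate permutations and positive scalar multiples preserve F-spectrality (the latter is exactly Proposition \ref{3.13} of the paper, which justifies your normalization of the total mass $2$). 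The one ingredient the paper does not supply is the tensor-product stability of Fourier frames, which you rightly single out as the step to verify: it is standard and routine, since under $L^2(\nu_1\times\nu_2)\cong L^2(\nu_1)\otimes L^2(\nu_2)$ the frame operator of $\{e_{\lambda_1}\otimes e_{\lambda_2}\}$ is $S_1\otimes S_2$, and positivity gives $A_1A_2 I\le S_1\otimes S_2\le B_1B_2 I$; in your application one factor is even an orthonormal basis ($A_1=B_1=1$), so the bounds of $\rho_0$ transfer unchanged. Your identification of the overlapping regime $k+j>d$ as the real content is also apt: when $k+j\le d$ the measure is already of mixed type up to a trivial factor, and peeling off the shared cube directions $m_Q$ is precisely what reduces the general case to the cited theorem. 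In short, the proposal is a correct and complete reconstruction of the result the paper leaves to \cite{21}, and it proceeds along the natural (and, as far as the cited source goes, the intended) route.
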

The following theorem provides many examples of single dimensional measures which are F-spectral measures:
\begin{theorem}[\cite{21}]
Let $ \phi: \mathbb{R}^k \rightarrow \mathbb{R}^{d-k} $ be a smooth function $(1 \leq k \leq d - 1)$. If $ \mu $ is the k-dimensional area measure on a compact subset of the graph $\{(x, \phi(x))  : x \in \mathbb{R}^k\}$ of $ \phi $, then $ \mu $ is an F-spectral measure.
\end{theorem}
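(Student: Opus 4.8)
The plan is to pull the problem back to the base space $\mathbb{R}^k$ through the graph parametrization and then invoke Proposition~\ref{3.18}. Write a point of the graph as $\Phi(x)=(x,\phi(x))$ for $x\in\mathbb{R}^k$, and let $K\subset\mathbb{R}^k$ be the compact set for which $\Gamma=\{\Phi(x):x\in K\}$ is the prescribed compact subset of the graph. Since $\Phi$ is a smooth embedding, the area formula gives
\begin{equation*}
\int_\Gamma f\,d\mu=\int_K f(\Phi(x))\,J(x)\,dx,\qquad J(x)=\sqrt{\det\!\big(I_k+D\phi(x)^{T}D\phi(x)\big)},
\end{equation*}
so that $Uf=f\circ\Phi$ defines a unitary map $U:L^2(\mu)\to L^2(K,J\,dx)$. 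The first step is simply to record this identification and to track its effect on exponentials.

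The key observation is how $U$ acts on $e_\lambda$. Writing $\lambda=(\lambda',\lambda'')\in\mathbb{R}^k\times\mathbb{R}^{d-k}$, one computes $U e_\lambda(x)=e^{2\pi i(\lambda'\cdot x+\lambda''\cdot\phi(x))}$; in particular, restricting to frequencies of the form $\lambda=(\lambda',0)$ collapses these ``chirped'' exponentials back to the plain exponentials $e_{\lambda'}$ over the base. Hence it suffices to produce a countable $\Lambda'\subset\mathbb{R}^k$ for which $\{e_{\lambda'}\}_{\lambda'\in\Lambda'}$ is a frame for $L^2(K,J\,dx)$: transporting such a frame through the unitary $U^{-1}$ then shows, with identical bounds, that $\{e_{(\lambda',0)}\}_{\lambda'\in\Lambda'}$ is a frame for $L^2(\mu)$, which is precisely the F-spectrality of $\mu$. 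Before applying the base case I would record the two properties of the weight $J$ that matter: it is continuous, hence bounded above, on the compact set $K$, while $I_k+D\phi^{T}D\phi\ge I_k$ forces $J\ge 1$. Thus $J$ is pinched between two positive constants on $K$ (and, in passing, $\mu(\Gamma)=\int_K J\,dx<\infty$, so $\mu$ is finite).

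With these bounds in hand the base case is immediate. After normalizing to a probability measure (scaling the underlying measure by a positive constant only rescales the frame bounds), the measure $J\,dx$ on $K$ is a compactly supported absolutely continuous probability measure whose density is bounded above and below, so Proposition~\ref{3.18} furnishes a countable $\Lambda'\subset\mathbb{R}^k$ making $\{e_{\lambda'}\}_{\lambda'\in\Lambda'}$ a frame for $L^2(K,J\,dx)$; pulling back by $U^{-1}$ finishes the proof. I expect the only genuine subtlety to be the reduction step, not any estimate: once one sees that taking $\lambda''=0$ turns graph exponentials into flat exponentials over $\mathbb{R}^k$, the geometry of $\phi$ enters solely through the Jacobian weight $J$, and the single thing one must verify about $J$—that it lies between two positive constants—is automatic from smoothness, compactness, and the inequality $J\ge 1$. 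Everything else is a transfer of the frame inequalities across the isometry $U$.
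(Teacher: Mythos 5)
The paper itself contains no proof of this statement---it is quoted verbatim from Lev \cite{21}---so the relevant comparison is with the source's argument, and your proposal is correct and essentially reproduces it. The graph parametrization $\Phi(x)=(x,\phi(x))$ with the area-formula Jacobian $J=\sqrt{\det(I_k+D\phi^{T}D\phi)}$ pinched between $1$ and $\max_K J$, the restriction to frequencies $(\lambda',0)$ so that graph exponentials collapse to flat exponentials on the base, and the appeal to the bounded-density criterion (Proposition \ref{3.18}, which in turn rests on the exponential-frame theorem for sets of finite measure, Theorem \ref{2.17}) are exactly the ingredients of the proof in \cite{21}.
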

The next proposition shows that if $ 1<p, q<\infty $ and $ \dfrac{1}{p} + \dfrac{1}{q} =1 $, then considering any countable subset (finite or infinite) $ \Lambda $ of $ \mathbb{R}^d $, one can obtain tight $ (p,q) $-frame measures and $ (p,q) $-Plancherel measures $\nu_\Lambda $ for $ \delta_0 $. In addition, there exist tight and Parseval $ q $-frames for $ L^p(\delta_0) $.
 \begin{proposition} \label{3.20}
Suppose that $ 1<p, q<\infty $ and $ \dfrac{1}{p} + \dfrac{1}{q} =1 $. Then there exists a measure $ \mu $ which admits tight $ (p,q) $-frame measures and $ (p,q) $-Plancherel measures. Moreover, there exist tight and Parseval $ q $-frames for $ L^p(\mu) $.
 \end{proposition}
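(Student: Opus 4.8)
The plan is to exhibit the single measure $\mu = \delta_0$, the Dirac mass at the origin, and to show that it meets all four requirements at once. The first and decisive step is to identify the semi-inner product space $L^p(\delta_0)$. Since $\|f\|_{L^p(\delta_0)} = |f(0)|$, this space is one-dimensional (a function is determined by its value at $0$), and substituting $\mu = \delta_0$ into the defining formula for the compatible semi-inner product collapses it to
\begin{equation*}
[f,g]_{L^p(\delta_0)} = \frac{1}{|g(0)|^{p-2}}\, f(0)\,|g(0)|^{p-1}\,\overline{\operatorname{sgn}(g(0))} = f(0)\,\overline{g(0)},
\end{equation*}
using $|g(0)|\,\overline{\operatorname{sgn}(g(0))} = \overline{g(0)}$. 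In other words, on $L^p(\delta_0)$ the compatible semi-inner product is nothing but the ordinary inner product of the scalars $f(0), g(0)$, independently of $p$.

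The second step records the crucial consequence that $e_t(0) = e^{2\pi i t\cdot 0} = 1$ for every $t \in \mathbb{R}^d$, so that
\begin{equation*}
[f, e_t]_{L^p(\delta_0)} = f(0), \qquad\text{hence}\qquad |[f,e_t]_{L^p(\delta_0)}|^q = |f(0)|^q = \|f\|_{L^p(\delta_0)}^q
\end{equation*}
for all $t$. Thus the integrand in the $(p,q)$-frame measure inequality is constant in $t$ and equal to $\|f\|_{L^p(\delta_0)}^q$. For any finite Borel measure $\nu$ this yields $\int_{\mathbb{R}^d} |[f,e_t]_{L^p(\delta_0)}|^q\, d\nu(t) = \nu(\mathbb{R}^d)\,\|f\|_{L^p(\delta_0)}^q$, so $\nu$ is automatically a tight $(p,q)$-frame measure with $A = B = \nu(\mathbb{R}^d)$; choosing $\nu$ to be a probability measure (for instance any $\nu_\Lambda = \sum_{\lambda \in \Lambda} d_\lambda \delta_\lambda$ with $\sum_\lambda d_\lambda = 1$, over an arbitrary countable $\Lambda$) makes $\nu$ a $(p,q)$-Plancherel measure.

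For the sequence statement I would either argue directly or invoke Lemma \ref{3.6}(ii): since $\delta_0$ is a probability measure and the purely atomic measure $\nu_\Lambda = \sum_{\lambda \in \Lambda} d_\lambda \delta_\lambda$ (with $\sum_\lambda d_\lambda = 1$) is a $(p,q)$-frame measure for it, $\{\sqrt[q]{d_\lambda}\, e_\lambda\}_{\lambda \in \Lambda}$ is a $q$-frame for $L^p(\delta_0)$; the direct computation $\sum_\lambda |[f, \sqrt[q]{d_\lambda}\, e_\lambda]_{L^p(\delta_0)}|^q = \|f\|_{L^p(\delta_0)}^q \sum_\lambda d_\lambda = \|f\|_{L^p(\delta_0)}^q$ shows it is in fact Parseval, and replacing the normalization $\sum_\lambda d_\lambda = 1$ by $\sum_\lambda d_\lambda = A$ produces a tight $q$-frame with bound $A$. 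There is no genuine obstacle here: the whole argument rests on the single observation that $L^p(\delta_0)$ is one-dimensional with $e_t(0) = 1$, which trivializes every inequality. The only point requiring care is the algebraic simplification of the semi-inner product formula (the cancellation of the powers of $|g(0)|$ against the sign factor), together with the remark that, because the common bound $A = B$ may be prescribed freely, both the tight/Parseval and the normalized Plancherel cases are obtained simultaneously.
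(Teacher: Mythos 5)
Your proof is correct and follows essentially the same route as the paper: the paper also takes $\mu = \delta_0$, observes that $[f,e_t]_{L^p(\delta_0)} = f(0)$ makes the integrand constantly equal to $\|f\|_{L^p(\delta_0)}^q$, and uses weighted atomic measures $\sum_\lambda c_\lambda \delta_\lambda$ with $\sum_\lambda c_\lambda = m$ (tight) or $=1$ (Plancherel/Parseval) for both the measure and the $q$-frame statements. Your only additions --- the explicit simplification $[f,g]_{L^p(\delta_0)} = f(0)\overline{g(0)}$ and the remark that any finite Borel measure $\nu$ works, not just atomic ones --- are harmless refinements of the same argument.
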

 \begin{proof}
 Let $ \mu=\delta_0 $. For a countable subset $ \Lambda $ of $ \mathbb{R}^d $, Let $\nu_\Lambda =\sum_{\lambda\in\Lambda} c_\lambda \delta_{\lambda} $ where $ c_\lambda > 0 $.
  
 If $ \sum_{\lambda\in\Lambda} c_\lambda = m \neq 1 $, then for all $ f\in L^p(\mu) $,
 \begin{equation*}
 \int_{\mathbb{R}^d} |[f, e_t]_{L^p(\mu)}|^q d\nu(t) = \sum_{\lambda\in\Lambda} c_\lambda |f(0)|^q = m\| f \|_{L^p(\mu)}^q.
 \end{equation*}  
 
 If $ 0 < c_\lambda <1 $ and $ \sum_{\lambda\in\Lambda} c_\lambda = 1 $, then for all $ f\in L^p(\mu) $, 
 \begin{equation*}
 \int_{\mathbb{R}^d} |[f, e_t]_{L^p(\mu)}|^q d\nu(t) = \sum_{\lambda\in\Lambda} c_\lambda |f(0)|^q = \| f \|_{L^p(\mu)}^q.
 \end{equation*}
 
 On the other hand, for all $ f\in L^p(\mu) $ we have
   \begin{equation*}
\int_{\mathbb{R}^d} |[f, e_t]_{L^p(\mu)}|^q d\nu(t) = \sum_{\lambda\in\Lambda} c_\lambda |[f, e_{\lambda}]_{L^p(\mu)}|^q = \sum_{\lambda\in\Lambda} |[f, \sqrt [q]c_\lambda e_{\lambda}]_{L^p(\mu)}|^q.
 \end{equation*}
Hence If $ \sum_{\lambda\in\Lambda} c_\lambda = m \neq 1 $, then $ \{ \sqrt [q]c_\lambda e_{\lambda}\}_{\lambda \in \Lambda} $ is a tight $ q $-frame for $ L^p(\mu) $, and If $ 0 < c_\lambda <1 $, $ \sum_{\lambda\in\Lambda} c_\lambda = 1 $, then $ \{ \sqrt [q]c_\lambda e_{\lambda}\}_{\lambda \in \Lambda} $ is a Parseval $ q $-frame for $ L^p(\mu) $.
 \end{proof}
\begin{proposition}
 Let $ \mu $ be a finite Borel measure and $ B $ be a positive constant. Then there exists a $ (p,q) $-Bessel measure $ \nu $ for $ \mu $ for all $ 1< p, q < \infty $ and $ \dfrac{1}{p} + \dfrac{1}{q} =1 $, such that $ \nu \in \mathcal{B}_B(\mu)_{p,q}  $. In addition, for every $ 1< p, q < \infty $ and $ \dfrac{1}{p} + \dfrac{1}{q} =1 $, there exists a $ q $-Bessel with bound $ B $ for $ L^p(\mu) $.
\end{proposition}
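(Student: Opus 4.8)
The plan is to deduce the whole statement from Proposition \ref{3.1} by a single rescaling. Recall that Proposition \ref{3.1} already shows that any finite Borel measure $\nu$ is a $(p,q)$-Bessel measure for $\mu$ with bound $\mu(\mathbb{R}^d)\,\nu(\mathbb{R}^d)$, and crucially this bound does not depend on the conjugate pair $(p,q)$. Since the bound is linear in the total mass $\nu(\mathbb{R}^d)$, I can arrange it to equal any prescribed $B$ by scaling a probability measure, and the resulting $\nu$ will then work simultaneously for every conjugate pair $p,q>1$.

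Concretely, I would write $M:=\mu(\mathbb{R}^d)$. If $M=0$ then $L^p(\mu)=\{0\}$ and every assertion is vacuous, so I assume $M>0$. I then fix a Borel probability measure $\nu_0$ (say $\nu_0=\delta_0$) and set $\nu:=\tfrac{B}{M}\nu_0$, so that $\nu(\mathbb{R}^d)=B/M$. Proposition \ref{3.1} gives, for every conjugate pair $1<p,q<\infty$ and every $f\in L^p(\mu)$,
\[
\int_{\mathbb{R}^d}\lvert[f,e_t]_{L^p(\mu)}\rvert^q\,d\nu(t)\;\le\;M\,\nu(\mathbb{R}^d)\,\|f\|_{L^p(\mu)}^q\;=\;B\,\|f\|_{L^p(\mu)}^q,
\]
which shows $\nu\in\mathcal{B}_B(\mu)_{p,q}$ and settles the first assertion.

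For the $q$-Bessel claim I would manufacture a one-term sequence from the same construction, taking $\nu=\tfrac{B}{M}\delta_0$. Here $\int_{\mathbb{R}^d}\lvert[f,e_t]_{L^p(\mu)}\rvert^q\,d\nu(t)=\tfrac{B}{M}\lvert[f,e_0]_{L^p(\mu)}\rvert^q$, and I would absorb the scalar $B/M$ into the exponential. The key identity to check is the positive homogeneity of the compatible semi-inner product in its second slot, namely $[f,c\,g]_{L^p(\mu)}=c\,[f,g]_{L^p(\mu)}$ for real $c>0$, which follows at once from the explicit $L^p(\mu)$ formula. Setting $c:=(B/M)^{1/q}$ and $g:=c\,e_0$, I then get
\[
\bigl\lvert[f,g]_{L^p(\mu)}\bigr\rvert^q=c^{\,q}\bigl\lvert[f,e_0]_{L^p(\mu)}\bigr\rvert^q=\int_{\mathbb{R}^d}\lvert[f,e_t]_{L^p(\mu)}\rvert^q\,d\nu(t)\le B\,\|f\|_{L^p(\mu)}^q,
\]
so the single-element sequence $\{g\}$ is a $q$-Bessel for $L^p(\mu)$ with bound $B$.

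There is no real obstacle in this argument, since all the analytic content is carried by Proposition \ref{3.1}; the only points requiring attention are the trivial handling of the degenerate case $\mu(\mathbb{R}^d)=0$ and the verification of the scalar pull-out $[f,c\,g]=c[f,g]$ from the defining formula of the semi-inner product, which is exactly what lets me trade the rescaled point mass for a rescaled exponential without changing the bound.
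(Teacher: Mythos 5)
Your proof is correct and is essentially the paper's own argument in its simplest instance: the paper takes a discrete measure $\nu=\sum_{i\in I}c_i\delta_{\lambda_i}$ with $\sum_{i\in I}c_i\le B/\mu(\mathbb{R}^d)$ and forms the $q$-Bessel $\{\sqrt[q]{c_i}\,e_{\lambda_i}\}_{i\in I}$, while you take the one-atom case $\nu=\tfrac{B}{M}\delta_0$ and the singleton $\{(B/M)^{1/q}e_0\}$, both resting on the same H\"older estimate (Proposition \ref{3.1}) and the same positive homogeneity $[f,cg]=c[f,g]$ of the semi-inner product. Your explicit handling of the degenerate case $\mu(\mathbb{R}^d)=0$ is a minor point the paper omits.
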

\begin{proof} Let $ \nu =\sum_{i\in I}c_i \delta_{\lambda_i} $ for some $ \lambda_i\in \mathbb{R}^d $ such that $\sum_{i\in I}c_i \leq \dfrac{B}{\mu(\mathbb{R}^d)} $. Let $ p>1 $ and $ f\in L^p(\mu) $. If $ q $ is the conjugate exponent to $ p $, then by applying Holder's inequality we have
\begin{equation}\label{4}
\int_{\mathbb{R}^d} |[f, e_t]_{L^p(\mu)}|^q d\nu(t) =\sum_{i\in I}c_i\int_{\mathbb{R}^d} |[f, e_t]_{L^p(\mu)}|^q d\delta_{\lambda_i}(t)\leq\sum_{i\in I}c_i \parallel f\parallel_{L^p(\mu)}^q \mu(\mathbb{R}^d) \leq B\parallel f\parallel_{L^p(\mu)}^q.
\end{equation}
 Hence  $\nu\in\mathcal{B}_B(\mu)_{p,q} $.
 
Since
\begin{equation*}
\sum_{i\in I} |[f, \sqrt [q]c_i e_{\lambda_i}]_{L^p(\mu)}|^q = \sum_{i\in I}c_i |[f, e_{\lambda_i}]_{L^p(\mu)}|^q = \int_{\mathbb{R}^d} |[f, e_t]_{L^p(\mu)}|^q d\nu(t),
\end{equation*}
the second statement follows from (\ref{4}).
\end{proof}
All infinite $ (p,q) $-Bessel measures $ \nu $ we observed were discrete. Now the question is whether we can find a finite measure $ \mu $ which admits a continuous infinite $ (p,q) $-Bessel measure $ \nu $. The following proposition shows that the answer is affirmative. 
\begin{proposition}\label{3.26}
If $ \nu=\lambda $ is the Lebesgue measure on $ \mathbb{R}^d $ and $ \mu=\lambda |_{[0 , 1]^d} $, then $ \lambda $ is a $ (p,q) $-Bessel measure for $ \mu $ where $1\leq p \leq 2 $ and $ q $ is the conjugate exponent to $ p $.
\end{proposition}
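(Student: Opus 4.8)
The plan is to recognize this statement as the continuous analogue of Example~\ref{ex 4}: there the relevant estimate was the Hausdorff--Young inequality for Fourier series (associated with the discrete measure $\sum_{t\in\mathbb{Z}^d}\delta_t$), whereas here $\nu=\lambda$ is the full Lebesgue measure and the governing estimate is the Hausdorff--Young inequality for the Fourier transform on $\mathbb{R}^d$. The first step is to rewrite the quantity we must control. By Remark~\ref{2.11}, $[f,e_t]_{L^p(\mu)}=\widehat{fd\mu}(t)$, and since $\mu=\lambda|_{[0,1]^d}$ we may identify $f\in L^p(\mu)$ with its extension by zero $\tilde f=f\chi_{[0,1]^d}$, a genuine element of $L^p(\mathbb{R}^d,\lambda)$ satisfying $\|\tilde f\|_{L^r(\lambda)}=\|f\|_{L^r(\mu)}$ for every $r$. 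With this identification $\widehat{fd\mu}(t)=\widehat{\tilde f}(t)$ is exactly the Fourier transform of $\tilde f$ on $\mathbb{R}^d$, so the desired bound becomes
\begin{equation*}
\int_{\mathbb{R}^d}|\widehat{\tilde f}(t)|^q\,d\lambda(t)\leq B\,\|\tilde f\|_{L^p(\lambda)}^q.
\end{equation*}

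I would then obtain this by interpolating the operator $T_\nu f=\widehat{fd\mu}$ between its two endpoints, exactly as in Theorem~\ref{2.19} and Example~\ref{2.20}. At the endpoint $p=1$, $q=\infty$ the trivial bound $\|\widehat{fd\mu}\|_\infty\leq\|f\|_{L^1(\mu)}$ of Remark~\ref{2.11} gives operator norm $\leq 1$. At the endpoint $p=2$, $q=2$ Plancherel's theorem applied to $\tilde f$ yields $\|\widehat{fd\mu}\|_{L^2(\lambda)}=\|\tilde f\|_{L^2(\lambda)}=\|f\|_{L^2(\mu)}$, again operator norm $\leq 1$. Feeding these into the Riesz--Thorin interpolation theorem with $p_0=1$, $q_0=\infty$, $p_1=2$, $q_1=2$ and $\frac1p=1-\frac\theta2$, $\frac1q=\frac\theta2$ for $0<\theta<1$ produces, for each $1<p<2$, a constant $B\leq 1$ with $\|\widehat{fd\mu}\|_{L^q(\lambda)}^q\leq B\,\|f\|_{L^p(\mu)}^q$. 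The two endpoint cases $p=2$ and $p=1$ are covered directly by Plancherel and by Remark~\ref{2.11}, respectively, so $\lambda$ is a $(p,q)$-Bessel measure for $\mu$ throughout $1\leq p\leq 2$, in fact with bound $1$.

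There is no substantive obstacle here, since the statement is essentially the Hausdorff--Young inequality in disguise; the only points requiring care are bookkeeping ones. First, one must confirm that $T_\nu$ is consistently defined on a common domain for the interpolation: because $\mu$ is finite we have $L^2(\mu)\subseteq L^1(\mu)$, so $L^2(\mu)$ serves as a dense subspace on which $T_\nu$ is unambiguously defined, which is what the Riesz--Thorin theorem requires. Second, one should verify that the $L^2$ endpoint is genuine equality rather than a mere inequality, which is immediate from Plancherel once $\widehat{fd\mu}$ has been identified with $\widehat{\tilde f}$. I would then record the resulting membership as $\lambda\in\mathcal{B}_1(\mu)_{(p,q)}$ for $1\leq p\leq 2$.
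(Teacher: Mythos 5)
Your proof is correct and follows essentially the same route as the paper's own argument: Plancherel's theorem applied to the zero-extension of $f$ gives the $(2,2)$ endpoint, the trivial bound $\|\widehat{fd\mu}\|_\infty\leq\|f\|_{L^1(\mu)}$ gives the $(1,\infty)$ endpoint, and Riesz--Thorin interpolation yields $\lambda\in\mathcal{B}_1(\mu)_{p,q}$ for $1\leq p\leq 2$. The extra bookkeeping you supply (the identification $\widehat{fd\mu}=\widehat{\tilde f}$ and the common dense domain for interpolation) is implicit in the paper's proof and does no harm.
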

\begin{proof}
According to Plancherel's theorem the following equation is satisfied:
\begin{equation*}
\int_{\mathbb{R}^d} |\hat{f}(t)|^2 d\lambda(t) =\int_{\mathbb{R}^d} |f(x)|^2 d\lambda(x) \quad \text{for all }  f\in L^2(\lambda).
\end{equation*} 
 If $ f $ is supported on $ [0 , 1]^d $, then 
 \begin{equation*}
\int_{\mathbb{R}^d} |\widehat{f d\mu}|^2 d\lambda(t) =\int_{[0, 1]^d} |f(x)|^2 d\mu (x) \quad \text{for all }  f\in L^2(\mu).
\end{equation*} 
Moreover, we have $ \| \widehat{fd\mu} \|_\infty \leq \| f \|_{L^1(\mu)} $ for all $ f $ in $ L^1(\mu) $.  Now by applying the Riesz-Thorin interpolation theorem
\begin{equation*}
 \int_{\mathbb{R}^d} |\widehat{f d\mu}|^q d\lambda(t) \leq \| f\|_{L^p(\mu)}^q \quad \text{for all }  f\in L^p(\mu), 
\end{equation*}
where $1\leq p \leq 2 $ and $ q $ is the conjugate exponent to $ p $. Hence $ \lambda \in \mathcal{B}_1(\mu)_{p,q} $.
\end{proof}
\begin{corollary} The measure $ \mu=\lambda |_{[0 , 1]^d} $ has infinite continuous and discrete $ (p,q) $-Bessel measures, where $1\leq p \leq 2 $ and $ q $ is the conjugate exponent to $ p $. More precisely, if $ \nu_1 = \sum_{t\in \mathbb{Z}^d}\delta_t $ and $ \nu_2=\lambda $, then $ \nu_1, \nu_2 \in \mathcal{B}_1(\mu)_{p,q} $.
\end{corollary}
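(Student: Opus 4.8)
The plan is to read this corollary off directly from the two results just established, since $\nu_1$ and $\nu_2$ are precisely the measures already treated there. First I would dispose of the continuous case: for $\nu_2 = \lambda$, Proposition \ref{3.26} already asserts $\lambda \in \mathcal{B}_1(\mu)_{p,q}$ for $\mu = \lambda|_{[0,1]^d}$ and $1 \le p \le 2$, so no new work is needed for $\nu_2$.

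For the discrete measure $\nu_1 = \sum_{t\in\mathbb{Z}^d}\delta_t$, I would observe that $\mu = \lambda|_{[0,1]^d}$ is literally the measure $\chi_{\{[0,1]^d\}}dx$ appearing in Example \ref{ex 4}. That example, via the Hausdorff--Young inequality, already records
\begin{equation*}
\int_{\mathbb{R}^d}|[f,e_t]_{L^p(\mu)}|^q\,d\nu_1(t) = \sum_{t\in\mathbb{Z}^d}|[f,e_t]_{L^p(\mu)}|^q \le \|f\|_{L^p(\mu)}^q
\end{equation*}
for every $f \in L^p(\mu)$ with $1 \le p \le 2$, which is exactly the statement $\nu_1 \in \mathcal{B}_1(\mu)_{p,q}$.

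It then remains only to classify the two measures so as to justify the words \emph{infinite}, \emph{discrete}, and \emph{continuous}. The measure $\nu_1$ is purely atomic with $\nu_1(\mathbb{R}^d) = \infty$, since $\mathbb{Z}^d$ is infinite and each atom carries unit mass; hence $\nu_1$ is an infinite discrete $(p,q)$-Bessel measure for $\mu$. The measure $\nu_2 = \lambda$ is absolutely continuous, in particular continuous, with $\nu_2(\mathbb{R}^d) = \lambda(\mathbb{R}^d) = \infty$; hence $\nu_2$ is an infinite continuous $(p,q)$-Bessel measure for $\mu$. Combining these gives both assertions. I do not expect any genuine obstacle here, as the corollary is essentially a packaging of Example \ref{ex 4} and Proposition \ref{3.26}; the only points deserving a moment's attention are the identification $\lambda|_{[0,1]^d} = \chi_{\{[0,1]^d\}}dx$ and the fact that the exponent range $1 \le p \le 2$ is common to both cited statements, both of which are immediate.
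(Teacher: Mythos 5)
Your proposal is correct and follows exactly the paper's own route: the paper's proof is the one-line observation that the conclusion follows from Example \ref{ex 4} (for the discrete measure $\nu_1$) and Proposition \ref{3.26} (for the continuous measure $\nu_2$). Your additional remarks identifying $\lambda|_{[0,1]^d}$ with $\chi_{[0,1]^d}\,dx$ and verifying that both measures are infinite are sound elaborations of what the paper leaves implicit.
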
 
\begin{proof}
The conclusion follows from Example \ref{ex 4} and Proposition \ref{3.26}. 
\end{proof}
\section{Properties and Structural Results}
In this section our assertions are based on the results by Dutkay, Han, and Weber from \cite{5}. We generalize the results and give some of the proofs for completeness.
\begin{proposition}\label{prop 2}
Let $ \mu $ be a Borel probability measure. Let $ 1< p, q < \infty $ and $ \dfrac{1}{p} + \dfrac{1}{q} =1 $. If $\nu$ is a $(p,q)$-Bessel measure for $\mu$, then there exists a constant $ C $ such that $ \nu(K) \leq C diam(K)^d $ for any compact subset $ K $ of $ \mathbb{R}^d $. Accordingly, $ \nu $ is $\sigma$-finite.
\end{proposition}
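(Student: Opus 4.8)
The plan is to exploit the Fourier-analytic identity of Remark \ref{2.11}, namely $[f,e_t]_{L^p(\mu)}=\widehat{fd\mu}(t)$, and to feed into the $(p,q)$-Bessel inequality the single modulated test function $f=e_{t_0}$ for an arbitrary $t_0\in\mathbb{R}^d$, following the scheme of \cite{5}. Since $\mu$ is a probability measure, $|e_{t_0}(x)|=1$ gives the normalization $\|e_{t_0}\|_{L^p(\mu)}=\big(\int_{\mathbb{R}^d}1\,d\mu\big)^{1/p}=1$, and a direct computation yields $[e_{t_0},e_t]_{L^p(\mu)}=\int_{\mathbb{R}^d}e^{-2\pi i(t-t_0)\cdot x}\,d\mu(x)=\widehat{\mu}(t-t_0)$. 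The structural point I would stress is that this depends on $t$ and $t_0$ only through the difference $t-t_0$: the integrand is a fixed translate of one profile, which is exactly what will make all subsequent estimates uniform in the center $t_0$.

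Next I would invoke the regularity of $\widehat{\mu}$. As $\mu$ is finite, dominated convergence shows $\widehat{\mu}$ is continuous with $\widehat{\mu}(0)=\mu(\mathbb{R}^d)=1$, so there is a radius $\delta>0$, depending only on $\mu$, such that $|\widehat{\mu}(s)|\ge \tfrac12$ whenever $|s|\le\delta$. Restricting the Bessel integral for $f=e_{t_0}$ to $\{|t-t_0|\le\delta\}$ then gives
\begin{equation*}
2^{-q}\,\nu\big(\{t:|t-t_0|\le\delta\}\big)\le \int_{\{|t-t_0|\le\delta\}}|\widehat{\mu}(t-t_0)|^q\,d\nu(t)\le \int_{\mathbb{R}^d}|[e_{t_0},e_t]_{L^p(\mu)}|^q\,d\nu(t)\le B,
\end{equation*}
where $B$ is the $(p,q)$-Bessel bound. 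Hence every closed ball of radius $\delta$ has $\nu$-mass at most $2^qB$, uniformly in its center $t_0$. This translation-uniform ball estimate is the substantive content of the proposition.

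Finally I would run a routine covering argument. Given a compact $K$ with $D=\mathrm{diam}(K)$, I fix $x_0\in K$ so that $K$ lies in the closed ball of radius $D$ about $x_0$, and cover that ball by $N$ balls of radius $\delta$, where $N\le c_d(1+D/\delta)^d$ for a dimensional constant $c_d$. Summing the uniform estimate over the cover yields $\nu(K)\le N\cdot 2^qB$; for $D\ge\delta$ this becomes $\nu(K)\le C\,D^d$ with $C=c_d(2/\delta)^d 2^q B$, the asserted inequality, while for $D<\delta$ one already has the constant bound $\nu(K)\le 2^qB$. Either way $\sigma$-finiteness is immediate: writing $\mathbb{R}^d=\bigcup_{n\ge1}\{x:|x|\le n\}$ as a countable union of compact sets, each has finite $\nu$-measure, so $\nu$ is $\sigma$-finite.

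The main obstacle—and the only place the hypotheses are genuinely used—is obtaining the ball estimate \emph{uniformly in the center}; this rests on the translation structure $[e_{t_0},e_t]_{L^p(\mu)}=\widehat{\mu}(t-t_0)$ together with the normalization $\|e_{t_0}\|_{L^p(\mu)}=1$ afforded by $\mu$ being a probability measure. The covering step is elementary but should be carried out with some care in the constants so as to land the clean $\mathrm{diam}(K)^d$ growth; note also that, because $\nu$ may carry atoms (as in the discrete examples $\nu=\sum_\lambda\delta_\lambda$ used earlier), the inequality is informative precisely in the regime $\mathrm{diam}(K)\gtrsim\delta$, which is all that is needed for the $\sigma$-finiteness conclusion.
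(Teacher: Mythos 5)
Your proof is correct and follows essentially the same route as the paper's: test the Bessel inequality on the exponentials $e_{t_0}$, use continuity of $\widehat{\mu}$ at the origin to obtain a ball-mass bound uniform in the center, then conclude by a covering argument. The only cosmetic differences are your explicit choice $\eta=\tfrac12$ and covering by balls rather than by cubes of side $\epsilon$; if anything, you are more careful than the paper in flagging the small-diameter (atomic) regime, where only the constant ball bound---which is all that $\sigma$-finiteness requires---survives.
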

\begin{proof}
It is easy to check that $\widehat{d\mu} : \mathbb{R}^d \rightarrow \mathbb{C}$ is uniformly continuous and $\widehat{d\mu}(0) =\mu(\mathbb{R}^d) =1$. So for every $ \eta>0 $ there exists $\epsilon > 0$ such that for $x\in \mathsf{B}(0,\epsilon)$ we have $|\widehat{d\mu}(0)|-|\widehat{d\mu}(x)|\leq|\widehat{d\mu}(0)-\widehat{d\mu}(x)|\leq \eta$, and then $ |\widehat{d\mu}(x)|\geq 1-\eta $. If $ \delta:= (1-\eta)^q $, then
$|\widehat{d\mu}(x)|^q \geq \delta$ for $x\in \mathsf{B}(0,\epsilon)$. Thus, for any $t \in \mathbb{R}^d$, 
\begin{align*}
B=B\|e_t\|_{L^p(\mu)}^q &\geq \int_{\mathbb{R}^d} |[e_t , e_x]|^q d\nu(x) =\int_{\mathbb{R}^d} |[1 , e_{x-t}]|^q d\nu(x)= \int_{\mathbb{R}^d} |\widehat{d\mu}(x-t)|^q d\nu(x)\\
 &\geq \int_{\mathsf{B}(t,\epsilon)}|\widehat{d\mu} (x-t)|^q d\nu(x) \geq \nu(\mathsf{B}(t,\epsilon))\delta.
 \end{align*}
Now Let $ K\subseteq\mathbb{R}^d $ be compact and $ r= diam (K) $. Then there exists a point $ x = (x_1, \ldots, x_d) $ in $ \mathbb{R}^d $ such that $ K \subset \prod_{i=1}^d [x_i-r, x_i+r] $. We may assume that $ \epsilon < 2r $ and $ 2r/\epsilon \in \mathbb{N} $. Let $ M= 2r/\epsilon $. We have $ \prod_{i=1}^d [x_i-r, x_i+r] = \bigcup_{\alpha=1}^{M^d} C_\alpha $ where $ C_\alpha $s are d-dimensional cubes of side length $ \epsilon $. For any $ \alpha \in \{1,\ldots, M^d \} $, let $ t_\alpha $ be the center point of $ C_\alpha $. Then $ C_\alpha \subset \mathsf{B}(t_\alpha,\epsilon) $. Now if $ C:= (2/\epsilon)^d B/\delta $, then
\begin{equation*}
\nu(K) \leq \nu \left(\bigcup_{\alpha=1}^{M^d}\mathsf{B}(t_\alpha, \epsilon)\right) \leq\sum^{M^d}_{\alpha=1}\nu(\mathsf{B}(t_\alpha, \epsilon) )\leq \left(\dfrac{2r}{\epsilon}\right)^d \dfrac{B}{\delta} = r^d \left(\dfrac{2}{\epsilon}\right)^d \dfrac{B}{\delta} = C r^d.
\end{equation*}
Hence the assertion follows.
\end{proof}
\begin{theorem}
Let $ 1< p, q < \infty $ and $ \dfrac{1}{p} + \dfrac{1}{q} =1 $. Let $ B > A > 0 $. Then the set $\mathcal{F}_{A,B}(\mu)_{p,q}$ is empty for some finite compactly supported Borel measures $ \mu $.
\end{theorem}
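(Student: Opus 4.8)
The plan is to exhibit, for the given $B > A > 0$, a single purely atomic measure $\mu$ supported on two points with sufficiently unequal weights, and to show that the indicator test functions of the two atoms impose contradictory constraints on the total mass of any putative $(p,q)$-frame measure $\nu$. This stands in contrast with Proposition \ref{3.1}, which guarantees that a $(p,q)$-Bessel (upper bound) measure always exists; so it is the coexistence of the upper and lower frame bounds that must fail.

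First I would fix two distinct points $x_0, x_1 \in \mathbb{R}^d$ and set $\mu = a_0\delta_{x_0} + a_1\delta_{x_1}$, choosing the weights $a_0, a_1 > 0$ so that $a_0/a_1 > B/A$. This $\mu$ is finite and compactly supported, as required (and may be normalized to a probability measure without changing the ratio). Assuming toward a contradiction that some $\nu \in \mathcal{F}_{A,B}(\mu)_{p,q}$ exists, I would write $m := \nu(\mathbb{R}^d)$ and feed two carefully chosen functions into the frame inequalities.

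The decisive observation is that for $f = \chi_{\{x_0\}} \in L^p(\mu)$ the transform $\widehat{fd\mu}(t) = [f, e_t]_{L^p(\mu)}$ is constant in $t$, equal to $a_0$ (using the identity in Remark \ref{2.11}). Hence $\int_{\mathbb{R}^d} |[f,e_t]_{L^p(\mu)}|^q\, d\nu(t) = a_0^q\, m$, while $\|f\|_{L^p(\mu)}^q = a_0^{q/p}$; since $q - q/p = 1$, the frame inequalities collapse to $A \le a_0 m \le B$. The same test applied to $f = \chi_{\{x_1\}}$ yields $A \le a_1 m \le B$. Combining the upper bound $a_0 m \le B$ with the lower bound $a_1 m \ge A$ forces $a_0/a_1 \le B/A$, contradicting the choice of weights; therefore $\mathcal{F}_{A,B}(\mu)_{p,q} = \emptyset$.

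I do not expect a genuine obstacle in this argument: the whole point is that indicator functions of single atoms make $\widehat{fd\mu}$ constant, so that only the scalar $\nu(\mathbb{R}^d)$ enters the estimate and the fine structure of $\nu$ drops out entirely. The only items requiring care are the elementary exponent bookkeeping $q(1 - 1/p) = 1$ and the reduction $[f,e_t]_{L^p(\mu)} = \widehat{fd\mu}(t)$, both of which are routine.
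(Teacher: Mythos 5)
Your argument is correct as a proof of the statement as it is literally quantified, but it is a genuinely different construction from the paper's, and it yields a strictly weaker conclusion. You fix $A,B$ first and build a two-atom measure $\mu=a_0\delta_{x_0}+a_1\delta_{x_1}$ with $a_0/a_1>B/A$; since the transforms of $\chi_{\{x_0\}}$ and $\chi_{\{x_1\}}$ have constant modulus $a_0$ and $a_1$ respectively, only the scalar $m=\nu(\mathbb{R}^d)$ enters, and the exponent identity $q-q/p=1$ collapses the frame inequalities to $A\le a_0 m\le B$ and $A\le a_1 m\le B$, which are incompatible with the chosen weight ratio (the upper bound forces $m<\infty$ and the lower bound forces $m>0$, so the ratio comparison is legitimate). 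The paper instead exhibits one measure, $\mu=\chi_{[0,1]}dx+\delta_2$, that works for \emph{all} bounds simultaneously: the atom, tested with $f=\chi_{\{2\}}$, forces $\nu(\mathbb{R})\le B<\infty$; inner regularity confines $\nu$ to a ball $\mathsf{B}(0,R)$ up to mass $\epsilon$; and the modulated indicator $g=e^{-2\pi iTx}\chi_{[0,1]}$ has transform decaying like $|\pi(T+t)|^{-1}$, so letting $T\to\infty$ and $\epsilon\to 0$ drives the lower frame bound to $0$. The difference matters: your $\mu$ depends on $A,B$, and it actually \emph{does} admit $(p,q)$-frame measures with other bounds (finite discrete probability measures are F-spectral, cf.\ Proposition \ref{3.22}), so your proof shows only that prescribed bounds with $B/A$ too small cannot be met by rescaling-insensitive reasons; the paper's mixed measure (continuous part plus atom) admits no $(p,q)$-frame measure for \emph{any} bounds, which is the pure-type obstruction the paper is really after. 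What your approach buys is elementarity---no regularity of Borel measures, no decay estimates, just exponent bookkeeping; what the paper's buys is the stronger, bound-independent nonexistence statement.
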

\begin{proof}
Let $\mu=\chi_{[0,1]} dx+\delta_2$. Suppose $\nu \in \mathcal{F}_{A,B}(\mu)_{p,q}$. Let $f :=\chi_{\{2\}}$. Then $\| f\|_{L^p (\mu)} = 1$ and $|[f,e_t]| = 1$ for all $ t\in\mathbb{R}$. In addition, the upper bound implies that $\nu(\mathbb{R}) \leq B < \infty$. Then from the inner regularity of Borel measures we obtain that for any $\epsilon > 0$ there exist a compact set $ K\subset \mathbb{R} $ and a positive constant $ R $ such that $ \nu(\mathbb{R})- \epsilon < K \leq \nu (\mathsf{B}(0, R)) $. Therefore $\nu(\mathbb{R} \setminus \mathsf{B}(0, R)) < \epsilon$.

 Choose some $T$ large, arbitrary and let $g(x) := e^{-2\pi iTx}\chi_{[0,1]}$. Then 
\begin{equation*}
|[g,e_t]_{L^p (\mu)}|^q = \left|\int_{[0, 1]} e^{-2\pi i (T + t)x} dx \right|^q = \left|\frac{sin(\pi(T+t))}{\pi(T+t)}\right|^q    \qquad (t\in\mathbb{R}).
\end{equation*}
The substitution $ z:= -2\pi x $ gives the last equality.
 Consequently, $|[g,e_t]_{L^p(\mu)}|^q \leq 1$ for all $ t\in\mathbb{R}$, and if we take $T\geq2R$, then  for all $t\in (-R,R)$
\begin{equation*}
|[g,e_t]_{L^p(\mu)}|^q \leq \frac{1}{\pi^q(T-R)^q}.  
\end{equation*}
Hence from the lower bound we obtain
\begin{align*}
A=A\| g\|_{L^p(\mu)}^q & \leq \int_\mathbb{R} |[g,e_t]_{L^p(\mu)}|^q d\nu(t) = \int_{\mathsf{B}(0, R)}|[g,e_t]_{L^p(\mu)}|^q d\nu(t) +\int_{\mathbb{R} \setminus \mathsf{B}(0, R)}|[g,e_t]_{L^p(\mu)}|^q d\nu(t)\\&\leq \frac{1}{\pi^q(T-R)^q}.\nu(\mathbb{R}) + \epsilon.
\end{align*}
Now if $T \rightarrow \infty$ and $\epsilon \rightarrow 0$, then $ A=0 $. This is a contradiction.
\end{proof}
The next proposition shows that if there exists a $(p,q)$-Bessel/frame measure, then many others can be constructed.
\begin{proposition} \label{prop 4.3}
Let $ \mu $ be a finite Borel measure and $ A, B $ be positive constants. Let $ 1< p, q < \infty $ and $ \dfrac{1}{p} + \dfrac{1}{q} =1 $. Then both sets $ \mathcal{B}_B(\mu)_{p,q} $ and $ \mathcal{F}_{A,B}(\mu)_{p,q} $ are convex and closed under convolution with Borel probability measures.  
\end{proposition}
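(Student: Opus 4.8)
The plan is to verify the two closure properties—convexity and invariance under convolution with a probability measure—separately, and in each case to treat the Bessel set $\mathcal{B}_B(\mu)_{p,q}$ first, since the frame set $\mathcal{F}_{A,B}(\mu)_{p,q}$ then follows by carrying the lower bound $A$ along verbatim with the upper bound $B$.

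Convexity I would dispatch immediately using linearity of the integral. Given $\nu_1,\nu_2\in\mathcal{B}_B(\mu)_{p,q}$ and $s\in[0,1]$, set $\nu=s\nu_1+(1-s)\nu_2$. For every $f\in L^p(\mu)$,
\begin{equation*}
\int_{\mathbb{R}^d}|\widehat{fd\mu}(t)|^q\,d\nu(t)=s\int_{\mathbb{R}^d}|\widehat{fd\mu}(t)|^q\,d\nu_1(t)+(1-s)\int_{\mathbb{R}^d}|\widehat{fd\mu}(t)|^q\,d\nu_2(t)\leq B\| f\|_{L^p(\mu)}^q,
\end{equation*}
and the same convex combination of the two lower estimates produces $A\| f\|_{L^p(\mu)}^q$ from below in the frame case. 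Hence both sets are convex.

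For stability under convolution, let $\nu$ lie in the relevant set and let $\gamma$ be a Borel probability measure. I would unfold the convolution through its defining identity as the pushforward of $\nu\times\gamma$ under addition: for the nonnegative integrand $|\widehat{fd\mu}|^q$, Tonelli's theorem gives
\begin{equation*}
\int_{\mathbb{R}^d}|\widehat{fd\mu}(t)|^q\,d(\nu*\gamma)(t)=\int_{\mathbb{R}^d}\int_{\mathbb{R}^d}|\widehat{fd\mu}(x+y)|^q\,d\nu(x)\,d\gamma(y).
\end{equation*}
The crux is the modulation identity $\widehat{fd\mu}(x+y)=\widehat{(e_{-y}f)d\mu}(x)$, obtained by factoring $e^{-2\pi i(x+y)\cdot s}=e^{-2\pi iy\cdot s}e^{-2\pi ix\cdot s}$ inside the Fourier integral. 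Since $|e_{-y}(s)|=1$ pointwise, multiplication by $e_{-y}$ is an isometry of $L^p(\mu)$, so $\| e_{-y}f\|_{L^p(\mu)}=\| f\|_{L^p(\mu)}$ for every $y$.

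Applying the Bessel bound for $\nu$ to the function $e_{-y}f$ then bounds the inner integral by $B\| e_{-y}f\|_{L^p(\mu)}^q=B\| f\|_{L^p(\mu)}^q$ uniformly in $y$; integrating this constant against $\gamma$ and using $\gamma(\mathbb{R}^d)=1$ yields $\int|\widehat{fd\mu}|^q\,d(\nu*\gamma)\leq B\| f\|_{L^p(\mu)}^q$, so $\nu*\gamma\in\mathcal{B}_B(\mu)_{p,q}$. In the frame case the identical argument applied to the lower bound produces $A\| f\|_{L^p(\mu)}^q$ from below. The only genuinely substantive point is the modulation identity together with the modulation-invariance of the $L^p(\mu)$-norm; once these are in hand the estimate reduces to a one-line application of the hypothesis on $\nu$ and the fact that $\gamma$ has total mass one, with Tonelli justified throughout by nonnegativity of the integrand.
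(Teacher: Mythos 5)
Your proof is correct and takes essentially the same route as the paper: convexity via linearity of the integral, and convolution stability via the identity $\widehat{fd\mu}(x+y)=\widehat{(e_{-y}f)d\mu}(x)$ (the paper writes it as $[f,e_{t+s}]_{L^p(\mu)}=[e_{-s}f,e_t]_{L^p(\mu)}$) together with the modulation invariance $\|e_{-y}f\|_{L^p(\mu)}=\|f\|_{L^p(\mu)}$ and the fact that the probability measure has total mass one. The only difference is cosmetic: you invoke Tonelli explicitly where the paper simply unfolds the convolution as a double integral, and both treat the frame lower bound by the same one-line analogous estimate.
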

\begin{proof}
Let $ \nu_1 , \nu_2 \in\mathcal{B}_B(\mu)_{p,q} $ and $0 <\lambda <1$. For all $f \in L^p (\mu)$, 
\begin{equation*}
\int_{\mathbb{R}^d} |\widehat{fd\mu}|^q d(\lambda\nu_1 +(1-\lambda)\nu_2)=\lambda\int_{\mathbb{R}^d} |\widehat{fd\mu}|^q d\nu_1 + (1-\lambda)\int_{\mathbb{R}^d} |\widehat{fd\mu}|^q d\nu_2 \leq B\| f\|_{L^p(\mu)}^q.
\end{equation*}
Then $ \lambda\nu_1 +(1-\lambda)\nu_2 \in\mathcal{B}_B(\mu)_{p,q} $. Similarly, if $ \nu_1 , \nu_2 \in\mathcal{F}_{A,B}(\mu)_{p,q}$, then $ \lambda\nu_1 +(1-\lambda)\nu_2 \in \mathcal{F}_{A,B}(\mu)_{p,q}$.

Let $ s\in \mathbb{R}^d $. Then for all $f \in L^p (\mu)$,
\begin{equation*}
\|e_s f\|^p_{L^p(\mu)}=\int_{\mathbb{R}^d} |e_s(x)f(x)|^p d\mu(x)=\int_{\mathbb{R}^d} |f(x)|^p d\mu(x)=\|f\|^p_{L^p(\mu)}.
\end{equation*}
In addition, Let $ \nu\in\mathcal{B}_{B}(\mu)_{p,q}  $ and $ \rho $ be a Borel probability measure on $\mathbb{R}^d$. Then for any $ t \in \mathbb{R}^d $ and $f \in L^p (\mu)$,  
\begin{align*}
[e_{-s}f, e_t]_{L^p(\mu)}&=\int_{\mathbb{R}^d} e_{-s}(x)f(x)e^{-2\pi it\cdot x} d\mu(x) =\int_{\mathbb{R}^d}f(x)e^{-2\pi i(s+t)\cdot x} d\mu(x) \\
&=[f ,e_{s+t}]_{L^p(\mu)}.
\end{align*}
Therefore, 
\begin{align*}
\int_{\mathbb{R}^d} |[f,e_t]_{L^p(\mu)}|^q d\nu \ast \rho(t) & = \int_{\mathbb{R}^d} \int_{\mathbb{R}^d} |[f,e_{t+s}]_{L^p(\mu)}|^q d\nu(t) \ d\rho(s) = \int_{\mathbb{R}^d} \int_{\mathbb{R}^d} |[e_{-s}f,e_t]_{L^p(\mu)}|^q d\nu(t) \ d\rho(s)\\&\leq \int_{\mathbb{R}^d} B\| e_{-s}f\|_{L^p(\mu)}^q d\rho(s) = B \int_{\mathbb{R}^d}\| f\|_{L^p(\mu)}^q d\rho(s) = B\| f\|_{L^p(\mu)}^q.
\end{align*}
 For $ \nu\in\mathcal{F}_{A,B}(\mu)_{p,q} $ one can obtain the lower bound analogously. 
\end{proof}
\begin{corollary}
Let $ 1< p, q < \infty $ and $ \dfrac{1}{p} + \dfrac{1}{q} =1 $. If there exists a $(p,q)$-Bessel/frame measure for $\mu$, then there exists one which is absolutely continuous with respect to the Lebesgue measure and whose Radon-Nikodym derivative is $ C^\infty $.
\end{corollary}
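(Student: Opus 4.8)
The plan is to lean entirely on Proposition \ref{prop 4.3}: since both $\mathcal{B}_B(\mu)_{p,q}$ and $\mathcal{F}_{A,B}(\mu)_{p,q}$ are closed under convolution with Borel probability measures, it suffices to convolve the given $(p,q)$-Bessel/frame measure with a probability measure carrying a smooth density, and then to verify that the result is absolutely continuous with a $C^\infty$ Radon--Nikodym derivative. No new analytic inequality is required; the content is the explicit smoothing construction and a differentiation-under-the-integral argument.

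First I would fix a $(p,q)$-Bessel/frame measure $\nu$ for $\mu$, which exists by hypothesis. Choose $g \in C^\infty(\mathbb{R}^d)$ nonnegative and compactly supported with $\int_{\mathbb{R}^d} g\, d\lambda = 1$ (a mollifier), and set $\rho := g\, d\lambda$, a Borel probability measure. By Proposition \ref{prop 4.3}, the convolution $\nu \ast \rho$ again lies in $\mathcal{B}_B(\mu)_{p,q}$ (respectively $\mathcal{F}_{A,B}(\mu)_{p,q}$) with the same bounds, so $\nu \ast \rho$ is itself a $(p,q)$-Bessel/frame measure for $\mu$.

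Next I would identify its Radon--Nikodym derivative. Writing the convolution as the push-forward of $\nu \times \rho$ under addition and applying Tonelli together with the translation-invariance of $\lambda$ yields
\begin{equation*}
(\nu \ast \rho)(E) = \int_{\mathbb{R}^d}\!\int_{\mathbb{R}^d} \chi_E(s+t)\, g(s)\, d\lambda(s)\, d\nu(t) = \int_E \Big( \int_{\mathbb{R}^d} g(u-t)\, d\nu(t) \Big)\, d\lambda(u),
\end{equation*}
for every Borel set $E$, so that $\nu \ast \rho \ll \lambda$ with density $h(u) = \int_{\mathbb{R}^d} g(u-t)\, d\nu(t) = (g \ast \nu)(u)$. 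To conclude I would check $h \in C^\infty$: because $g$ is compactly supported, as $u$ ranges over any bounded set the integrand $t \mapsto g(u-t)$ is supported in a fixed bounded region on which $\nu$ is finite, and each derivative $\partial^\alpha g$ is bounded and compactly supported, so one may differentiate under the integral sign to get $\partial^\alpha h(u) = \int_{\mathbb{R}^d} (\partial^\alpha g)(u-t)\, d\nu(t)$ for every multi-index $\alpha$, the interchange being justified by dominated convergence. Hence $h \in C^\infty(\mathbb{R}^d)$ and $\nu \ast \rho$ is the desired measure.

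The main obstacle is this last step: controlling $\nu$ well enough to pass derivatives through the integral and to guarantee finiteness of $h$. The estimate $\nu(K) \leq C\,\operatorname{diam}(K)^d$ for compact $K$ from Proposition \ref{prop 2} is precisely what makes $\nu$ finite on bounded sets, hence $g \ast \nu$ finite and smooth. Using a compactly supported mollifier rather than, say, a Gaussian keeps the verification elementary, since then only local finiteness of $\nu$ — and no decay rate at infinity — is needed.
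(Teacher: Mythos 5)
Your proposal is correct, and it rests on the same two pillars as the paper's own proof: Proposition \ref{prop 4.3} (closure of $\mathcal{B}_B(\mu)_{p,q}$ and $\mathcal{F}_{A,B}(\mu)_{p,q}$ under convolution with Borel probability measures) and convolution with a compactly supported $C^\infty$ mollifier of total mass one. The execution differs in one structural respect. The paper smooths in two stages: it first convolves $\nu$ with $\chi_{[0,1]}\,dm$ and computes that the result is absolutely continuous with density $t \mapsto \nu([t-1,t])$; only then does it convolve with $g\,dm$, so that smoothness of the final density follows from two quotable facts from Folland about convolutions of \emph{functions} (that $d(\mu_0\ast\nu_0)=(f\ast g)\,dm$ for absolutely continuous measures, and that $f\ast g\in C^k$ when $f$ is locally integrable and $g\in C^k$ has bounded derivatives). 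You instead convolve once, identify the density of $\nu\ast\rho$ as $g\ast\nu$ via Tonelli, and prove its smoothness directly by differentiating under the integral against the measure $\nu$. Your route is shorter and works verbatim in $\mathbb{R}^d$ (the paper's computation is written on $\mathbb{R}$), but it must supply by hand exactly what Folland's lemma packages, namely local finiteness of $\nu$; you correctly extract this from Proposition \ref{prop 2}, and in fact the paper's Tonelli computation also tacitly needs the $\sigma$-finiteness established there. One small caveat: Proposition \ref{prop 2} is stated for $\mu$ a Borel \emph{probability} measure, while the corollary only assumes $\mu$ finite; to be complete you should note that for nonzero finite $\mu$ the measure $\nu$ is also a $(p,q)$-Bessel measure for the normalization $\mu/\mu(\mathbb{R}^d)$ (with a rescaled bound), so the diameter estimate, hence local finiteness, still applies.
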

\begin{proof}
Let $ \nu $ be a $(p,q)$-Bessel/frame measure for $\mu$. Convoluting $ \nu $ with the Lebesgue measure on $ [0,1] $ we have
\begin{align*}
\nu*\chi_{[0 ,1]}dm (E)&=\int_{\mathbb{R}}\int_{\mathbb{R}} \chi_E (x+y)d\nu(x) \chi_{[0 ,1]}(y) dm (y) =\int_{\mathbb{R}}\int_{\mathbb{R}} \chi_E (t) \chi_{[0 ,1]}(t-x) d\nu(x)dm (t-x)\\
&=\int_{\mathbb{R}}\chi_E (t) \nu([t-1 , t])dm(t) =\int_E  \nu([t-1 , t])dm,
\end{align*}
where $ E $ is any Borel subset of $ \mathbb{R} $. Thus, we obtained a $(p,q)$-Bessel/frame measure for $ \mu $ which is absolutely continuous with respect to the Lebesgue measure.
 
Now consider the following two propositions from \cite{10}.\\  
$ (i) $ If $ d\mu= fdm $ and $ d\nu= gdm $, then $ d(\mu\ast \nu) = (f \ast g) dm $.\\
$ (ii) $ If $ f\in L^1 $ (or $ f $ is locally integrable on $ \mathbb{R}^d $), $ g\in C^k $, and $ \partial^\alpha $ is bounded for $ |\alpha|\leq k $, then $ f \ast g \in C^k  $ and $ \partial^\alpha (f \ast g) = f \ast(\partial^\alpha g) $ for $ |\alpha|\leq k $.

 Let $ g \geq 0 $ be a compactly supported $ C^\infty $-function with $ \int g(t) dt = 1 $. Let $ d\mu_0 = gdm $ and \\
 $ d\nu_0 = \nu \ast \chi_{[0 ,1]}dm $. Then we have $ d(\mu_0\ast \nu_0) = ( \nu([\cdot-1 , \cdot]) \ast g) dm $ and $\nu([\cdot-1 , \cdot]) \ast g \in C^\infty $.
\end{proof}
\begin{definition}[\cite{5}]
A sequence of Borel probability measures \{$ \lambda_n\} $ is called an \emph{approximate identity} if 
\begin{equation*}
 sup\{\parallel t\parallel : t\in supp \lambda_n\} \rightarrow 0  \quad \text{as}\quad  n\rightarrow \infty .
\end{equation*} 
\end{definition}
\begin{lemma}[\cite{5}]\label{lem 6}
Let $ \{\lambda_n\} $ be an approximate identity. If $ f $ is a continuous function on $\mathbb{R}^d $, then for any $ x\in\mathbb{R}^d $,\;$\int f(x+t)\ d\lambda_n(t) \rightarrow f(x)$  as  $n \rightarrow \infty$.
\end{lemma}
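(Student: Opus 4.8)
The plan is to exploit the fact that each $\lambda_n$ is a probability measure, which lets me rewrite the constant $f(x)$ as an integral against $\lambda_n$ and thereby reduce the whole statement to controlling the oscillation of $f$ near $x$. First I would observe that, since $\lambda_n(\mathbb{R}^d)=1$, one has $f(x)=\int f(x)\,d\lambda_n(t)$, so that
\begin{equation*}
\left| \int f(x+t)\,d\lambda_n(t) - f(x) \right| \leq \int |f(x+t)-f(x)|\,d\lambda_n(t).
\end{equation*}
This converts the claim into a statement about how small the integrand $|f(x+t)-f(x)|$ is on the support of $\lambda_n$. Note that for each $n$ the support of $\lambda_n$ is bounded (its radius is a finite number tending to $0$), so continuity of $f$ guarantees that $t\mapsto f(x+t)$ is $\lambda_n$-integrable and the integrals above are well defined.

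Next I would invoke the continuity of $f$ at the fixed point $x$: given $\epsilon>0$, choose $\delta>0$ so that $\|t\|<\delta$ implies $|f(x+t)-f(x)|<\epsilon$. The defining property of an approximate identity, namely $\sup\{\|t\|:t\in\operatorname{supp}\lambda_n\}\to 0$, then furnishes an index $N$ such that for all $n\geq N$ the support of $\lambda_n$ is contained in the ball $\mathsf{B}(0,\delta)$. On that support the integrand is uniformly bounded by $\epsilon$, whence
\begin{equation*}
\int |f(x+t)-f(x)|\,d\lambda_n(t) \leq \epsilon\,\lambda_n(\mathbb{R}^d) = \epsilon \qquad (n\geq N).
\end{equation*}
Since $\epsilon>0$ was arbitrary, the desired convergence $\int f(x+t)\,d\lambda_n(t)\to f(x)$ follows.

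The argument is essentially routine, and I do not anticipate a serious obstacle. The only point that merits a little attention is that I need only the \emph{pointwise} continuity of $f$ at the single point $x$, not uniform continuity: because the mass of $\lambda_n$ concentrates at the origin, the shifted point $x+t$ is eventually forced into an arbitrarily small neighborhood of $x$, so local information about $f$ at $x$ suffices. The probability normalization $\lambda_n(\mathbb{R}^d)=1$ is what allows the constant term $f(x)$ to be absorbed into the integral and does the decisive bookkeeping in the estimate.
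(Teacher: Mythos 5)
Your proof is correct and complete. The paper itself gives no proof of this lemma---it is imported directly from \cite{5}---and your argument (using $\lambda_n(\mathbb{R}^d)=1$ to absorb $f(x)$ into the integral, then combining pointwise continuity of $f$ at $x$ with the shrinking supports of the $\lambda_n$) is precisely the standard one behind that citation, so there is nothing substantive to compare.
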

By Proposition \ref{prop 4.3}, if $ \nu $ is a $(p,q)$-Bessel/frame measure for $ \mu $, then $ \nu \ast \lambda $ is a $(p,q)$-Bessel/frame measure for $ \mu $ with the same bound(s), where $ \lambda $ is any Borel probability measure. An obvious question is under what conditions the converse is true. The next theorem gives an answer.
\begin{theorem}
Let $ 1< p, q < \infty $ and $ \dfrac{1}{p} + \dfrac{1}{q} =1 $. Let $ \{\lambda_n\} $ be an approximate identity. Suppose $ \nu $ is a $ \sigma $-finite Borel measure, and suppose all measures $ \nu \ast \lambda_n $ are $(p,q)$-Bessel/frame measures for $ \mu $ with uniform bounds, independent of $ n $. Then $ \nu $ is a $(p,q)$-Bessel/frame measure.
\end{theorem}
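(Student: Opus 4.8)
The plan is to transfer the uniform bounds from the mollified measures $\nu\ast\lambda_n$ back to $\nu$ by a limiting argument, settling the upper (Bessel) bound with Fatou's lemma and the lower (frame) bound with a translation-continuity estimate in $L^q(\nu)$. Fix $f\in L^p(\mu)$ and write $F:=\widehat{fd\mu}$; since $\mu$ is finite we have $f\in L^1(\mu)$, so $F$ is continuous and bounded and $g:=|F|^q=|[f,e_{\cdot}]_{L^p(\mu)}|^q$ is a nonnegative continuous function. Setting $g_n(t):=\int_{\mathbb{R}^d}g(t+s)\,d\lambda_n(s)$, Tonelli's theorem (valid since $\nu$ is $\sigma$-finite and $\lambda_n$ is finite) and the definition of convolution give
\begin{equation*}
\int_{\mathbb{R}^d} g_n \, d\nu = \int_{\mathbb{R}^d}\int_{\mathbb{R}^d} g(t+s)\, d\lambda_n(s)\, d\nu(t) = \int_{\mathbb{R}^d} g \, d(\nu\ast\lambda_n),
\end{equation*}
so the uniform $(p,q)$-bounds for $\nu\ast\lambda_n$ read $A\|f\|_{L^p(\mu)}^q \le \int_{\mathbb{R}^d} g_n\,d\nu \le B\|f\|_{L^p(\mu)}^q$ for every $n$. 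Because $\{\lambda_n\}$ is an approximate identity and $g$ is continuous, Lemma~\ref{lem 6} gives $g_n(t)\to g(t)$ for each $t$, whence Fatou's lemma yields $\int g\,d\nu = \int\liminf_n g_n\,d\nu \le \liminf_n\int g_n\,d\nu \le B\|f\|_{L^p(\mu)}^q$. This settles the Bessel case, and in either case shows $F\in L^q(\nu)$; consequently the operator $T_\nu:L^p(\mu)\to L^q(\nu)$, $T_\nu h=\widehat{hd\mu}$, is bounded with norm at most $B^{1/q}$.

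For the frame lower bound it remains to prove $\int g_n\,d\nu\to\int g\,d\nu$, for then $A\|f\|_{L^p(\mu)}^q \le \lim_n\int g_n\,d\nu = \int g\,d\nu$. Using the identity $\widehat{(e_{-s}f)d\mu}(t)=F(t+s)$ (the translation computation of Proposition~\ref{prop 4.3}) together with the boundedness of $T_\nu$,
\begin{equation*}
\left(\int_{\mathbb{R}^d} |F(t+s) - F(t)|^q\, d\nu(t)\right)^{1/q} = \| T_\nu(e_{-s}f - f) \|_{L^q(\nu)} \le B^{1/q}\,\| e_{-s}f - f \|_{L^p(\mu)}.
\end{equation*}
Since $s\mapsto e_{-s}f$ is continuous from $\mathbb{R}^d$ into $L^p(\mu)$ (dominated convergence, as $|e_{-s}(x)-1|^p|f(x)|^p\le 2^p|f(x)|^p$), the right-hand side tends to $0$ as $s\to0$; that is, translation is continuous in $L^q(\nu)$ along the orbit of $F$.

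To pass from $F$ to $g=|F|^q$ I would use the elementary bound $\left||a|^q-|b|^q\right|\le q(|a|+|b|)^{q-1}|a-b|$ (mean value theorem) and Hölder's inequality with exponents $\tfrac{q}{q-1},q$, obtaining
\begin{equation*}
\int_{\mathbb{R}^d} |g(t+s) - g(t)|\, d\nu(t) \le q\,\| |F(\cdot+s)| + |F| \|_{L^q(\nu)}^{q-1}\,\| F(\cdot+s) - F \|_{L^q(\nu)},
\end{equation*}
whose right-hand side tends to $0$ as $s\to0$, the first factor being uniformly bounded by $(2B^{1/q}\|f\|_{L^p(\mu)})^{q-1}$. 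Writing $h(s)$ for the left-hand side (so $h(0)=0$ and $h(s)\to0$ as $s\to0$) and applying Tonelli once more, $\int |g_n-g|\,d\nu \le \int h(s)\,d\lambda_n(s) \le \sup_{s\in\operatorname{supp}\lambda_n} h(s)$, which tends to $0$ because $\operatorname{supp}\lambda_n$ shrinks to $\{0\}$. Hence $\int g_n\,d\nu\to\int g\,d\nu$, delivering the lower bound and completing the frame case.

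The main obstacle is exactly this lower bound: Fatou's lemma points the wrong way for it, and since $\nu$ is only $\sigma$-finite the mere pointwise (or even uniform) convergence $g_n\to g$ does not control the integrals over all of $\mathbb{R}^d$. The crux is therefore the $L^q(\nu)$ translation-continuity estimate, which exploits the Bessel bound already established in the first step to upgrade pointwise convergence to genuine $L^1(\nu)$-convergence $g_n\to g$.
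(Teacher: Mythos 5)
Your proof is correct and follows the same overall strategy as the paper: Lemma \ref{lem 6} plus Fatou's lemma for the Bessel bound, and the convergence $\int |\widehat{fd\mu}|^q \, d(\nu\ast\lambda_n) \rightarrow \int |\widehat{fd\mu}|^q \, d\nu$ for the frame lower bound. The only difference is that the paper merely asserts this convergence and refers to \cite{5} for its proof, whereas you establish it in full: your bootstrapped translation-continuity estimate $\|F(\cdot+s)-F\|_{L^q(\nu)} \leq B^{1/q}\|e_{-s}f-f\|_{L^p(\mu)}$, combined with the elementary bound $\bigl||a|^q-|b|^q\bigr| \leq q(|a|+|b|)^{q-1}|a-b|$ and H\"older's inequality, is exactly the $L^q$-generalization of the $L^2$ argument in the cited reference, and supplying it is a genuine improvement since that reference only treats the case $q=2$.
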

\begin{proof}
Take $ f\in L^p(\mu) $. Since $| [f, e_.]_{L^p(\mu)}|^q $ is continuous on $\mathbb{R}^d $, by Lemma \ref{lem 6} and Fatou's lemma we have
 \begin{align*}
\int_{\mathbb{R}^d} |[f,e_x]_{L^p(\mu)}|^q d\nu(x) & \leq \liminf_n \int_{\mathbb{R}^d} \int_{\mathbb{R}^d} |[f,e_{x+t}]_{L^p(\mu)}|^q d\lambda_n(t)\ d\nu(x)\\
&= \liminf_n \int_{\mathbb{R}^d} |[f,e_y]_{L^p(\mu)}|^q d(\nu \ast \lambda_n)(y)\\
&\leq B\| f \|_{L^p(\mu)}^q. 
\end{align*}
Hence $ \nu $ is a $(p,q)$-Bessel measure with the same bound $ B $ as $ \nu \ast \lambda_n $.

Now showing that 
\begin{equation*}
\int_{\mathbb{R}^d} |[f,e_x]_{L^p(\mu)}|^q d(\nu \ast \lambda_n) \rightarrow \int_{\mathbb{R}^d} |[f,e_x]_{L^p(\mu)}|^q d\nu,
\end{equation*}
gives the lower bound (see \cite{5}). 
\end{proof}
We need the following two propositions from \cite{5} to present a general way of constructing $ (p,q) $-Bessel/frame measures for a given measure. 
\begin{proposition}[\cite{5}]
Let $ \mu $ and $ \mu' $ be Borel probability measures. For $ f\in L^1(\mu) $, the measure $ (f d\mu)\ast \mu' $ is absolutely continuous w.r.t. $ \mu \ast \mu' $ and if the Radon-Nikodym derivative is denoted by $ P f $, then
\begin{equation*}
P f = \frac{(f d\mu)\ast\mu'}{d(\mu\ast\mu')}.
\end{equation*}
\end{proposition}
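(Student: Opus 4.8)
The plan is to reduce the statement to a single fact---that $(f\,d\mu)\ast\mu'$ charges no $\mu\ast\mu'$-null set---since once absolute continuity is established the Radon--Nikodym theorem produces the derivative $Pf$, and the displayed formula is then merely the definition of that derivative. The natural tool is to view both convolutions as iterated integrals. Recall that for a finite (complex) measure $\sigma$ and a probability measure $\mu'$ one has, for every bounded Borel $g$,
\[
\int g\,d(\sigma\ast\mu')=\int\!\!\int g(x+y)\,d\sigma(x)\,d\mu'(y),
\]
i.e.\ $\sigma\ast\mu'$ is the image of $\sigma\times\mu'$ under the addition map $(x,y)\mapsto x+y$. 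Applying this with $\sigma=\mu$ and with $\sigma=f\,d\mu$ will be the starting point; the hypothesis $f\in L^1(\mu)$ guarantees that $f\,d\mu$ is a finite complex measure and, since $\int\!\!\int|f(x)|\,d\mu(x)\,d\mu'(y)=\|f\|_{L^1(\mu)}<\infty$, that Fubini's theorem legitimately interchanges the two integrations throughout.

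First I would fix a Borel set $E\subseteq\mathbb{R}^d$ and, writing $E-y=\{x:x+y\in E\}$, record the two identities
\[
(\mu\ast\mu')(E)=\int \mu(E-y)\,d\mu'(y),\qquad
\big((f\,d\mu)\ast\mu'\big)(E)=\int\Big(\int_{E-y}f\,d\mu\Big)d\mu'(y),
\]
both obtained by inserting $g=\chi_E$ into the formula above and applying Fubini. The measurability of the slice functions $y\mapsto\mu(E-y)$ and $y\mapsto\int_{E-y}f\,d\mu$ is part of the Fubini conclusion.

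Next comes the null-set argument, which is the heart of the matter. Suppose $(\mu\ast\mu')(E)=0$. Because $\mu(E-y)\ge 0$, the first identity forces $\mu(E-y)=0$ for $\mu'$-almost every $y$. For each such $y$ the inner integral $\int_{E-y}f\,d\mu$ is an integral over a $\mu$-null set and hence vanishes---crucially, this step is insensitive to the sign or complex phase of $f$, so no separate treatment of real/imaginary or positive/negative parts is needed. The second identity then gives $\big((f\,d\mu)\ast\mu'\big)(E)=0$. Since this holds for every Borel $E$, the complex measure $(f\,d\mu)\ast\mu'$ vanishes on every $\mu\ast\mu'$-null set, i.e.\ $|(f\,d\mu)\ast\mu'|\ll\mu\ast\mu'$, and the Radon--Nikodym theorem yields $Pf$ with the asserted representation. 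I expect the only genuinely delicate points to be bookkeeping ones: justifying the Fubini interchanges and the measurability of the slices, and confirming that $\mu\ast\mu'$ is finite (indeed a probability measure) so that Radon--Nikodym applies. The measure-theoretic core is the elementary observation that integrating an $L^1$ function over a null set gives zero.
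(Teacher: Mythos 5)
Your argument is correct. Note that the paper itself gives no proof of this proposition: it is quoted verbatim from reference [5] (Dutkay--Han--Weber) as a tool, so there is no in-paper proof to compare against. Your route --- realizing $\sigma\ast\mu'$ as the pushforward of $\sigma\times\mu'$ under addition, writing $(\mu\ast\mu')(E)=\int\mu(E-y)\,d\mu'(y)$ and $\bigl((f\,d\mu)\ast\mu'\bigr)(E)=\int\bigl(\int_{E-y}f\,d\mu\bigr)d\mu'(y)$, and then running the null-set argument through Fubini before invoking Radon--Nikodym --- is the standard and essentially the only natural proof, and all the delicate points you flag (measurability of the slices, integrability $\iint|f(x)|\,d\mu(x)\,d\mu'(y)=\|f\|_{L^1(\mu)}<\infty$, finiteness of $\mu\ast\mu'$, and the passage from ``vanishes on every null set'' to $|(f\,d\mu)\ast\mu'|\ll\mu\ast\mu'$) are handled correctly.
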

\begin{proposition}[\cite{5}]
Let $ \mu$, $ \mu' $ be two Borel probability measures and $ 1 \leq p\leq\infty $. if $ f\in L^p(\mu) $, then the function $ P f $ is also in $ L^p( \mu \ast \mu') $ and \begin{equation*}
\| P f\|_{ L^p( \mu \ast \mu')}\  \leq\  \| f\|_{ L^p( \mu )}.
\end{equation*}
\end{proposition}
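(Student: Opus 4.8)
The plan is to show that $P$ is an $L^p$-contraction by combining the defining property of the Radon--Nikodym derivative $Pf$ with H\"older's inequality on the product space $\mathbb{R}^d\times\mathbb{R}^d$ carrying the product measure $\mu\times\mu'$. Conceptually $P$ is a conditional expectation along the sum map $(x,y)\mapsto x+y$, and conditional expectations never increase $L^p$ norms; the duality computation below is a self-contained way of packaging this fact.

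First I would move everything onto the product space. Since the previous proposition gives $Pf\,d(\mu\ast\mu') = d\big((f\,d\mu)\ast\mu'\big)$, the definition of convolution of measures yields, for every bounded Borel function $g$ on $\mathbb{R}^d$,
\begin{equation*}
\int_{\mathbb{R}^d} g\cdot Pf\,d(\mu\ast\mu') = \int_{\mathbb{R}^d} g\,d\big((f\,d\mu)\ast\mu'\big) = \int_{\mathbb{R}^d}\int_{\mathbb{R}^d} g(x+y)\,f(x)\,d\mu(x)\,d\mu'(y).
\end{equation*}
I would also record two consequences of the push-forward rule $\int_{\mathbb{R}^d} h\,d(\mu\ast\mu') = \iint h(x+y)\,d\mu(x)\,d\mu'(y)$: taking $h=|g|^q$ gives $\|g\|_{L^q(\mu\ast\mu')}^q = \iint |g(x+y)|^q\,d\mu(x)\,d\mu'(y)$, and, using that $\mu'$ is a probability measure, Tonelli's theorem gives $\iint |f(x)|^p\,d\mu(x)\,d\mu'(y) = \|f\|_{L^p(\mu)}^p$.

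Next, with $q$ the conjugate exponent of $p$, I would estimate the norm through the duality formula $\|Pf\|_{L^p(\mu\ast\mu')} = \sup\{\,|\int g\cdot Pf\,d(\mu\ast\mu')| : \|g\|_{L^q(\mu\ast\mu')}\le 1\,\}$, where it suffices to let $g$ range over bounded Borel functions, for which the identity of the previous step applies directly. H\"older's inequality on $(\mathbb{R}^d\times\mathbb{R}^d,\ \mu\times\mu')$ applied to the double integral then gives
\begin{equation*}
\Big|\int_{\mathbb{R}^d} g\cdot Pf\,d(\mu\ast\mu')\Big| \le \Big(\iint |g(x+y)|^q\,d\mu\,d\mu'\Big)^{1/q}\Big(\iint |f(x)|^p\,d\mu\,d\mu'\Big)^{1/p} = \|g\|_{L^q(\mu\ast\mu')}\,\|f\|_{L^p(\mu)},
\end{equation*}
and taking the supremum over $\|g\|_{L^q(\mu\ast\mu')}\le 1$ yields $\|Pf\|_{L^p(\mu\ast\mu')}\le\|f\|_{L^p(\mu)}$; in particular $Pf\in L^p(\mu\ast\mu')$.

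I do not anticipate a genuine obstacle. The only points needing care are the endpoint exponents: for $p=1$ and $p=\infty$ one checks that the duality characterization of the norm remains valid because $\mu\ast\mu'$ is finite (hence $\sigma$-finite), and the interchange of integration order is justified by Tonelli's theorem applied to the nonnegative integrands. If a more conceptual write-up is preferred, the same bound follows immediately from Jensen's inequality for the convex map $z\mapsto|z|^p$ applied to the conditional-expectation representation of $Pf$ noted above.
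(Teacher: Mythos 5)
Your proof is correct, but a point of comparison is worth making explicit: the paper does not prove this statement at all --- it is imported verbatim from reference \cite{5} (Dutkay--Han--Weber) as a known tool, so there is no in-paper argument to measure yours against. Judged on its own, your duality argument is sound and self-contained: the identity $\int g\, Pf\, d(\mu\ast\mu') = \iint g(x+y)f(x)\,d\mu(x)\,d\mu'(y)$ follows from the Radon--Nikodym property together with the definition of convolution of the (finite, complex) measure $f\,d\mu$ with $\mu'$; H\"older on $(\mathbb{R}^d\times\mathbb{R}^d,\mu\times\mu')$ plus the push-forward formula correctly converts the two factors into $\|g\|_{L^q(\mu\ast\mu')}$ and $\|f\|_{L^p(\mu)}$ (using that $\mu'$ is a probability measure); and the duality characterization of the $L^p$ norm via bounded Borel $g$ is legitimate here precisely because $\mu\ast\mu'$ is finite, which also guarantees that finiteness of the supremum forces $Pf\in L^p(\mu\ast\mu')$ --- you flagged exactly this point for the endpoints $p=1,\infty$. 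Two small things deserve a sentence in a final write-up: (i) $Pf$ is only defined via the preceding proposition for $f\in L^1(\mu)$, so you should note that $L^p(\mu)\subseteq L^1(\mu)$ because $\mu$ is a probability measure; (ii) for $q=\infty$ you need that $(\mu\ast\mu')$-null sets pull back to $(\mu\times\mu')$-null sets under $(x,y)\mapsto x+y$, which is immediate from the push-forward definition but is what makes $\|g\circ\mathrm{add}\|_{L^\infty(\mu\times\mu')}\le\|g\|_{L^\infty(\mu\ast\mu')}$ true. Your closing remark is also accurate: the conditional-expectation/Jensen formulation (or, alternatively, proving the cases $p=1$ and $p=\infty$ directly and interpolating) gives the same bound more conceptually, and is the style of argument one finds in \cite{5} itself.
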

Now we show that if a convolution of two measures admits a $ (p,q) $-Bessel/frame measure, then one can obtain a $ (p,q) $-Bessel/frame measure for one of the measures in the convolution by using the Fourier transform of the other measure in the convolution.
\begin{proposition}
Let $ \mu $, $ \mu' $ be two Borel probability measures. Let $ 1< p, q < \infty $ and $ \dfrac{1}{p} + \dfrac{1}{q} =1 $. If $ \nu $ is a $ (p,q) $-Bessel measure for $ \mu \ast \mu' $, then $ | \hat{\mu'} |^q d\nu $ is a $ (p,q) $-Bessel measure for $ \mu $ with the same bound.

If in addition $ \nu $ is a $ (p,q) $-frame measure for $ ( \mu \ast \mu') $ with bounds $ A $ and $ B $, and for all $ f \in L^p(\mu) $, $ c\| f \|_{L^p(\mu)}^q \leq \| P f\|_{ L^p( \mu \ast \mu')}^q $, then $ | \hat{\mu}' |^q d\nu $ is a $ (p,q) $-frame measure for $ \mu $ with bounds $ c A $ and $ B $.
 \end{proposition}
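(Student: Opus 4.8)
The plan is to reduce both assertions to a single Fourier-analytic identity that transports the integrand from $\mu$ to the convolution $\mu \ast \mu'$. First I would establish
\begin{equation*}
[Pf, e_t]_{L^p(\mu \ast \mu')} = \hat{\mu'}(t)\,[f, e_t]_{L^p(\mu)} \qquad (t \in \mathbb{R}^d, \ f \in L^p(\mu)).
\end{equation*}
By Remark \ref{2.11} the left-hand side is $\widehat{(Pf)\,d(\mu \ast \mu')}(t)$. The first of the two preceding propositions from \cite{5} says exactly that $(Pf)\,d(\mu \ast \mu') = (f\,d\mu)\ast \mu'$ as measures, so this equals $\widehat{(f\,d\mu)\ast \mu'}(t)$. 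Expanding the convolution by Fubini's theorem factors the Fourier transform as $\widehat{f\,d\mu}(t)\,\hat{\mu'}(t)$, and a second application of Remark \ref{2.11} rewrites $\widehat{f\,d\mu}(t)$ as $[f, e_t]_{L^p(\mu)}$, giving the identity.

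With this identity the Bessel half is a direct substitution. The second of these propositions from \cite{5} guarantees $Pf \in L^p(\mu \ast \mu')$, so I may feed $Pf$ into the $(p,q)$-Bessel inequality for $\nu$ and use the identity to get
\begin{equation*}
\int_{\mathbb{R}^d} |[f, e_t]_{L^p(\mu)}|^q\, |\hat{\mu'}(t)|^q\, d\nu(t) = \int_{\mathbb{R}^d} |[Pf, e_t]_{L^p(\mu \ast \mu')}|^q\, d\nu(t) \leq B\,\|Pf\|_{L^p(\mu \ast \mu')}^q.
\end{equation*}
The left-hand side is precisely the integral of $|[f,e_t]_{L^p(\mu)}|^q$ against the measure $|\hat{\mu'}|^q\,d\nu$ (which is a genuine Borel measure dominated by $\nu$, since $|\hat{\mu'}| \leq \mu'(\mathbb{R}^d) = 1$), and the same proposition bounds $\|Pf\|_{L^p(\mu \ast \mu')}^q \leq \|f\|_{L^p(\mu)}^q$. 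Chaining the two inequalities shows that $|\hat{\mu'}|^q\,d\nu$ is a $(p,q)$-Bessel measure for $\mu$ with the same bound $B$.

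For the frame statement I would repeat the substitution against the lower $(p,q)$-frame bound. Applying it to $Pf$ gives
\begin{equation*}
A\,\|Pf\|_{L^p(\mu \ast \mu')}^q \leq \int_{\mathbb{R}^d} |[Pf, e_t]_{L^p(\mu \ast \mu')}|^q\, d\nu(t) = \int_{\mathbb{R}^d} |[f, e_t]_{L^p(\mu)}|^q\, |\hat{\mu'}(t)|^q\, d\nu(t),
\end{equation*}
and the extra hypothesis $c\,\|f\|_{L^p(\mu)}^q \leq \|Pf\|_{L^p(\mu \ast \mu')}^q$ turns the left-hand side into $cA\,\|f\|_{L^p(\mu)}^q$. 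Together with the upper bound from the Bessel half this exhibits $|\hat{\mu'}|^q\,d\nu$ as a $(p,q)$-frame measure for $\mu$ with bounds $cA$ and $B$.

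The only point needing care is the convolution identity $\widehat{(f\,d\mu)\ast \mu'} = \widehat{f\,d\mu}\cdot\hat{\mu'}$: one must verify that $f\,d\mu$ is a finite complex measure so that its convolution with the probability measure $\mu'$ is well defined and Fubini applies. Since $\mu$ is finite and $f \in L^p(\mu) \subseteq L^1(\mu)$, this is routine, and the rest of the argument is mechanical substitution.
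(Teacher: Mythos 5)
Your proof is correct and follows essentially the same route as the paper's: both factor $\widehat{(f\,d\mu)\ast\mu'}=\widehat{f\,d\mu}\cdot\hat{\mu'}$, identify $(f\,d\mu)\ast\mu'$ with $(Pf)\,d(\mu\ast\mu')$ via the cited Radon--Nikodym proposition, and then apply the Bessel/frame inequalities for $\nu$ to $Pf$ together with $\|Pf\|_{L^p(\mu\ast\mu')}\leq\|f\|_{L^p(\mu)}$ and the hypothesis $c\|f\|^q_{L^p(\mu)}\leq\|Pf\|^q_{L^p(\mu\ast\mu')}$. Your write-up merely makes explicit a few routine verifications (that $f\,d\mu$ is a finite complex measure, and that $|\hat{\mu'}|\leq 1$) which the paper leaves implicit.
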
  
 \begin{proof}
If $ \mu,\nu \in M(\mathbb{R}^d) $, then $ \widehat{\mu\ast\nu} = \hat{\mu}.\hat{\nu} $ (see\cite{10}). Take $ f \in L^p(\mu) $. Then
 \begin{equation*}
  \int_{\mathbb{R}^d} |\widehat{(f d\mu)}|^q . | \hat{\mu'} |^q d\nu = \int_{\mathbb{R}^d} |\widehat{(f d\mu)\ast \mu' |}^q d\nu = \int_{\mathbb{R}^d} |\widehat{P f d(\mu \ast \mu')}|^q d\nu.
 \end{equation*}
 Thus, we have
 \begin{equation*}
 cA\| f \|_{L^p(\mu)}^q\leq A \| P f\|_{ L^p( \mu \ast \mu')}^q \leq \int_{\mathbb{R}^d} |\widehat{P f d(\mu \ast \mu')}|^q d\nu \leq B \| P f\|_{ L^p( \mu \ast \mu')}^q \leq B\| f \|_{L^p(\mu)}^q.
 \end{equation*}
 \end{proof}
 In the next theorem we have some stability results. In fact, this theorem is a generalization of Proposition \ref{2.21}.
\begin{theorem}\label{theo 219}
Let $ \mu $ be a compactly supported Borel probability measure. Let $ 1< p, q < \infty $ and $ \dfrac{1}{p} + \dfrac{1}{q} =1 $. If $ \nu$ is a $ (p,q) $-Bessel measure for $ \mu $, then for any $ r > 0 $ there exists a constant $ D > 0 $ such that
\begin{equation*}
\int_{\mathbb{R}^d} \sup_{|y|\leq r} |[f,e_{x+y}]_{L^p(\mu)}|^q d\nu(x) \leq D \| f \|_{L^p(\mu)}^q, \qquad \text{for all}\; f \in L^p(\mu). 
\end{equation*}
\par If $ \nu $ is a $ (p,q) $-frame measure for $ \mu $, then there exist constants $ \delta > 0 $ and $ C > 0 $ such that 
\begin{equation*}
C\| f \|_{L^p(\mu)}^q \leq \int_{\mathbb{R}^d} \inf_{|y|\leq \delta} |[f,e_{x+y}]_{L^p(\mu)}|^q d\nu(x), \qquad \text{for all}\; f \in L^p(\mu).
\end{equation*}
\end{theorem}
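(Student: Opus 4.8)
The plan is to mirror the strategy of Proposition \ref{2.21}, exploiting the analyticity of the Fourier transform of a compactly supported measure. Write $F(x) := \widehat{fd\mu}(x) = [f, e_x]_{L^p(\mu)}$ and fix $M > 0$ with $\operatorname{supp}\mu \subseteq [-M,M]^d$. Since $f\in L^1(\mu)$ by H\"older and $fd\mu$ is a finite, compactly supported measure, $F$ extends to an entire function admitting the multivariate Taylor expansion, and as in Proposition \ref{2.21} one has, for every multi-index $\alpha = (\alpha_1,\dots,\alpha_d)$, the derivative formula
\begin{equation*}
\partial^\alpha F(x) = \left[ \textstyle\prod_{j=1}^d (-2\pi i x_j)^{\alpha_j}\, f,\ e_x \right]_{L^p(\mu)},
\end{equation*}
together with the pointwise bound $\bigl|\prod_j(-2\pi i \xi_j)^{\alpha_j}\bigr| \le (2\pi M)^{|\alpha|}$ on $\operatorname{supp}\mu$, so that $\|\prod_j(-2\pi i x_j)^{\alpha_j} f\|_{L^p(\mu)} \le (2\pi M)^{|\alpha|}\|f\|_{L^p(\mu)}$. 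Since $F$ is continuous, the maps $x \mapsto \sup_{|y|\le r}|F(x+y)|$ and $x \mapsto \inf_{|y|\le\delta}|F(x+y)|$ are measurable (each sup/inf may be taken over a countable dense subset of the ball), so the integrals in the statement are well defined.

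For the Bessel estimate I would start from $|F(x+y)| \le |F(x)| + |F(x+y)-F(x)|$ and the expansion $F(x+y)-F(x) = \sum_{|\alpha|\ge 1} \frac{\partial^\alpha F(x)}{\alpha!}\, y^\alpha$. Since $|y|\le r$ forces $|y^\alpha| \le r^{|\alpha|}$, this gives $\sup_{|y|\le r}|F(x+y)| \le \sum_{\alpha} \frac{r^{|\alpha|}}{\alpha!}\,|\partial^\alpha F(x)|$, where the $\alpha=0$ term is $|F(x)|$. Applying Minkowski's inequality in $L^q(\nu)$ and then the $(p,q)$-Bessel bound $B$ to each term $\partial^\alpha F = [\prod_j(-2\pi i x_j)^{\alpha_j}f,\, e_\cdot]$, I obtain
\begin{equation*}
\left(\int_{\mathbb{R}^d}\sup_{|y|\le r}|F(x+y)|^q\,d\nu(x)\right)^{1/q} \le B^{1/q}\|f\|_{L^p(\mu)}\sum_{\alpha}\frac{(2\pi M r)^{|\alpha|}}{\alpha!} = B^{1/q}e^{2\pi M r d}\|f\|_{L^p(\mu)},
\end{equation*}
the multi-index sum factoring as $\prod_{j=1}^d e^{2\pi M r}=e^{2\pi M r d}$. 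Hence $D := B\,e^{2\pi M r d q}$ works.

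For the frame estimate the key is the pointwise inequality $|F(x)| \le \inf_{|y|\le\delta}|F(x+y)| + \sup_{|y|\le\delta}|F(x+y)-F(x)|$, obtained by writing $|F(x)| \le |F(x+y)| + |F(x+y)-F(x)|$ for each $y$ and taking the infimum over $|y|\le\delta$. Minkowski's inequality in $L^q(\nu)$ then gives
\begin{equation*}
\left(\int_{\mathbb{R}^d}|F(x)|^q d\nu\right)^{1/q} \le \left(\int_{\mathbb{R}^d}\inf_{|y|\le\delta}|F(x+y)|^q d\nu\right)^{1/q} + \left(\int_{\mathbb{R}^d}\sup_{|y|\le\delta}|F(x+y)-F(x)|^q d\nu\right)^{1/q}.
\end{equation*}
The left-hand side is bounded below by $A^{1/q}\|f\|_{L^p(\mu)}$ via the $(p,q)$-frame lower bound $A$, while the last term is controlled by the Bessel-type estimate of the previous paragraph applied to the increments (with $r=\delta$), contributing the factor $B^{1/q}(e^{2\pi M\delta d}-1)\to 0$ as $\delta\to0$. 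Rearranging yields
\begin{equation*}
\left(\int_{\mathbb{R}^d}\inf_{|y|\le\delta}|F(x+y)|^q d\nu\right)^{1/q} \ge \left(A^{1/q} - B^{1/q}\bigl(e^{2\pi M\delta d}-1\bigr)\right)\|f\|_{L^p(\mu)},
\end{equation*}
so choosing $\delta>0$ small enough that the bracket is positive and setting $C$ to its $q$-th power finishes the argument.

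The main obstacle I anticipate is the rigorous justification of exchanging the supremum with both the Taylor series and the $L^q(\nu)$-integral: one must confirm that the series of nonnegative functions $\sum_\alpha \frac{r^{|\alpha|}}{\alpha!}|\partial^\alpha F|$ converges in $L^q(\nu)$ before Minkowski's inequality may be applied term by term, which is guaranteed precisely by the summability of $(2\pi M r)^{|\alpha|}/\alpha!$ to $e^{2\pi M r d}<\infty$. The remaining points—measurability of the sup/inf functions and the passage of the frame lower bound through integration—are routine once this convergence is secured.
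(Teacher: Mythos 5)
Your proposal is correct and follows essentially the same route as the paper: the paper's proof of Theorem \ref{theo 219} simply defers to Theorem 2.10 of \cite{5}, whose argument is exactly the one you give — expand $\widehat{fd\mu}$ in a multivariate Taylor series (valid since $\mu$ is compactly supported), identify $\partial^\alpha \widehat{fd\mu}$ as the transform of $\prod_j(-2\pi i x_j)^{\alpha_j}f$, and apply the countable Minkowski inequality in $L^q(\nu)$ together with the Bessel bound termwise, choosing $\delta$ small for the lower bound. This is also the same technique the paper itself uses in Proposition \ref{2.21}, of which this theorem is the stated generalization.
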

\begin{proof}
The approach is completely similar to the proof of Theorem $ 2.10 $ from \cite{5}.
 \end{proof}
 
We show that by using this stability of $ (p,q) $-frame measures, one can obtain atomic $ (p,q) $-frame measures from a general $ (p,q) $-frame measure. 
\begin{definition}\label{4.12}
Let $ Q = [0,1)^d $ and $ r > 0 $. If $ \nu$ is a Borel measure on $ \mathbb{R}^d $ and if $ (x_k)_{k\in\mathbb{Z}^d} $ is a set of points such that for all $ k\in\mathbb{Z}^d $ we have $ x_k\in r(k +Q) $ and $ \nu(r(k +Q)) <\infty $, then a \emph{discretization of the measure $ \nu $} is defined by
\begin{equation*}
\nu' := \sum_{k\in\mathbb{Z}^d} \nu(r(k +Q))\delta_{x_k}.
\end{equation*}
\end{definition}
 \begin{theorem}\label{theo 220}
Let $ 1< p, q < \infty $ and $ \dfrac{1}{p} + \dfrac{1}{q} =1 $. If a compactly supported Borel probability measure $ \mu $ has a $(p,q)$-Bessel/frame measure $ \nu$, then it also has an atomic one. More precisely, if $ \nu$ is a $ (p,q) $-Bessel measure for $ \mu $ and if $ \nu' $ is a discretization of the measure $ \nu $, then $ \nu' $ is a $ (p,q) $-Bessel measure for $ \mu $.

If $ \nu $ is a $ (p,q) $-frame measure for $ \mu $ and $ r > 0 $ is small enough, then $ \nu' $ is a $ (p,q) $-frame measure for $ \mu $.
 \end{theorem}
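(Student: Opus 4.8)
The plan is to reduce both assertions to the stability estimates of Theorem \ref{theo 219}, exploiting the fact that the cubes $r(k+Q)$, $k\in\mathbb{Z}^d$, tile $\mathbb{R}^d$ disjointly. First I would unfold the integral against the discretized measure: by Definition \ref{4.12},
\begin{equation*}
\int_{\mathbb{R}^d} |[f,e_t]_{L^p(\mu)}|^q\, d\nu'(t) = \sum_{k\in\mathbb{Z}^d} \nu(r(k+Q))\,|[f,e_{x_k}]_{L^p(\mu)}|^q .
\end{equation*}
The geometric point is that $x_k$ and every $t\in r(k+Q)$ lie in a common cube of side length $r$, so $|x_k-t|\leq r\sqrt{d}$; writing $x_k=t+(x_k-t)$ then sandwiches $|[f,e_{x_k}]_{L^p(\mu)}|^q$ between the infimum and the supremum of $|[f,e_{t+y}]_{L^p(\mu)}|^q$ taken over $|y|\leq r\sqrt{d}$.

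For the Bessel assertion I would rewrite each summand as $\nu(r(k+Q))|[f,e_{x_k}]_{L^p(\mu)}|^q = \int_{r(k+Q)} |[f,e_{x_k}]_{L^p(\mu)}|^q\, d\nu(t)$ (the factor is constant in $t$ over the cube) and bound the integrand above by $\sup_{|y|\leq r\sqrt{d}} |[f,e_{t+y}]_{L^p(\mu)}|^q$. Summing over $k$ and using that the cubes partition $\mathbb{R}^d$ collapses the sum of cube-integrals into one integral over $\mathbb{R}^d$, giving
\begin{equation*}
\int_{\mathbb{R}^d} |[f,e_t]_{L^p(\mu)}|^q\, d\nu'(t) \leq \int_{\mathbb{R}^d} \sup_{|y|\leq r\sqrt{d}} |[f,e_{t+y}]_{L^p(\mu)}|^q\, d\nu(t) \leq D\|f\|_{L^p(\mu)}^q,
\end{equation*}
where the last step is the Bessel estimate of Theorem \ref{theo 219} applied with radius $r\sqrt{d}$. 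Hence $\nu'$ is a $(p,q)$-Bessel measure for \emph{any} choice of $r$.

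For the frame assertion the upper bound is already covered, since a $(p,q)$-frame measure is in particular $(p,q)$-Bessel. For the lower bound, let $\delta>0$ and $C>0$ be the constants furnished by the frame part of Theorem \ref{theo 219}, and choose $r$ small enough that $r\sqrt{d}\leq\delta$. Then for $t\in r(k+Q)$ one has $x_k=t+y$ with $|y|\leq\delta$, so $|[f,e_{x_k}]_{L^p(\mu)}|^q \geq \inf_{|y|\leq\delta} |[f,e_{t+y}]_{L^p(\mu)}|^q$; integrating over each cube, summing, and invoking the lower stability estimate yields
\begin{equation*}
\int_{\mathbb{R}^d} |[f,e_t]_{L^p(\mu)}|^q\, d\nu'(t) \geq \int_{\mathbb{R}^d} \inf_{|y|\leq\delta} |[f,e_{t+y}]_{L^p(\mu)}|^q\, d\nu(t) \geq C\|f\|_{L^p(\mu)}^q .
\end{equation*}
Thus $\nu'$ is a $(p,q)$-frame measure once $r$ is this small.

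The argument carries no serious analytic obstacle beyond Theorem \ref{theo 219}; the difficulty is entirely bookkeeping. The point requiring care is to feed the cube \emph{diameter} $r\sqrt{d}$, not the side length $r$, into the stability theorem, and, in the frame case, to observe that the threshold $\delta$ produced by Theorem \ref{theo 219} is fixed independently of the discretization scale, so the constraint $r\sqrt{d}\leq\delta$ can always be met by shrinking $r$ — this is precisely the hypothesis ``$r>0$ small enough.'' I would also note in passing that the condition $\nu(r(k+Q))<\infty$ built into Definition \ref{4.12}, guaranteed by the $\sigma$-finiteness from Proposition \ref{prop 2}, makes every term of the defining sum meaningful.
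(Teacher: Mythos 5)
Your proposal is correct and follows essentially the same route as the paper: both decompose $\int|[f,e_t]|^q\,d\nu'(t)$ as $\sum_{k}\nu(r(k+Q))|[f,e_{x_k}]|^q$, sandwich the perturbed exponentials between the infimum and supremum over shifts of size at most the cube diameter $r\sqrt{d}$, and invoke the stability estimates of Theorem \ref{theo 219}. If anything, your write-up is slightly more careful than the paper's, which contains a small slip (writing $\sup_{|y|\leq r}$ where the diameter $r'=r\sqrt{d}$ is needed) and leaves implicit the constraint $r\sqrt{d}\leq\delta$ that you state explicitly for the frame case.
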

\begin{proof} 
Let $ Q = [0,1)^d $. Let $ (x_k)_{k\in\mathbb{Z}^d} $ be a set of points such that $ x_k\in r(k +Q) $ for all $ k\in\mathbb{Z}^d $. For every $ x\in r(k +Q) $ define $ \epsilon(x) := x_k - x $. Thus, $ |\epsilon(x)| \leq r\sqrt{d} =:r' $ and for any $ f \in L^p(\mu) $,
\begin{align*} 
\int_{\mathbb{R}^d} |[f,e_{x +\epsilon(x)}]_{L^p(\mu)}|^q d\nu(x) &= \sum_{k\in\mathbb{Z}^d} \int_ {r(k +Q)} |[f,e_{x_k}]_{L^p(\mu)}|^q d\nu(x)\\
& = \sum_{k\in\mathbb{Z}^d}\nu(r(k +Q))|[f,e_{x_k}]_{L^p(\mu)}|^q.
\end{align*}

 Since we have
\begin{align*}
\int_{\mathbb{R}^d} \inf_{|y|\leq r'} |[f,e_{x+y}]_{L^p(\mu)}|^q d\nu(x)  & \leq \int_{\mathbb{R}^d} |[f,e_{x +\epsilon(x)}]_{L^p(\mu)}|^q d\nu(x)\\
 & \leq \int_{\mathbb{R}^d} \sup_{|y|\leq r} |[f,e_{x+y}]_{L^p(\mu)}|^q d\nu(x),
 \end{align*}
 the upper and lower bounds follow from Theorem \ref{theo 219}.   
\end{proof}  
By Lemma \ref{3.6}, if there exists a purely atomic $ (p,q) $-frame measure $ \nu $ for a probability measure $ \mu $, then there exists a $ q $-frame  for $ L^p(\mu) $. Now we conclude that if there exists a $ (p,q) $-frame measure $ \nu $ (not necessarily purely atomic) for a compactly supported probability measure $ \mu $, then there exists a $ q $-frame for $ L^p(\mu) $.              
 \begin{corollary}
Let $ \mu $ be a compactly supported Borel probability measure. Let $ 1< p, q < \infty $ and $ \dfrac{1}{p} + \dfrac{1}{q} =1 $. If $ \nu $ is a $ (p,q) $-frame measure for $ \mu $ with bounds $ A, B $ and $ r > 0 $ is sufficiently small, then there exist positive constants $ C, D $ such that $\{ c_ke_{x_k} : k\in\mathbb{Z}^d\} $ is a $ q $-frame for $ L^p(\mu) $ with bounds $ C, D $, where $ x_k\in r(k +Q) $ and $ c_k =\sqrt [q]{\nu(r(k +Q))} $. 
\end{corollary}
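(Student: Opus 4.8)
The plan is to obtain the claim as a direct composition of Theorem \ref{theo 220} with Lemma \ref{3.6}(ii); no new analytic estimate is needed, since the hard work has already been done in the stability and discretization results. First I would invoke Theorem \ref{theo 220}: because $\mu$ is a compactly supported Borel probability measure, $\nu$ is a $(p,q)$-frame measure for $\mu$ with bounds $A,B$, and $r>0$ is chosen sufficiently small, that theorem guarantees that the discretization $\nu' = \sum_{k\in\mathbb{Z}^d} \nu(r(k+Q))\,\delta_{x_k}$, with $x_k\in r(k+Q)$, is again a $(p,q)$-frame measure for $\mu$. I would first record that $\nu'$ is well defined and purely atomic: each frame measure is in particular a $(p,q)$-Bessel measure, so by Proposition \ref{prop 2} the measure $\nu$ is locally finite and assigns finite mass to every cube $r(k+Q)$, making each coefficient $\nu(r(k+Q))$ finite.

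Next I would match $\nu'$ to the hypothesis of Lemma \ref{3.6}(ii). That measure has exactly the purely-atomic form $\sum_{\lambda\in\Lambda} d_\lambda\,\delta_\lambda$ required there, with index set $\Lambda = \{x_k : k\in\mathbb{Z}^d\}$ and weights $d_{x_k} = \nu(r(k+Q))$; since the cubes $r(k+Q)$ are disjoint, the chosen representatives $x_k$ are distinct and no atoms collapse. As $\mu$ is a probability measure, Lemma \ref{3.6}(ii) applies and yields that $\bigl\{\sqrt[q]{\nu(r(k+Q))}\;e_{x_k}\bigr\}_{k\in\mathbb{Z}^d}$ is a $q$-frame for $L^p(\mu)$. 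Setting $c_k = \sqrt[q]{\nu(r(k+Q))}$ produces precisely the system $\{c_k e_{x_k} : k\in\mathbb{Z}^d\}$ in the statement.

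Finally I would trace the frame bounds. By the equality computation in Lemma \ref{3.6}(ii), the $q$-frame bounds of $\{c_k e_{x_k}\}$ coincide with the $(p,q)$-frame bounds of $\nu'$, and those are supplied by Theorem \ref{theo 220}, whose proof in turn draws the upper bound $D$ from the $\sup$-estimate and the lower bound $C$ from the $\inf$-estimate of Theorem \ref{theo 219}. Thus one obtains positive constants $C,D$ as claimed.

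The main obstacle is not analytic but organizational: one must confirm that the passage from the possibly continuous measure $\nu$ to the atomic $\nu'$ preserves \emph{both} frame bounds with constants independent of the particular choice of representatives $x_k\in r(k+Q)$. This uniformity is exactly what the $\sup$/$\inf$ formulation of Theorem \ref{theo 219} secures and what Theorem \ref{theo 220} already exploits, so the corollary follows without any further difficulty once these two results are cited in the correct order.
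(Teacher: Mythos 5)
Your proposal is correct and takes essentially the same route as the paper: the paper likewise invokes Theorems \ref{theo 220} and \ref{theo 219} to conclude that the discretization $\nu'=\sum_{k\in\mathbb{Z}^d}c_k^q\delta_{x_k}$ lies in $\mathcal{F}_{C,D}(\mu)_{p,q}$, and then converts the atomic frame-measure inequality into the $q$-frame inequality. The only cosmetic difference is that the paper carries out the computation $\sum_k c_k^q|[f,e_{x_k}]_{L^p(\mu)}|^q=\sum_k|[f,c_k e_{x_k}]_{L^p(\mu)}|^q$ inline, whereas you cite Lemma \ref{3.6}(ii), which encapsulates exactly that identity.
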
  
\begin{proof}Let $ \nu \in \mathcal{F}_{A,B}(\mu)_{p, q} $. Then by Theorems \ref{theo 220} and \ref{theo 219},
 $ \nu'=\sum _{k\in\mathbb{Z}^d} c_k^q\delta_{x_k}$ is a $ (p,q) $-frame measure for $ \mu $. More precisely, $ \nu' \in\mathcal{F}_{C,D}(\mu)_{p, q} $. Hence for all $ f \in L^p(\mu) $,
\begin{equation*}
C\| f \|_{L^p(\mu)}^q\leq \int_{\mathbb{R}^d} |[f,e_t]_{L^p(\mu)}|^q d\nu'(t)=\sum_{k\in\mathbb{Z}^d}c_k^q |[f, e_{x_k}]_{L^p(\mu)}|^q= \sum_{k\in\mathbb{Z}^d} |[f,c_k e_{x_k}]_{L^p(\mu)}|^q\leq D\| f \|_{L^p(\mu)}^q.
\end{equation*}
\end{proof}
                                                                                                                                                                     
\section*{Acknowledgements}
The authors would like to thank Dr. Nasser Golestani for his valuable guidance and helpful comments.

\small $^{1}$Department of Mathematics , Science and Research Branch, Islamic Azad University, Tehran, Iran.

\emph{E-mail address}: \small{fz.farhadi61@yahoo.com} \\
 
 $^{2}$Department of Mathematics, Faculty of Science, Islamic Azad University, Central Tehran Branch,Tehran, Iran.
 
\emph{E-mail address}: \small{moh.asgari@iauctb.ac.ir}\\ 

$^{3}$Department of Mathematics , Science and Research Branch, Islamic Azad University, Tehran, Iran.

\emph{E-mail address}: \small{mrmardanbeigi@srbiau.ac.ir}\\ 

$^{4}$Department of Mathematics , Science and Research Branch, Islamic Azad University, Tehran, Iran. 

\emph{E-mail address}: \small{m.azhini@srbiau.ac.ir}

\end{document}